\documentclass[12pt]{amsart}
\usepackage{a4wide,amssymb,amscd,amsmath,mathtools,stmaryrd,url}

\begin{document}

\newcommand{\genG}{\mathcal{G}}                               
\newcommand{\dirac}{\delta}                                   
\newcommand{\comfam}{\mathcal{K}}                             
\newcommand{\fixco}{\K_0^S}                                   
\newcommand{\tor}{\operatorname{tor}}                         
\newcommand{\PB}{polynomially bounded}
\newcommand{\divides}{\,\big|\,}
\newcommand{\OI}{\mathcal{OI}}
\newcommand{\lcm}{\operatorname{lcm}}
\newcommand{\Hecke}{\mathcal{H}}
\newcommand{\types}{\mathcal{F}}
\newcommand{\TWN}{(TWN)}
\newcommand{\BD}{(BD)}
\newcommand{\C}{{\mathbb C}}
\newcommand{\N}{{\mathbb N}}
\newcommand{\R}{{\mathbb R}}
\newcommand{\Z}{{\mathbb Z}}
\newcommand{\Q}{{\mathbb Q}}
\newcommand{\Id}{\operatorname{Id}}
\newcommand{\A}{{\mathbb A}}
\newcommand{\AF}{{\mathcal A}}
\newcommand{\K}{\mathbf{K}}
\newcommand{\plnch}{\operatorname{pl}}
\newcommand{\bs}{\backslash}
\newcommand{\temp}{\operatorname{temp}}
\newcommand{\disc}{\operatorname{disc}}
\newcommand{\cusp}{\operatorname{cusp}}
\newcommand{\spec}{\operatorname{spec}}
\newcommand{\sprod}[2]{\left\langle#1,#2\right\rangle}
\renewcommand{\Im}{\operatorname{Im}}
\renewcommand{\Re}{\operatorname{Re}}
\newcommand{\Ind}{\operatorname{Ind}}
\newcommand{\tr}{\operatorname{tr}}
\newcommand{\Ad}{\operatorname{Ad}}
\newcommand{\Lie}{\operatorname{Lie}}
\newcommand{\Hom}{\operatorname{Hom}}
\newcommand{\Ker}{\operatorname{Ker}}
\newcommand{\vol}{\operatorname{vol}}
\newcommand{\SL}{\operatorname{SL}}
\newcommand{\GL}{\operatorname{GL}}
\newcommand{\card}[1]{\lvert#1\rvert}
\newcommand{\abs}[1]{\lvert{#1}\rvert}
\newcommand{\norm}[1]{\lVert#1\rVert}
\newcommand{\one}{\mathbf 1}
\newcommand{\aaa}{\mathfrak{a}}
\newcommand{\eps}{\epsilon}
\newcommand{\ad}{\operatorname{ad}}
\newcommand{\proj}{\operatorname{proj}}
\newcommand{\rest}{\big|}
\newcommand{\dsum}{\oplus}
\newcommand{\univ}{\mathcal{U}}
\newcommand{\modulus}{\delta}
\newcommand{\AAA}{A}
\renewcommand{\Re}{\operatorname{Re}}
\renewcommand{\Im}{\operatorname{Im}}
\newcommand{\inorm}{\operatorname{N}}
\newcommand{\nnn}{\mathfrak{n}}
\newcommand{\fin}{{\operatorname{fin}}}
\newcommand{\level}{\operatorname{lev}}
\newcommand{\minlevel}{\operatorname{minlev}}
\newcommand{\fctr}[1]{G_{#1}}                
\newcommand{\der}{\operatorname{der}}
\newcommand{\cls}{\mathfrak{o}}
\newcommand{\truncJ}{\tilde J}
\newcommand{\SC}{\operatorname{sc}}
\newcommand{\fctrz}{\mathfrak{F}}
\newcommand{\idl}{\mathfrak{I}}
\newcommand{\intgr}{\mathcal{O}}
\newcommand{\Nm}{\operatorname{Nm}}
\newcommand{\primes}{\mathcal{P}}
\newcommand{\pr}{\operatorname{pr}}
\newcommand{\alfctrs}[1]{\widetilde{#1}}
\newcommand{\cntgr}{\Theta}

\newcommand{\sm}[4]{\left(\begin{smallmatrix}{#1}&{#2}\\{#3}&{#4}\end{smallmatrix}\right)}

\newcommand{\Lieg}{\mathfrak{g}}
\newcommand{\Lieh}{\mathfrak{h}}
\newcommand{\Pro}{\operatorname{Pr}}
\newcommand{\nc}{\operatorname{nc}}
\newcommand{\Res}{\operatorname{Res}}

\newtheorem{theorem}{Theorem}[section]
\newtheorem{lemma}[theorem]{Lemma}
\newtheorem{proposition}[theorem]{Proposition}
\newtheorem{remark}[theorem]{Remark}
\newtheorem{conjecture}[theorem]{Conjecture}
\newtheorem{definition}[theorem]{Definition}
\newtheorem{corollary}[theorem]{Corollary}
\newtheorem{example}[theorem]{Example}
\newtheorem{claim}[theorem]{``Claim''}
\newtheorem{assumption}[theorem]{Assumption}
\newtheorem{wassumption}[theorem]{Working Assumption}

\newenvironment{thmbis}[1]
  {\renewcommand{\thetheorem}{\ref{#1}$'$}%
   \addtocounter{theorem}{-1}%
   \begin{theorem}}
  {\end{theorem}}

\numberwithin{equation}{section}

\title[Limit multiplicity for congruence subgroups]{An approximation principle for congruence subgroups II: application to the limit multiplicity problem}
\author{Tobias Finis}
\address{Universit\"at Leipzig, Mathematisches Institut, Postfach 100920, D-04009 Leipzig, Germany}
\thanks{T.F. partially supported by DFG Heisenberg grant \# FI 1795/1-1.}
\email{finis@math.uni-leipzig.de}
\author{Erez Lapid}
\address{Department of Mathematics, The Weizmann Institute of Science, Rehovot 76100, Israel}
\thanks{E.L. partially supported by grant \#711733 from the Minerva Foundation}
\email{erez.m.lapid@gmail.com}
\date{\today}



\maketitle

\setcounter{tocdepth}{1}
\tableofcontents

\section{Introduction and statement of results}
The limit multiplicity problem in its classical formulation (introduced by DeGeorge--Wallach \cite{MR0492077, MR534759, MR562677})
concerns the asymptotic behavior of the spectra of lattices in semisimple Lie groups.
For a locally compact group $\genG$ (with a fixed choice of a Haar measure) let
$\Pi (\genG)$ be its unitary dual.
Define the discrete spectral measure on $\Pi (\genG)$ associated to a lattice $\Gamma$ in $\genG$ by
\[
\mu_\Gamma=\frac1{\vol(\Gamma\bs \genG)}\sum_{\pi\in\Pi(\genG)} \dim\Hom_{\genG} (\pi, L^2 (\Gamma \bs \genG)) \dirac_\pi,
\]
where $\dirac_\pi$ denotes the Dirac measure at $\pi$.
(The multiplicities in this definition will be finite in the cases of interest to us.)
Let now $\genG$ be more specifically a connected linear semisimple Lie group,
and denote the Plancherel measure of $\genG$ by $\mu_{\plnch}$.
We ask: under which conditions does
a sequence $(\Gamma_n)$ of lattices in $\genG$ with $\vol (\Gamma_n \bs \genG) \to \infty$
satisfy the \emph{limit multiplicity property} $\mu_{\Gamma_n} \to \mu_{\plnch}$?
In this formulation it is natural to impose that $\Gamma_n\cap Z(\genG)=1$
for almost all $n$, where $Z (\genG)$ is the (finite) center of $\genG$.
Slightly more generally, for an arbitrary subgroup $\cntgr$ of $Z (\genG)$
we can ask whether $\mu_{\Gamma_n} \to \mu_{\plnch,\cntgr}$, the Plancherel measure of $\genG / \cntgr$, provided that
$\Gamma_n \cap Z (\genG) = \cntgr$ for almost all $n$.
Here, the convergence is to be understood in the sense that $\mu_{\Gamma_n} (A) \to \mu_{\plnch,\cntgr} (A)$ for suitable subsets $A \subset \Pi (\genG)$ (namely
Jordan measurable subsets of the tempered dual and bounded subsets of the non-tempered dual, cf.
Definition \ref{DefLimitMult} below).

We expect this property to hold in great generality, namely
for arbitrary sequences of irreducible congruence arithmetic lattices (i.e., lattices $\Gamma$ containing a principal congruence subgroup
of a standard arithmetic lattice).
We refer to
\cite[\S 1]{FLM3} for a more extensive introduction to this problem (including references to previous work).
While quite general results have been recently obtained in \cite{ABBGNRS} for the case of \emph{uniform} lattices in higher-rank Lie groups,
the results in the \emph{non-uniform} case are not as complete, especially for general groups $\genG$.
In \cite{FLM3}, the collection of the \emph{principal} congruence subgroups of a fixed standard arithmetic lattice $\Gamma = G (\intgr_F)$
was considered, where $G$ is a reductive group over a number field $F$ and $\intgr_F$ is the ring of integers of $F$,
and the limit multiplicity problem was solved affirmatively for this collection under certain natural conditions on $G$ (called
properties \TWN\ and \BD\ in [ibid.]). The goal of this paper is to extend the results of [ibid.] to the collection of \emph{all} congruence subgroups of such a lattice.
The main new ingredient is a purely group-theoretic estimate from \cite{1308.3604} (see below).
Our proof of this estimate in [ibid.] is based on the approximation principle for congruence subgroups introduced in this paper.
(See \cite[\S5]{ABBGNRS} for a different approach.)

In the following we will work in the adelic setting. See Corollary \ref{CorClassical} below for a restatement of the main result in
classical language.

Throughout let $G$ be a (connected) reductive group defined over a number field $F$ and $S$ a finite set of places of $F$ containing the
set $S_\infty$ of all archimedean places. We write $S_{\fin}=S-S_\infty$. Let $F_S$ be the product over all $v\in S$ of the completions $F_v$, $\A^S$ the
restricted product of the $F_v$ for $v \notin S$, and $\A = F_S\times \A^S$ the ring of adeles of $F$.
As usual, $G(F_S)^1$ denotes the intersection of the kernels of the homomorphisms
$\abs{\chi}_S: G (F_S) \to \R^{>0}$, where $\chi$ ranges over the $F$-rational characters of $G$ and $\abs{\cdot}_S$
denotes the normalized absolute value on $F^\times_S$. The subgroup $G (\A)^1$ of $G(\A)$ is defined analogously
using the adelic absolute value $\abs{\cdot}_{\A}$ on $\A^\times$.
Fix a Haar measure on $G(\A)^1$ and on $G(\A^S)$. This determines a Haar measure on $G(F_S)^1$.

For any open compact subgroup $K$ of $G(\A^S)$ let $\mu_K = \mu^{G,S}_K$ be the measure on
the unitary dual $\Pi(G(F_S)^1)$ of $G(F_S)^1$ given by
\begin{align*}
\mu_K & = \frac{1}{\vol(G(F)\bs G(\A)^1/K)} \sum_{\pi\in\Pi(G(F_S)^1)}\dim\Hom_{G(F_S)^1}(\pi,L^2(G(F)\bs G(\A)^1/K)) \, \dirac_\pi \\
& = \frac{\vol(K)}{\vol(G(F)\bs G(\A)^1)} \sum_{\pi\in\Pi (G(\A)^1)} \dim\Hom_{G(\A)^1}(\pi,L^2(G(F)\bs G(\A)^1)) \, \dim (\pi^S)^K \,  \dirac_{\pi_S}.
\end{align*}

Let $Z=Z_G$ be the center of $G$ and $Z(\intgr_S)$ the intersection of $Z(F)$ with the unique maximal compact subgroup of $Z(\A^S)$.
It is well-known that $Z(\intgr_S)$ is a finitely generated abelian group which is a uniform lattice in $Z(F_S)^1 = Z (F_S) \cap G (F_S)^1$.
For any compact open subgroup $K$ of $G(\A^S)$ let
\[
Z_K= Z(F) \cap K \subset Z(\intgr_S).
\]
We consider the set of subgroups of $Z(\intgr_S)$ as a closed subset of $\{0,1\}^{Z(\intgr_S)}$
(which is a compact topological space when endowed with the product topology).
Since any subgroup of $Z(\intgr_S)$ is finitely generated, we have $Z_n\rightarrow\cntgr$ precisely when $\cntgr\subset Z_n$ for all but finitely
many $n$, while each element of the complement of $\cntgr$ in $Z(\intgr_S)$ belongs to only finitely many of the groups $Z_n$.
Thus, $Z_{K_n}\rightarrow\cntgr$ if and only if
\begin{equation} \label{eq: chabauty}
\sum_{z\in Z(F)}(h\otimes\one_{K_n})(z)\rightarrow\sum_{z\in\cntgr}h(z)
\end{equation}
(in either the discrete topology on $\C$ or the usual one) for any $h\in C_c^\infty(Z(F_S)^1)$.
We also remark that if $Z_{K_n}\rightarrow\cntgr$ then $\mu_{K_n}$ is supported on $\Pi(G(F_S)^1/\cntgr)$ for all but finitely many $n$.

\begin{remark}
Using Chevalley's theorem \cite{MR0044570}, one can show that any subgroup $\cntgr$ of $Z (\intgr_S)$
is a limit of subgroups of the form $Z_ {K_n}$ for suitable compact open subgroups $K_n$ of $G(\A^S)$.
We will not go into details, since we will not use this fact.
\end{remark}

\begin{definition} \label{DefLimitMult}
Suppose that $\comfam$ is a set of compact open subgroups of $G(\A^S)$.
We say that $\comfam$ has the \emph{limit multiplicity property} if for any subgroup $\cntgr$ of $Z(\intgr_S)$ and any subset
$\comfam'\subset\comfam$ such that $Z_K\rightarrow\cntgr$, $K\in\comfam'$, we have
$\mu_K\rightarrow\mu_{\plnch,\cntgr}$, $K\in\comfam'$,
where $\mu_{\plnch,\cntgr}$ is the Plancherel measure of $G(F_S)^1/\cntgr$, in the sense that
\begin{enumerate}
\item for any Jordan measurable subset $A\subset\Pi_{\temp}(G(F_S)^1/\cntgr)$ we have
$\mu_K(A)\rightarrow\mu_{\plnch,\cntgr}(A)$, $K\in\comfam'$, and,
\item for any bounded subset $A\subset\Pi(G(F_S)^1)\setminus\Pi_{\temp}(G(F_S)^1)$ we have
$\mu_K(A)\rightarrow0$, $K\in\comfam'$.
\end{enumerate}
\end{definition}

Here, $\Pi_{\temp}(G(F_S)^1/\cntgr) \subset \Pi (G(F_S)^1/\cntgr)$ denotes the tempered dual of 
$G(F_S)^1/\cntgr$ (the support of the Plancherel measure $\mu_{\plnch,\cntgr}$).  
We can rephrase the first condition by saying that for any Riemann integrable function $f$
on $\Pi_{\temp}(G(F_S)^1/\cntgr)$ we have
\[
\mu_K(f)\rightarrow\mu_{\plnch,\cntgr}(f), \quad K\in\comfam'.
\]
Recall that a Jordan measurable subset $A$ of $\Pi_{\temp} (G(F_S)^1/\cntgr)$ is a bounded set such that $\mu_{\plnch,\cntgr}(\partial A)=0$,
where $\partial A=\bar A-A^\circ$ is the boundary of $A$ in $\Pi_{\temp} (G(F_S)^1/\cntgr)$.
A Riemann integrable function on $\Pi_{\temp} (G(F_S)^1/\cntgr)$ is a bounded, compactly supported function
which is continuous almost everywhere with respect to the Plancherel measure.

If $G$ is $F$-simple and simply connected, then we expect the limit multiplicity property to hold for any collection $\comfam$ of
compact open subgroups of $G (\A^S)$ with $\vol (K) \to 0$, $K \in \comfam$. In the general case it is natural to impose the following condition.
For any reductive group $H$ let
$H(\A)^+$ be the image of the map $H^{\SC}(\A)\rightarrow H(\A)$, where $H^{\SC}$ is the simply connected
cover of the derived group of $H$. Define $H(\A^S)^+$ analogously.

\begin{definition} \label{DefinitionNondeg}
We say that a family $\comfam$ of compact open subgroups of $G (\A^S)$ is \emph{non-degenerate},
if for any $F$-simple normal subgroup $H$ of $G$
we have $\vol_{H(\A^S)^+} (K\cap H(\A^S)^+) \to 0$, $K \in \comfam$.
\end{definition}

In this paper we will only treat the case where the family $\comfam$ consists of open subgroups of a fixed open compact subgroup $\fixco$ of $G (\A^S)$.
In this case, non-degeneracy simply amounts to the condition that for any $F$-simple normal subgroup $H$ of $G$ the
map $K\in\comfam\mapsto K\cap H(\A^S)^+$ is finite-to-one. When $G$ itself is $F$-simple and simply connected, we may take
$\comfam$ to be the collection of all open subgroups of $\fixco$.

Recall that in \cite[\S 5]{FLM3}, 
two natural properties of the reductive group $G$ (called
properties \TWN\ and \BD) were introduced and studied.
They concern the behavior of the intertwining operators associated to proper parabolic subgroups of $G$,
and are therefore trivially satisfied if $G$ is anisotropic modulo the center.
By [ibid., Proposition 5.5, Theorem 5.15], they are known to hold if $G$ is either $\GL(n)$ or $\SL(n)$.
In a future paper we will establish them in many additional cases.

Our main result here is the following.
\begin{theorem} \label{MainTheorem}
Suppose that $G$ satisfies \TWN\ and \BD\ and let $\fixco$ be a compact open subgroup of $G (\A^S)$.
Then limit multiplicity holds for any non-degenerate family $\comfam$ of open subgroups of $\fixco$.
\end{theorem}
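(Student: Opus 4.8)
\emph{Proof strategy.} The plan is to follow the method of \cite{FLM3}, feeding in the group-theoretic estimate of \cite{1308.3604} (recalled below) wherever \cite{FLM3} relied on the explicit structure of principal congruence subgroups, which for a general open subgroup $K\subseteq\fixco$ is no longer available. I would fix a subgroup $\cntgr\subseteq Z(\intgr_S)$ and a subfamily $\comfam'\subseteq\comfam$ with $Z_K\to\cntgr$, $K\in\comfam'$, and first reduce --- by Sauvageot's density principle in the form used in \cite{FLM3}, together with the spectral positivity of the $\mu_K$ --- both assertions of Definition \ref{DefLimitMult} to the single statement
\begin{equation}\label{eq:reduction}
\mu_K(\hat f)\longrightarrow\mu_{\plnch,\cntgr}(\hat f),\qquad K\in\comfam',
\end{equation}
for every $f$ in a suitable subalgebra of $C_c^\infty(G(F_S)^1)$, where $\hat f(\pi)=\tr\pi(f)$. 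Approximating the indicator of a Jordan measurable subset of $\Pi_{\temp}(G(F_S)^1/\cntgr)$ by functions $\hat f$ that are dominated on all of $\Pi(G(F_S)^1)$ by functions of small Plancherel mass yields assertion (1); and for a bounded $A\subseteq\Pi(G(F_S)^1)\setminus\Pi_{\temp}(G(F_S)^1)$, taking $f\ge0$ with $\hat f\ge\one_A$ and writing $\mu_K(A)\le\mu_K(\hat f)-\mu_K(\hat f\cdot\one_{\Pi_{\temp}})$ reduces assertion (2) to \eqref{eq:reduction} and (1).

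To prove \eqref{eq:reduction} I would apply Arthur's non-invariant trace formula $J_{\mathrm{spec}}(f\otimes\one_K)=J_{\mathrm{geom}}(f\otimes\one_K)$ to the test function $f\otimes\one_K$, where $\one_K$ is the characteristic function of $K$; here $\vol(G(F)\bs G(\A)^1)\,\mu_K(\hat f)$ equals the discrete part $J_{\disc}(f\otimes\one_K)$ of the spectral side. The first task is to show that the contribution of the continuous spectrum is negligible, i.e.\ that $J_{\mathrm{spec}}(f\otimes\one_K)-J_{\disc}(f\otimes\one_K)\to0$; this is a sum over the proper Levi subgroups of $G$ of an integral involving the logarithmic derivatives of the normalized intertwining operators, to be estimated exactly as in \cite{FLM3} by means of properties \TWN\ and \BD. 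The one new point is to verify that those estimates, stated in \cite{FLM3} for principal congruence subgroups, carry over to an arbitrary open $K\subseteq\fixco$; this rests on the approximation principle of \cite{1308.3604}, which bounds the conductors of the relevant automorphic representations polynomially in $[\fixco:K]$. It then remains to analyse $\vol(G(F)\bs G(\A)^1)^{-1}J_{\mathrm{geom}}(f\otimes\one_K)$.

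On the geometric side, since $\operatorname{supp}f\times\fixco$ is compact and $G(F)$ is discrete in $G(\A)$, only finitely many conjugacy classes in $G(F)$ contribute, and this finite set is independent of $K\in\comfam'$. The central classes together contribute $\vol(G(F)\bs G(\A)^1)\sum_{z\in Z_K}f(z)$, so after dividing by the volume they give $\sum_{z\in Z_K}f(z)\to\sum_{z\in\cntgr}f(z)=\mu_{\plnch,\cntgr}(\hat f)$, by the Plancherel formula on $G(F_S)^1/\cntgr$ and the hypothesis $Z_K\to\cntgr$. For a non-central class, Arthur's splitting and descent formulas express its contribution as a finite sum of products of a factor at the places in $S$, bounded uniformly in $K$ for fixed $f$, and a factor at the remaining places of the form $\int\one_K(x^{-1}\gamma x)\,v(x)\,dx$ over an appropriate quotient, with $\gamma$ a fixed non-central semisimple element and $v$ a weight of at most logarithmic growth. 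Here the estimate of \cite{1308.3604} applies: it bounds the number of cosets $xK\subseteq\fixco$ with $x^{-1}\gamma x\in K$ by $O\bigl([\fixco:K]^{1-\delta}\bigr)$ for some $\delta>0$ depending only on $G$, $\fixco$ and a compact set containing $\gamma$; since $\vol(K)=\vol(\fixco)[\fixco:K]^{-1}$, the displayed integral is $O\bigl([\fixco:K]^{-\delta}(\log[\fixco:K])^{A}\bigr)$ and hence tends to $0$. Non-degeneracy of $\comfam$ (Definition \ref{DefinitionNondeg}) enters precisely here: decomposing $\gamma$ along the $F$-simple factors of $G$, it guarantees that the index occurring in the estimate does tend to infinity. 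Combining the central and non-central contributions gives \eqref{eq:reduction}.

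I expect the main obstacle to be exactly this uniformity over all open subgroups of $\fixco$, which in \cite{FLM3} was circumvented by exploiting the explicit combinatorics of the congruence subgroups $K(\nnn)$: both the geometric input (on the conjugacy classes meeting $\fixco\times\operatorname{supp}f$ modulo $K$) and the spectral input (on conductors, needed to run the \TWN/\BD\ estimates for general $K$) must be extracted from the approximation principle of \cite{1308.3604}. Once these are in hand, the remainder should be a routine transcription of \cite{FLM3}.
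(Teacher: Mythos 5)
Your overall skeleton -- Sauvageot's density principle to reduce limit multiplicity to convergence of $\mu_K(\hat h)$, then Arthur's non-invariant trace formula split into a geometric and a spectral limit property, with the estimates of \cite{1308.3604} as the new group-theoretic input -- is exactly the paper's architecture. But both halves, as you sketch them, have genuine gaps. On the geometric side, your claim that the non-central contribution is a finite sum of terms of the form $\int \one_K(x^{-1}\gamma x)\,v(x)\,dx$ with a \emph{fixed} non-central semisimple $\gamma$ and a weight of logarithmic growth is not what the non-invariant trace formula gives: the problematic classes are precisely those whose semisimple part is central (or close to it) but which carry nontrivial unipotent parts -- above all the unipotent contribution itself -- and their terms are not orbital integrals of a single fixed element; they involve the truncation and sums over infinitely many rational points. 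The paper avoids the fine geometric expansion altogether and works with the truncated kernel: Arthur's characterization of $J^T$ as the polynomial approximating $\truncJ^T$ (Theorem \ref{thm: Arthur3.1}, extended in Lemma \ref{lem: Art3.1gen} to a decomposition indexed by the normal subgroups $H\in\fctrz(G)$), and then the key estimate (Lemma \ref{truncatedlemma}) for $\truncJ^T_{\alfctrs{H}(F)}(h_S\otimes\one_K)$, obtained by playing the volume bound of \cite{1308.3604} at the primes where $\lambda^H_p$ is small against a count of unipotent rational points $u\in U(F)$ in expanding regions subject to the divisibility constraint on $\Lambda^H(u)$ (Lemma \ref{lem: latpntcnt}, Corollary \ref{cor: ltcount}). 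Nothing in your sketch performs this interplay, and it is also the place where non-degeneracy actually enters, through $\level(K;H^+)$ for each normal subgroup $H$, not merely through "decomposing $\gamma$ along the simple factors."

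On the spectral side, the assertion that the \cite{FLM3} estimates "carry over" because \cite{1308.3604} "bounds the conductors of the relevant automorphic representations polynomially in $[\fixco:K]$" misidentifies the missing ingredient; no conductor bound of this kind is used or available. After the \TWN/\BD\ estimate (Proposition \ref{prop: mainestimate}) one is left, for each proper Levi $M$, with the quantity $\vol(K)\sum_{\pi}(1+\abs{\lambda_{\pi_\infty}})^{-N}\dim\AF^2_\pi(P)^{K,\tau}$, and the new difficulty is to bound it uniformly over arbitrary open $K\subset\fixco$. The paper does this by induction on Levi subgroups: polynomial boundedness of the collections $\mu^{M,S_\infty}_{K_M}$ with $K_M=\proj_M(P(\A_\fin)\cap\gamma K\gamma^{-1})$ (Proposition \ref{BoundedLemma}), whose proof in turn uses the geometric limit property for $M$; this is then converted, via Lemma \ref{LemmaRichardson}, into the Richardson orbit integral $\OI_{P,K}$, and only at that point does \cite{1308.3604} enter, through the bound $\OI_{P,K}\ll\level(K;\fctr M^+)^{-\delta}$ (Lemma \ref{lem: mainspecorbitbnd}). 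Without this inductive step your spectral argument does not close: for a general open subgroup $K$ the dimensions of $K$-invariants in the induced spaces are not controlled by the principal-congruence-subgroup computations of \cite{FLM3}.
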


As a direct consequence we obtain the following result on the spectra of arithmetic lattices.

\begin{corollary} \label{CorClassical}
Suppose that $G$ is a simply connected $F$-simple group satisfying \TWN\ and \BD\ and that $G (F_S)$ is not compact.
Let $\fixco$ be a compact open subgroup of $G (\A^S)$.
Then limit multiplicity holds for the lattices $\Gamma_K = G (F) \cap K \subset G (F_S)$, where
$K$ ranges over the open subgroups of $\fixco$, i.e., for any subgroup $\cntgr$ of the finite group $Z (\intgr_S) \subset G (F_S)$
and any collection $\comfam$ of open subgroups of $\fixco$ such that $Z_K = \Gamma_K \cap Z (\intgr_S)=\cntgr$
for all but finitely many $K \in \comfam$,
we have $\mu_{\Gamma_K} \to \mu_{\plnch,\cntgr}$ (in the sense of Definition \ref{DefLimitMult}).
\end{corollary}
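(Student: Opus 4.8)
The plan is to deduce Corollary~\ref{CorClassical} from Theorem~\ref{MainTheorem} via the usual dictionary between the adelic and classical descriptions of the automorphic spectrum, the link being strong approximation. First I would record the simplifications coming from the hypotheses on $G$. Being $F$-simple and simply connected, $G$ is semisimple, hence has no nontrivial $F$-rational characters; therefore $G(F_S)^1 = G(F_S)$, $G(\A)^1 = G(\A) = G(F_S)\times G(\A^S)$, and the Haar measure on $G(F_S)^1$ fixed in the adelic setup is simply a Haar measure on $G(F_S)$. Likewise $Z = Z_G$ is finite, so $Z(\intgr_S) = Z(F)$ is a finite group, as asserted in the corollary. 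Finally, since $G(F_S) = \prod_{v\in S} G(F_v)$ is noncompact and $G$ is $F$-simple and simply connected, strong approximation shows that $G(F)$ is dense in $G(\A^S)$.

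Next I would set up the identification. Fix an open subgroup $K$ of $\fixco$. Density of $G(F)$ in $G(\A^S)$ gives $G(\A^S) = G(F)K$ (every coset $xK$ is open, hence meets $G(F)$), and therefore $G(\A) = G(F)\,(G(F_S)\times K)$. A short computation with fundamental domains — if $\Omega$ is a fundamental domain for $\Gamma_K$ acting on $G(F_S)$, then $\Omega\times K$ is one for $G(F)$ acting on $G(\A)$ — yields a measure-preserving, $G(F_S)$-equivariant homeomorphism
\[
G(F)\bs G(\A)/K \;\cong\; \Gamma_K\bs G(F_S),
\]
with $\vol(\Gamma_K\bs G(F_S)) = \vol(G(F)\bs G(\A)^1)/\vol(K) = \vol(G(F)\bs G(\A)^1/K)$. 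Hence $L^2(G(F)\bs G(\A)^1/K)\cong L^2(\Gamma_K\bs G(F_S))$ as $G(F_S)$-modules, and under the canonical identification $\Pi(G(F_S)^1) = \Pi(G(F_S))$ we get $\mu_{\Gamma_K} = \mu_K$ for every such $K$; moreover the measure $\mu_{\plnch,\cntgr}$ of Theorem~\ref{MainTheorem} is literally the Plancherel measure of $G(F_S)/\cntgr$ appearing in the corollary.

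It then remains to verify the hypotheses of Theorem~\ref{MainTheorem} and to match up the conditions on the central parts. Non-degeneracy of any family $\comfam$ of open subgroups of $\fixco$ is automatic here: the only $F$-simple normal subgroup of $G$ is $G$ itself, and, $G$ being simply connected, $G(\A^S)^+ = G(\A^S)$, so the map $K\in\comfam\mapsto K\cap G(\A^S)^+ = K$ is injective, a fortiori finite-to-one. As for the center, $Z_K = \Gamma_K\cap Z(\intgr_S) = Z(F)\cap K$; since $Z(\intgr_S)$ is finite, a family of its subgroups converges to $\cntgr$ in the sense of \eqref{eq: chabauty} exactly when it equals $\cntgr$ for all but finitely many indices. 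Thus the hypothesis of the corollary says precisely that $Z_K\to\cntgr$, $K\in\comfam$ (and then the same holds for every subfamily $\comfam'\subset\comfam$). Applying Theorem~\ref{MainTheorem} to the non-degenerate family $\comfam$ and taking $\comfam' = \comfam$, we obtain $\mu_K\to\mu_{\plnch,\cntgr}$, $K\in\comfam$, in the sense of Definition~\ref{DefLimitMult}; transporting this through $\mu_{\Gamma_K} = \mu_K$ gives the assertion.

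The argument is essentially a translation, so there is no serious obstacle; the only points that need care are checking that the identification $G(F)\bs G(\A)/K\cong\Gamma_K\bs G(F_S)$ respects the normalization of Haar measures fixed in the adelic setup — so that $\mu_{\Gamma_K}$ equals $\mu_K$ on the nose, not merely up to a constant — and confirming that noncompactness of $G(F_S)$ is exactly the hypothesis required to invoke strong approximation in the form used above.
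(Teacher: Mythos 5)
Your proposal is correct and follows the same route as the paper: the paper deduces the corollary directly from Theorem \ref{MainTheorem} via strong approximation (\cite[Theorem 7.12]{MR1278263}), which gives the canonical identification $G(F)\bs G(\A)/K \cong \Gamma_K\bs G(F_S)$ and hence $\mu_{\Gamma_K}=\mu_K$, with the non-degeneracy and central conditions verified exactly as you do. Your write-up merely makes explicit the measure normalization and the trivial non-degeneracy check that the paper leaves implicit.
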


The corollary follows directly from the strong approximation theorem \cite[Theorem 7.12]{MR1278263}, 
which asserts that (under the above conditions on $G$) 
the $G(F_S)$-spaces $G (F) \bs G (\A) / K$ and $\Gamma_K \bs G (F_S)$ 
are canonically isomorphic for any open subgroup $K$ of $G(\A^S)$.
The finite index subgroups $\Gamma_K = G(F) \cap K$ of the arithmetic lattice
$\Gamma_0 = G (F) \cap \fixco$ are called the \emph{congruence subgroups} of $\Gamma_0$.
Since the groups $\SL(n)$ are known to satisfy properties \TWN\ and \BD, we have in particular the
following result.

\begin{corollary}
For any number field $F$ and any finite set $S \supset S_\infty$,
limit multiplicity (in the sense of Definition \ref{DefLimitMult}) holds for the family of
all congruence subgroups $\Gamma_K$ of the lattice $\Gamma_0 = \SL (n, \intgr_S)$ in the group $\SL (n, F_S)$.
\end{corollary}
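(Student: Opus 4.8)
The plan is to read this off from Corollary \ref{CorClassical} by specializing $G$ to $\SL(n)$. I would take $G=\SL(n)$, regarded as a linear algebraic group over the number field $F$ (and assume $n\ge 2$, the case $n=1$ being vacuous), and then verify one by one the hypotheses of that corollary: (i) $G$ is simply connected; (ii) $G$ is $F$-simple; (iii) $G(F_S)$ is non-compact; and (iv)--(v) $G$ satisfies \TWN\ and \BD.

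Hypotheses (i) and (ii) are standard structure theory: $\SL(n)$ is semisimple and simply connected, equal to its own derived group, and it is absolutely almost simple, hence in particular $F$-simple. For (iii): since $S\supset S_\infty$ is non-empty and $\SL(n,F_v)$ is non-compact for every place $v$ when $n\ge 2$, the group $\SL(n,F_S)=\prod_{v\in S}\SL(n,F_v)$ is non-compact. For (iv) and (v) I would simply invoke the results already quoted above from \cite{FLM3}, namely [ibid., Proposition 5.5 and Theorem 5.15], which establish properties \TWN\ and \BD\ for the groups $\SL(n)$ (and $\GL(n)$).

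With the hypotheses of Corollary \ref{CorClassical} in hand, I would take $\fixco=\prod_{v\notin S}\SL(n,\intgr_v)$, the standard maximal compact subgroup of $G(\A^S)$. Then $\Gamma_0=G(F)\cap\fixco$ is exactly $\SL(n,\intgr_S)$, and the congruence subgroups of $\Gamma_0$ are precisely the groups $\Gamma_K=G(F)\cap K$ as $K$ ranges over the open subgroups of $\fixco$: a subgroup of $\SL(n,\intgr_S)$ is a congruence subgroup iff it contains some principal congruence subgroup $\Gamma(\nnn)=G(F)\cap K(\nnn)$, and $K(\nnn)$ is open in $\fixco$. Corollary \ref{CorClassical} then yields limit multiplicity, in the sense of Definition \ref{DefLimitMult}, for this family. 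I do not expect a real obstacle at this step: everything of substance --- Theorem \ref{MainTheorem}, the strong approximation input \cite[Theorem 7.12]{MR1278263}, the automatic non-degeneracy of an arbitrary family of open subgroups of $\fixco$ when $G$ is simply connected and $F$-simple, and the verification of \TWN\ and \BD\ for $\SL(n)$ in \cite{FLM3} --- has been done already, so that the corollary is a pure specialization.
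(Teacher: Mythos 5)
Your proposal is correct and matches the paper's own (implicit) argument: the corollary is obtained exactly by specializing Corollary \ref{CorClassical} to $G=\SL(n)$ with $\fixco=\prod_{v\notin S}\SL(n,\intgr_v)$, the hypotheses (simply connected, $F$-simple, $G(F_S)$ non-compact) being standard and \TWN\ and \BD\ being supplied by \cite[Proposition 5.5, Theorem 5.15]{FLM3}. Your extra remark identifying the subgroups $\Gamma_K$ with the subgroups containing a principal congruence subgroup is consistent with the paper's definition and adds nothing problematic.
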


It is known from the work of Raghunathan \cite{MR0507030,MR842049}, that any isotropic
$F$-simple simply connected group $G$
for which the sum over $v \in S$ of the $F_v$-ranks of $G$ is at least two,
has the \emph{congruence subgroup property}. This means that every finite index subgroup of $\Gamma_0$
is contained in a congruence subgroup whose index is bounded in terms of $G$ only.
In fact, in many cases every finite index subgroup is a congruence subgroup,
for instance if either $S_{\fin}\ne\emptyset$ or if $G$ is split and $F$ is not totally complex.
In such a situation, Corollary \ref{CorClassical} becomes
a statement on the collection of \emph{all} finite index subgroups of the lattice $\Gamma_0$. We will comment on possible extensions of Theorem \ref{MainTheorem} and
Corollary \ref{CorClassical} at the end of \S \ref{SectionStrategy} below.

We thank the Hausdorff Research Institute for Mathematics, Bonn, where a part of this paper was worked out.
We thank Nicolas Bergeron, Tsachik Gelander, Peter Sarnak and Andreas Thom
for useful discussions and interest in the subject matter of this paper.

\section{The proof strategy} \label{SectionStrategy}
We now explain the strategy of the proof of Theorem \ref{MainTheorem}.
Using (a slight extension of) a result of Sauvageot \cite{MR1468833}, we can interpret the limit multiplicity property
in terms of the trace formula as follows.
Fix a maximal compact subgroup $\K_S$ of $G (F_S)$ and
let $\Hecke(G(F_S)^1)$ be the algebra of smooth, compactly supported, bi-$\K_S$-finite functions on $G(F_S)^1$.
For any $h\in \Hecke (G(F_S)^1)$ let $\hat h$ be the function on $\Pi(G(F_S)^1)$ given by $\hat h(\pi)=\tr\pi(h)$.
Denote by $R_{\disc}$ the regular representation of $G(\A)^1$ on the discrete part of $L^2(G(F)\bs G(\A)^1)$.
Note that we have
\[
\mu_K (\hat{h}) = \frac{1}{\vol(G(F)\bs G(\A)^1)} \tr R_{\disc}(h\otimes\one_{K})
\]
and
\[
\mu_{\plnch,\cntgr} (\hat{h}) = \sum_{z\in\cntgr}h (z).
\]
Then we have the following reduction.

\begin{theorem} \label{thm: reduction}
Suppose that a collection $\comfam$ of compact open subgroups of $G (\A^S)$
has the property that for any function $h\in\Hecke(G(F_S)^1)$ we have
\begin{equation} \label{eq: main1}
\mu_K (\hat{h})-\sum_{z\in Z(F) \cap K}h(z)\rightarrow0, \quad K \in \comfam.
\end{equation}
Then limit multiplicity holds for $\comfam$.
\end{theorem}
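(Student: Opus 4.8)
The plan is to reduce the limit multiplicity property, as formulated in Definition \ref{DefLimitMult}, to the hypothesis \eqref{eq: main1} via the spectral interpretation of the trace formula together with a density/approximation argument in the spirit of Sauvageot \cite{MR1468833}. First I would fix a subgroup $\cntgr$ of $Z(\intgr_S)$ and a subfamily $\comfam' \subset \comfam$ with $Z_K \to \cntgr$, $K \in \comfam'$. As remarked in the text, $\mu_K$ is then supported on $\Pi(G(F_S)^1/\cntgr)$ for all but finitely many $K \in \comfam'$, so without loss of generality we may work on the group $G(F_S)^1/\cntgr$ throughout; equivalently we restrict attention to test functions $h \in \Hecke(G(F_S)^1)$ that transform under $\cntgr$ by the trivial character, for which $\sum_{z \in Z(F) \cap K} h(z) \to \sum_{z \in \cntgr} h(z) = \mu_{\plnch,\cntgr}(\hat h)$ by \eqref{eq: chabauty}. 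Hence \eqref{eq: main1} says precisely that $\mu_K(\hat h) \to \mu_{\plnch,\cntgr}(\hat h)$ for every such $h$, i.e. the spectral measures $\mu_K$ converge to $\mu_{\plnch,\cntgr}$ when tested against the dense subspace of functions of the form $\hat h$.

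The second step is to upgrade this weak convergence on a dense subspace of test functions to convergence on the classes of sets appearing in Definition \ref{DefLimitMult}. For the tempered part this is the content of the (slight extension of the) theorem of Sauvageot: given a Jordan measurable $A \subset \Pi_{\temp}(G(F_S)^1/\cntgr)$ and $\eps > 0$, one produces $h_1, h_2 \in \Hecke(G(F_S)^1)$ (transforming trivially under $\cntgr$) with $\hat h_1 \le \one_A \le \hat h_2$ on the tempered dual and $\mu_{\plnch,\cntgr}(\hat h_2 - \hat h_1) < \eps$; here one uses that $\mu_{\plnch,\cntgr}(\partial A) = 0$ to do the sandwiching. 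A subtlety is that $\hat h_i$ need not be supported on the tempered dual, so one must also control the contribution of $\hat h_i$ on the non-tempered part: since $\mu_K$ is a positive measure and $h_i$ can be chosen so that $\hat h_i$ is small (not merely on the tempered dual but everywhere, or at least bounded with controlled non-tempered support), combining with the non-tempered estimate below shows $\limsup_K \mu_K(A) \le \mu_{\plnch,\cntgr}(A) + O(\eps)$ and symmetrically for $\liminf$, giving condition (1).

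The third step handles the non-tempered part, condition (2): for a bounded $A \subset \Pi(G(F_S)^1) \setminus \Pi_{\temp}(G(F_S)^1)$ we need $\mu_K(A) \to 0$. Again by a Sauvageot-type approximation one chooses $h \in \Hecke(G(F_S)^1)$ with $\hat h \ge 0$ everywhere, $\hat h \ge 1$ on $A$, and $\mu_{\plnch,\cntgr}(\hat h) < \eps$ (possible because $A$ is disjoint from the support of the Plancherel measure, which is the tempered dual, so $\one_A$ is majorized by nonnegative functions of arbitrarily small Plancherel mass that are still of the form $\hat h$); then $0 \le \mu_K(A) \le \mu_K(\hat h) \to \mu_{\plnch,\cntgr}(\hat h) < \eps$ by Step 1, and letting $\eps \to 0$ gives the claim. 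I expect the main obstacle to be the analytic input underlying Steps 2 and 3, namely establishing the required version of Sauvageot's density principle in sufficient generality — in particular dealing with the bi-$\K_S$-finiteness constraint on $\Hecke(G(F_S)^1)$, the disconnectedness of $G(F_S)^1$, and the central character bookkeeping for $\cntgr$ — so that the majorizing and minorizing functions $h_i$ with the stated properties genuinely exist; the passage from \eqref{eq: main1} to weak convergence (Step 1) and the elementary positivity arguments are then routine.
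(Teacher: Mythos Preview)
Your outline is essentially the paper's approach: deduce $\mu_K(\hat h)\to\mu_{\plnch,\cntgr}(\hat h)$ from \eqref{eq: main1} and \eqref{eq: chabauty}, then upgrade via a Sauvageot-type density statement. Two points of comparison are worth noting.

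First, a small correction: you cannot ``restrict to $h\in\Hecke(G(F_S)^1)$ that transform trivially under $\cntgr$'' when $\cntgr$ is infinite, since compact support then forces $h=0$. The paper keeps arbitrary $h\in\Hecke(G(F_S)^1)$ and uses that $\sum_{z\in\cntgr}h(z)=\mu_{\plnch,\cntgr}(\hat h)$ is a finite sum; the inequality $\lvert f-\hat h_1\rvert\le\hat h_2$ is required to hold on \emph{all} of $\Pi(G(F_S)^1/\cntgr)$ (not just the tempered part), which also cleanly sidesteps the non-tempered leakage issue you flag in Step~2.

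Second, the obstacle you identify --- the central-character bookkeeping for general $\cntgr$ --- is exactly what the paper works out. It reduces the Sauvageot statement for $\cntgr$ first to the finite case $\cntgr_{\tor}$ (using that $\hat h\ge0$ implies $\lvert h(z)\rvert\le h(1)$ on the center), and then handles the free part $\overline{\cntgr}=\cntgr/\cntgr_{\tor}$ by choosing an almost-everywhere continuous section $s\colon X(\overline{\cntgr})\to X(G(F_S)^1)$ and averaging the functions $H_i$ coming from the $\cntgr_{\tor}$-case by $\lvert Y\rvert^{-1}\sum_{\chi\in Y}s(\chi)$ for a suitable finite subgroup $Y\subset X(\overline{\cntgr})$, chosen so as to annihilate the (finite) support of $H_2$ on $\cntgr\setminus\cntgr_{\tor}$. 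This is the one genuinely new ingredient beyond \cite{MR1468833} and \cite{FLM3}, and it is what your proposal leaves as an expected difficulty.
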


\begin{proof}
Using \eqref{eq: chabauty}, for any subgroup $\cntgr \subset Z (\intgr_S)$ and any collection
$\comfam' \subset \comfam$ with $Z_K \to \cntgr$, $K \in \comfam'$, we have
$\mu_K (\hat{h}) \to \mu_{\plnch,\cntgr} (\hat{h}) = \sum_{z\in\cntgr}h (z)$, $K \in \comfam'$.
We need to show that this implies the limit multiplicity property in the sense of Definition \ref{DefLimitMult}, i.e.,
the convergence $\mu_K (A) \to \mu_{\plnch,\cntgr} (A)$, $K \in \comfam'$, for suitable sets $A \subset \Pi (G (F_S)^1)$.

Fix a subgroup $\cntgr$ as above.
Arguing as in \cite[\S 2]{FLM3}, the assertion follows from the following variant of Sauvageot's results.

Let $\epsilon>0$.
\begin{enumerate}
\item For any bounded set $A\subset\Pi(G(F_S)^1)\setminus\Pi_{\temp}(G(F_S)^1)$
there exists $h\in\Hecke(G(F_S)^1)$ such that
\begin{enumerate}
\item $\hat h(\pi)\ge0$ for all $\pi\in\Pi(G(F_S)^1)$,
\item $\hat h(\pi)\ge1$ for all $\pi\in A$,
\item $\mu_{\plnch,\cntgr}(\hat h)<\epsilon$.
\end{enumerate}
\item For any Riemann integrable function $f$ on $\Pi_{\temp}(G(F_S)^1/\cntgr)$
there exist $h_1,h_2\in\Hecke(G(F_S)^1)$ such that
\begin{enumerate}
\item $\abs{f(\pi)-\hat h_1(\pi)}\le\hat h_2(\pi)$ for all $\pi\in\Pi(G(F_S)^1/\cntgr)$,
where we extend $f$ by zero to $\Pi (G(F_S)^1/\cntgr)$.
\item $\mu_{\plnch,\cntgr}(\hat{h}_2) <\epsilon$.
\end{enumerate}
\end{enumerate}

The case $\cntgr=1$ is \cite[Theorem 2.1]{FLM3}, due to Sauvageot.
The case where $\cntgr$ is finite easily follows from the fact that if $\hat h (\pi) \ge0$
for all $\pi \in \Pi (G (F_S)^1)$
then $\abs{h(z)}\le h(1)$ for all $z\in Z(F_S)^1$, since
$h(z)=\int_{\Pi_{\temp}(G(F_S)^1)}\hat h(\pi)\omega_\pi(z^{-1}) \, d \mu_{\plnch}$,
where $\omega_\pi$ denotes the central character of $\pi$.

Consider the case of a general subgroup $\cntgr \subset Z (\intgr_S)$.
Let $\cntgr_{\tor}$ be the torsion part of $\cntgr$ and $\overline{\cntgr}=\cntgr/\cntgr_{\tor}$, which is free abelian of finite rank.
Let $X(\genG)$ be the group of all unitary characters of a locally compact group $\genG$ and
consider the restriction map $r: X (G(F_S)^1) \to X (\cntgr)$. 
Since its cokernel is finite, its image 
contains the divisible subgroup $X(\overline{\cntgr}) \subset X (\cntgr)$.
Let $s:X (\overline{\cntgr})\rightarrow X (G(F_S)^1)$ be a (set-theoretic) cross section for $r$ which is almost everywhere continuous.

We start with the proof of the second assertion.
Given a Riemann integrable function $f$ on $\Pi_{\temp}(G(F_S)^1 /\cntgr)$, we define a
Riemann integrable function $F$ on $\Pi_{\temp}(G(F_S)^1 /\cntgr_{\tor})$ by setting
$F(\pi)=f(\pi\otimes s(\omega_\pi|_{\cntgr})^{-1})$.
By the previous case there exist $H_1,H_2 \in\Hecke(G(F_S)^1)$ satisfying the second assertion with $f$ replaced by $F$ and
$\cntgr$ replaced by $\cntgr_{\tor}$.
Let $Y$ be a finite subgroup of $X (\overline{\cntgr})$. Then
the averages $h_j = (\abs{Y}^{-1} \sum_{\chi\in Y} s(\chi)) H_j$ satisfy $\abs{f(\pi)-\hat h_1(\pi)}\le\hat h_2(\pi)$
for all $\pi\in\Pi(G(F_S)^1/\cntgr)$.
Since the support of $H_2$ on $\cntgr$ is finite, we can choose $Y$ such that
$h_2 (z) = ( \abs{Y}^{-1} \sum_{\chi\in Y} \chi (z)) H_2 (z) = 0$ for all $z \in \cntgr$, $z \notin \cntgr_{\tor}$.
Therefore $\mu_{\plnch,\cntgr} (\hat h_2)= \mu_{\plnch,\cntgr_{\tor}} (\hat H_2) < \epsilon$, as desired.

For the first assertion, we again apply the result in the finite group case to obtain a function $H\in\Hecke(G(F_S)^1)$ satisfying the first assertion with
$A$ replaced by $\{\pi\otimes s(\chi):\pi\in A, \chi\in X (\overline{\cntgr})\}$ and $\cntgr$ replaced by $\cntgr_{\tor}$.
Taking $h=(\abs{Y}^{-1} \sum_{\chi\in Y} s(\chi)) H$ as before will yield the result.
\end{proof}

As in \cite{FLM3} we will use Arthur's non-invariant trace formula to attack \eqref{eq: main1}.
Recall that Arthur has defined a certain distribution $h\mapsto J(h)$ on $C_c^\infty(G(\A)^1)$ and expanded it geometrically and spectrally
\cite{MR518111, MR558260, MR681737, MR681738, MR828844, MR835041} (cf. \S\ref{SectionGeom} for more details).
The distribution $J$ depends on the choice of a maximal $F$-split torus of $G$ and
a suitable maximal compact subgroup $\K$ of $G(\A)$.
The main terms on the geometric side are the elliptic orbital integrals, most notably the contribution
$J_{Z(F)}(h)=\vol(G(F)\bs G(\A)^1)\sum_{z\in Z(F)}h(z)$ of the central elements.
The main term on the spectral side is $\tr R_{\disc}(h)$.

In order to prove the relation \eqref{eq: main1} we will consider the following two statements (which together clearly imply it):
\begin{equation} \label{eq: mainspectral}
\text{For any }h\in\Hecke(G(F_S)^1)\text{ we have } J (h \otimes \one_K) - \tr R_{\disc} (h \otimes \one_K) \rightarrow0,\ K\in\comfam,
\end{equation}
and,
\begin{equation} \label{eq: maingeometric}
\text{for any }h\in\Hecke(G(F_S)^1)\text{ we have } J (h \otimes \one_K) -J_{Z(F)} (h \otimes \one_K)\rightarrow 0,\ K\in\comfam.
\end{equation}
Following \cite{FLM3}, we call these relations the \emph{spectral} and \emph{geometric limit properties}, respectively.

In \S\ref{SectionGeom} we will prove the geometric limit property for any non-degenerate family $\comfam$
of open subgroups of $\fixco$ (see Theorem \ref{thm: maingeom} below).
Under assumptions \TWN\ and \BD\ on $G$, we will then prove in \S\ref{sec: spectral limit property}
the spectral limit property for any non-degenerate family $\comfam$ of open subgroups of $\fixco$
(see Corollary \ref{corspectrallimit}).
It is a technical feature of the proof that we first prove for each proper Levi subgroup
$M$ of $G$ a different statement (polynomial
boundedness) on the measures $\mu^{M,S_\infty}_{K_M}$, where 
$K_M$ varies over the open subgroups of an arbitrary compact open subgroup
of $M (\A_{\fin})$. The proof of this auxiliary statement proceeds by induction on the semisimple rank of the Levi subgroup $M$ and 
uses Theorem \ref{thm: maingeom} (for $M$ instead of $G$). 

Both the geometric and spectral limit properties are proved in a quantitative form, i.e., we obtain estimates of the
form $O_{h} (N^{-\delta})$ for the left-hand sides of \eqref{eq: mainspectral} and \eqref{eq: maingeometric},
where $N$ is the appropriately defined level of $K$
and $\delta > 0$ depends only on $G$
(cf. \eqref{EquationMinlevel} below for the precise definition of $N$, which coincides with the standard one if $G$ is $F$-simple and simply connected).
A natural problem, which will not be considered here, is to obtain from this an estimate for the difference $\abs{\mu^{G, S}_{K} (A) - \mu_{\plnch,\cntgr} (A)}$
in terms of $N$ for suitable subsets $A \subset \Pi (G (F_S)^1)$. This would require a quantitative version
of the density principle (Theorem \ref{thm: reduction}).

In the case of principal congruence subgroups, these results were already obtained in \cite{FLM3}.
The main new input for extending these results to arbitrary non-degenerate families is the 
estimate of \cite[\S5]{1308.3604} (cf. also \cite[\S5]{ABBGNRS}) 
for the volumes of intersections of conjugacy classes with open subgroups of $\fixco$.
Otherwise, we follow pretty much the line of argument of \cite{FLM3}.
For the spectral limit property, a key ingredient, both in \cite{FLM3} and here, is the spectral expansion obtained in \cite{MR2811597, MR2811598}.
For the geometric limit property one needs to revisit Arthur's methods and results (mostly from \cite{MR828844}) in some detail, but once again,
there is no conceptual difficulty.

While Theorem \ref{MainTheorem} and Corollary \ref{CorClassical} give a partial solution to the limit multiplicity problem (for groups $G$
satisfying \TWN\ and \BD), one may try to extend our methods to deal with the general problem.
First note that in Theorem \ref{MainTheorem} we restricted ourselves to collections $\comfam$ consisting of open subgroups of a fixed open compact subgroup
$\fixco$ of $G (\A^S)$.
It would be more natural to consider instead arbitrary open compact subgroups of $G (\A^S)$.
However, in general there is no system of representatives for the $G(\A^S)$-conjugacy classes of all such subgroups that is contained in
a compact subset of $G (\A^S)$.
Our present treatment of the geometric side follows Arthur's treatment quite closely, and consequently, the dependence
of our estimates on the support of the test function $h \otimes \one_K$ is not explicit. 
To deal with general subgroups $K$, a necessary prerequisite would be
a refinement of the main geometric estimate of Theorem \ref{thm: maingeom} that addresses this problem.

In view of the limit multiplicity problem for general lattices in groups of the form $\genG = G(F_S)$,
the following further extensions seem interesting: one could consider (assuming that $G$ satisfies the assumptions of Corollary \ref{CorClassical})
the class of all lattices in $G (F_S)$ that contain a lattice of the form $\Gamma_K$ (necessarily as a subgroup of finite index).
While any such lattice that is a subgroup of $G(F)$ is necessarily itself of the form $\Gamma_K$ (by strong approximation),
in general there also exist such lattices that are not contained in $G(F)$. The maximal lattices of this form have been described
by Prasad \cite{MR1019962} and Borel--Prasad \cite{MR1019963,MR1079647}.
A technically more demanding further step would be to treat (as in \cite{ABBGNRS})
collections of lattices $\Gamma$ in a fixed group
$\genG = G(F_S)$ that do not belong to finitely many commensurability classes. For example, one could fix a Chevalley group $G$ and an \'etale algebra
$E$ of dimension $>1$ over $\R$, and consider the lattices $G(\intgr_F)$ in the group $G(E)$, where $F$ varies over all number fields with $F\otimes\R=E$.
We note that for $G = \GL (n)$, Arthur's trace formula has been recently considered from this point of view by Jasmin Matz \cite{1310.6525, MR3314592}.

\section{The geometric limit property} \label{SectionGeom}

We start with the geometric side of the trace formula and analyze the geometric limit property, which we will prove in a refined quantitative
form in Theorem \ref{thm: maingeom} below.
The basic strategy is similar to \cite{FLM3}, where the case of principal congruence subgroups is treated. The additional ingredient
is the estimate of \cite[\S 5]{1308.3604} for the volume of the set
$\{ k \in \K^S  :  k^{-1} x k \in K \}$, where $K$ is
an open subgroup of $\fixco$ and $x \in G (\A^S)$.

\subsection{Notation}
As before, $G$ is a (connected) reductive group defined over a number field $F$ and $\A$ is the ring of adeles of $F$.
Let $\Ad: G \to \GL (\mathfrak{g})$ be the adjoint representation of $G$, whose image is the adjoint group $G^{\ad}$.
Let $S \supset S_\infty$ be a finite set of places of $F$ and $\fixco \subset G (\A^S)$ an arbitrary
compact open subgroup, which we regard as fixed in the following.

Fix a maximal $F$-split torus of $G$ with centralizer $M_0$ and fix a minimal parabolic subgroup $P_0$ of $G$ defined over $F$
containing $M_0$.
We also fix a maximal compact subgroup $\K=\prod_v \K_v = \K_\infty \K_{\fin}$ of $G(\A)=G(F_\infty)\times G(\A_{\fin})$
which is admissible with respect to $M_0$ \cite[\S 1]{MR625344}.
Let $\aaa_{M_0}^*=X^*(M_0)\otimes\R$ where $X^*(M_0)$ is the lattice of $F$-rational characters of $M_0$,
and let $\aaa_{M_0}$ be the dual space of $\aaa_{M_0}^*$.
Fix a Euclidean norm $\norm{\cdot}$ on $\aaa_{M_0}$.

As in \cite{FLM3} we fix a faithful $F$-rational representation $\rho: G \to \GL (V)$ and an $\intgr_F$-lattice $\Lambda$ in the representation space
$V$ such that the stabilizer of $\hat{\Lambda} = \hat{\intgr}_F \otimes \Lambda \subset \A_{\fin} \otimes V$ in $G(\A_{\fin})$ is the group $\K_{\fin}$.
For any non-zero ideal $\nnn$ of $\intgr_F$ let $\K(\nnn)$ be the principal congruence subgroup of level $\nnn$, i.e.,
\[
\K (\nnn) =\{ g \in G (\A_{\fin}) \, : \, \rho (g) v \equiv v \pmod{\nnn \hat{\Lambda}}, \quad \forall v \in \hat{\Lambda} \}.
\]
We denote by $\inorm (\nnn) = [ \intgr_F : \nnn]$ the ideal norm of $\nnn$.
More generally, for a finite set $S \supset S_\infty$ of places of $F$ and an ideal $\nnn\ne0$ coprime to $S_{\fin}$ let $\K^S (\nnn)=\K (\nnn) \cap \K^S$,
an open normal subgroup of $\K^S = \prod_{v \notin S} \K_v$. Similarly, for a finite set $S$ of finite places of $F$ and an ideal $\nnn\ne0$
whose prime factors are contained in $S$ let
\[
\K_S(\nnn) =\{ g \in G (F_S) \, : \, \rho (g) v \equiv v \pmod{\nnn\intgr_S\otimes\Lambda}, \quad \forall v \in \intgr_S\otimes\Lambda \}.
\]

We recall the definition of the \emph{relative level} of a compact open subgroup from \cite[\S 5.1]{FLM3}.
Let $H$ be a Zariski closed normal subgroup of $G$ defined over $F$.
We write $H(\A)^+$, or simply $H^+$, for the image of the map $H^{\SC}(\A)\rightarrow H(\A)$ where $H^{\SC}$
is the simply connected cover of the derived group of $H$. (Note that the derived group of $H$ is connected.)
For a compact open subgroup $K$ of $G(\A^S)$ we set
\begin{equation}
\level(K;H^+)=\min\{\inorm(\nnn):\K^S(\nnn)\cap H(\A^S)^+\subset K\},
\end{equation}
where $\nnn$ ranges over the ideals of $\intgr_F$ which are coprime to $S$.
We also set
\begin{equation} \label{EquationMinlevel}
\minlevel(K)=\min_H\level(K;H^+),
\end{equation}
where $H$ ranges over all Zariski closed non-central normal subgroups of $G$ defined over $F$.
(It is enough to consider the finitely many $F$-simple normal subgroups of $G$.)
Recall that a family $\comfam$ consisting of open subgroups of $\fixco$ is non-degenerate
(in the sense of Definition \ref{DefinitionNondeg} above) if and only if
the map $K \in \comfam \mapsto K \cap H (\A^S)^+$ is finite-to-one for any $F$-simple
normal subgroup $H$ of $G$. This is clearly equivalent to the condition that
$\minlevel(K)\rightarrow\infty$, $K\in\comfam$.

For each $k \ge 0$ fix a basis $\mathcal{B}_k$ of $\univ(\Lie G(F_\infty)^1 \otimes \C)_{\le k}$,
equipped with the usual filtration, and set
\[
\norm{h}_k =\sum_{X\in\mathcal{B}_k}\norm{X\star h}_{L^1(G(\A)^1)}
\]
for functions $h\in C_c^\infty(G(\A)^1)$, where we view $X$ as a left-invariant differential operator on $G (F_\infty)$.
For a compact subset $\Omega \subset G (\A)^1$
the norms $\norm{\cdot}_k$ endow the space $C^\infty_\Omega (G(\A)^1)$
of all smooth functions on $G(\A)^1$ supported in $\Omega$
with the structure of a Fr\'echet space. (It is equivalent to use
the seminorms $\sup_{x \in \Omega} \abs{(X \star h)(x)}$ for $X \in \univ(\Lie G_\infty \otimes \C)$ instead of the norms
$\norm{h}_k$, $k \ge 0$.)
Analogously, we set
\[
\norm{h}_k =\sum_{X\in\mathcal{B}_k}\norm{X\star h}_{L^1(G(F_S)^1)}
\]
for $k \ge 0$ and $h\in C_c^\infty(G(F_S)^1)$. As above, for a compact subset $\Omega_S \subset G(F_S)^1$ these norms furnish
the space $C_{\Omega_S}^\infty(G(F_S)^1)$ of all smooth functions on $G(F_S)^1$ supported in $\Omega_S$ with
the structure of a Fr\'echet space.
We also write $C_{\Omega_S} (G (F_S)^1)$ for the Banach space of continuous functions on $G(F_S)^1$ supported in $\Omega_S$.

We will use the notation $A\ll B$ to mean that there exists a constant $c$ (independent of the parameters under consideration) such that $A\le cB$.
If $c$ depends on some parameters (say $F$) and not on others then we will write $A\ll_FB$.

\subsection{The geometric side of the trace formula}
The point of departure of Arthur's trace formula is a certain distribution $J^T$ on $G (\A)^1$
which is defined for $T\in\aaa_{M_0}$ sufficiently regular in the positive Weyl chamber as the integral
over $G(F)\bs G(\A)^1$ of the so-called modified kernel (see \cite[\S6]{MR2192011}, which is based on \cite{MR518111};
see also \cite[Theorem 9.1]{MR2192011}, which is based on \cite[\S2]{MR625344}).
As a function of $T \in \aaa_{M_0}$, $J^T$ is a polynomial of degree at most
$d_0 = \dim \aaa_{M_0}-\dim X^*(G)\otimes\R$,
and the distribution $J$ (the geometric side of the trace formula) is defined
to be $J^{T_0}$, where $T_0\in\aaa_{M_0}$ is a certain distinguished point
specified in \cite[Lemma 1.1]{MR625344} that depends on $G$, $M_0$ and $\K$.
The choice of $T_0$ ensures, among other things, that $J$ does not depend on the additional choice of $P_0$,
although it still depends on $M_0$ and $\K$ (see [ibid., \S2]).

For our purposes the following characterization of the polynomials $J^T$ will be useful.
Recall the truncation function $F (\cdot, T) = F^G (\cdot, T)$ for $T \in \aaa_{M_0}$, which is the characteristic function of the truncated
Siegel domain, a certain compact subset of $G(F) \bs G (\A)^1$ (\cite[p. 941]{MR518111}, \cite[p. 1242]{MR828844}).
Also set $d(T)=\min_{\alpha\in\Delta_0}\sprod{\alpha}{T}$, where $\Delta_0$ is the set of simple roots (viewed also as linear forms on $\aaa_{M_0}$).

\begin{theorem}[Arthur] \label{thm: Arthur3.1}
Given $h\in C_c^\infty(G(\A)^1)$, $J^T(h)$ is the unique polynomial in $T \in \aaa_{M_0}$ such that
\[
\abs{J^T(h)-\int_{G(F)\bs G(\A)^1}F(x,T)\sum_{\gamma\in G(F)}h(x^{-1}\gamma x)\ dx}\ll_h (1+\norm{T})^{d_0}e^{-d(T)}
\]
for all $T$ with $d(T) \ge C$, where $C$ depends only on $h$.
\end{theorem}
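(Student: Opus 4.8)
The plan is to realise $J^T(h)$ concretely through Arthur's modified kernel, to extract the approximation estimate from Arthur's convergence analysis, and to settle uniqueness by an elementary argument along rays in $\aaa_{M_0}$. Recall that for a standard parabolic subgroup $P = M_P N_P \supseteq P_0$ one sets $K_P(x,y) = \int_{N_P(\A)} \sum_{\gamma \in M_P(F)} h(x^{-1}\gamma n y)\, dn$, so that $K_G(x,x) = \sum_{\gamma \in G(F)} h(x^{-1}\gamma x)$ is the kernel occurring in the statement, and forms
\[
k^T(x,h) = \sum_{P \supseteq P_0} (-1)^{\dim(A_P/A_G)} \sum_{\delta \in P(F)\bs G(F)} \hat\tau_P\bigl(H_P(\delta x) - T\bigr)\, K_P(\delta x, \delta x),
\]
the sum running over standard parabolic subgroups; its $P = G$ summand is precisely $K_G(x,x)$. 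By Arthur's work (\cite{MR518111}; see also \cite[\S 6]{MR2192011}) the integral $\int_{G(F)\bs G(\A)^1} \abs{k^T(x,h)}\, dx$ converges and, once $d(T)$ exceeds a constant depending on $h$, the function $T \mapsto \int_{G(F)\bs G(\A)^1} k^T(x,h)\, dx$ coincides with a polynomial of degree at most $d_0$; this polynomial is the $J^T(h)$ of the statement. It then remains to prove the bound
\[
\left\lvert \int_{G(F)\bs G(\A)^1} \bigl(k^T(x,h) - F(x,T)\, K_G(x,x)\bigr)\, dx \right\rvert \ll_h (1 + \norm{T})^{d_0}\, e^{-d(T)} \qquad (d(T) \ge C)
\]
and that at most one polynomial can satisfy it.

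For the bound I would show that the integrand $k^T(x,h) - F(x,T) K_G(x,x)$ is concentrated ``above the truncation level $T$'' in the cusps. Langlands' combinatorial lemma relating the cone functions $\tau_P$, $\hat\tau_P$ and the lower truncations $F^{M_P}$ (\cite{MR518111}) rewrites this difference as an alternating sum, over proper standard parabolics $P$, of terms $\sum_{\delta \in P(F)\bs G(F)} \Phi_P(\delta x, T)\, K_P(\delta x, \delta x)$, where the cutoff $\Phi_P(\cdot,T)$ is supported on the $y$ for which $H_P(y) - T$ lies a distance $\gg d(T)$ inside the relevant chamber. On such a region the compact support of $h$ forces cancellation between $K_G$ and the lower kernels $K_P$ — for $\SL(2)$ this is just Poisson summation over $N_P(F) \bs N_P(\A)$ — and Arthur's estimates for the $K_P$ along Siegel domains then produce a bound of the form (a continuous seminorm in $h$) times $e^{-d(T)}$, the polynomial factor $(1 + \norm{T})^{d_0}$ accounting for the volume of the truncated Siegel domain and of its corners. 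This is exactly the mechanism underlying Arthur's proof that $\int k^T$ converges and is polynomial, now tracked quantitatively; in practice I would either invoke the bound directly from \cite{MR518111, MR558260} or reprove the needed estimates following \cite[\S 6]{MR2192011}.

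For uniqueness, suppose $p_1$ and $p_2$ are polynomials on $\aaa_{M_0}$ both satisfying the displayed bound on $\{d(T) \ge C\}$. Then $q = p_1 - p_2$ is a polynomial with $\abs{q(T)} \ll_h (1 + \norm{T})^{d_0} e^{-d(T)}$ whenever $d(T) \ge C$. Fixing $T_1 \in \aaa_{M_0}$ with $\sprod{\alpha}{T_1} > 0$ for every $\alpha \in \Delta_0$, so that $d(tT_1) = t\, d(T_1) \to +\infty$ as $t \to +\infty$, the one-variable polynomial $t \mapsto q(tT_1)$ is $O_h\bigl((1 + t)^{d_0} e^{-t\, d(T_1)}\bigr)$ and hence tends to $0$, so it vanishes identically; in particular $q(tT_1) = 0$ for all $t$. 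As $T_1$ ranges over the open positive chamber, the vectors $tT_1$ sweep out a set with non-empty interior in $\aaa_{M_0}$, whence $q \equiv 0$.

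The step I expect to be the main obstacle is the bound in the second paragraph: making the comparison between Arthur's combinatorial truncation and the geometric Siegel-domain truncation $F(\cdot,T)$ quantitative, with genuine exponential decay $e^{-d(T)}$ and only polynomial loss in $\norm{T}$. This is the technical heart of Arthur's convergence argument for the trace formula; the remaining steps are routine bookkeeping and the soft uniqueness argument above.
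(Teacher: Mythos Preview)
Your proposal is correct in spirit and matches the paper's approach: both defer to Arthur's argument, and your outline of the mechanism (modified kernel, Langlands' combinatorial lemma, exponential decay from the support condition, plus the routine uniqueness argument) is accurate. The paper itself gives no self-contained proof; it simply records that this is a slight variant of \cite[Theorem 3.1]{MR828844} and that Arthur's proof there carries over verbatim once one replaces the unipotent variety $\mathcal{U}_G(\Q)$ (resp.\ $\mathcal{U}_{M_{P_1}}(\Q)$) by $G(F)$ (resp.\ $M_{P_1}(F)$), with the further remark that the stated bound is slightly sharper than Arthur's original but that his argument already yields it.

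The one concrete correction is bibliographic: the comparison you want between $J^T(h)$ and the $F(\cdot,T)$-truncated integral is not in \cite{MR518111} or \cite{MR558260} (those establish convergence and polynomiality of $\int k^T$), but in \cite[Theorem 3.1]{MR828844}, where Arthur introduces the truncation function $F(\cdot,T)$ and proves precisely this approximation (for the unipotent contribution). So your ``invoke the bound directly'' option should point there; everything else in your sketch is on target.
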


This is a slight variant of \cite[Theorem 3.1]{MR828844}.
The formulation differs in two aspects. First, we state the theorem for $J^T$ as a whole and not just for the unipotent contribution.
Second, the upper bound is slightly sharper than in [loc.~cit.].
However, the proof of [loc.~cit.] is valid almost verbatim, except that
every occurrence of $\mathcal{U}_G(\Q)$ and $\mathcal{U}_1(\Q)=\mathcal{U}_{M_{P_1}}(\Q)$ has to be replaced by $G(F)$ and $M_{P_1}(F)$, respectively.
(See also the remark after \cite[Theorem 3.4]{FLM3}.)

The distribution $J^T$ (and hence $J$) can be split according to geometric conjugacy classes (see \cite{MR828844, MR835041} and below).
In particular, the contribution of a singleton conjugacy class $\{z\}$, $z\in Z(F)$, is simply the constant polynomial
$\vol (G (F) \bs G (\A)^1) h (z)$.
Write
\[
J^T_{\nc}(h)=J^T(h)-\vol (G (F) \bs G (\A)^1)\sum_{z\in Z(F)}h (z), \quad h \in C^\infty_c (G (\A)^1),
\]
and set $J_{\nc} (h) =J_{\nc}^{T_0} (h)$.
We want to estimate the latter distribution for the functions $h = h_S \otimes \one_{K}$ in terms of $K$. When $G$ is $F$-simple and simply connected,
it is possible to estimate $J_{\nc} (h_S \otimes \one_{K})$ in terms of the level of $K$. In general we have to use the modified
definition of level introduced in \eqref{EquationMinlevel} above.

\begin{theorem} \label{thm: maingeom}
There exist $\delta>0$ and an integer $k\ge0$, such that for any
compact open subgroup $\fixco$ of $G(\A^S)$, any compact subset $\Omega_S\subset G(F_S)^1$
and any integral ideal $\nnn_S$ of $\intgr_F$ whose prime factors are in $S_{\fin}$, we have
\[
J_{\nc} (h_S \otimes \one_K) \ll_{\fixco, \Omega_S} (1 + \log \inorm (\nnn_S))^{d_0}\minlevel(K)^{-\delta} \norm{h_S}_k
\]
for any bi-$\K_{S_{\fin}} (\nnn_S)$-invariant function $h_S \in C^\infty_{\Omega_S} (G (F_S)^1)$
and any open subgroup $K$ of $\fixco$.
In particular, if $\comfam$ is a non-degenerate family of open subgroups of $\fixco$, then
\[
J_{\nc} (h_S \otimes \one_K)\rightarrow0,\ \ K\in\comfam,
\]
for all $h_S \in C^\infty_c(G (F_S)^1)$.
\end{theorem}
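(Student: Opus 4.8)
\medskip

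The plan is to follow the strategy of \cite{FLM3}, feeding in the new volume estimate of \cite[\S5]{1308.3604}. Since $\minlevel(K)\to\infty$ for a non-degenerate family, and every $h_S\in C_c^\infty(G(F_S)^1)$ lies in some $C^\infty_{\Omega_S}(G(F_S)^1)$ and is bi-$\K_{S_{\fin}}(\nnn_S)$-invariant for a suitable ideal $\nnn_S$, the final assertion of the theorem is an immediate consequence of the quantitative bound, so I would concentrate on the latter. Fix $h=h_S\otimes\one_K$ as in the statement, and recall that $J^T(h)$ is a polynomial in $T\in\aaa_{M_0}$ of degree at most $d_0$. By an effective form of Theorem~\ref{thm: Arthur3.1} — in which the implied constant and the admissibility threshold $C=C(\fixco,\Omega_S)$ are made explicit in terms of the support $\Omega_S\times\fixco$ and the norm $\norm{h_S}_k$, as in \cite[\S3]{FLM3} — one has, for all $T$ with $d(T)\ge C$,
\[
J^T(h)=\int_{G(F)\bs G(\A)^1}F(x,T)\sum_{\gamma\in G(F)}h(x^{-1}\gamma x)\,dx+O_{\fixco,\Omega_S}\bigl((1+\norm{T})^{d_0}e^{-d(T)}\norm{h_S}_k\bigr).
\]
Splitting the inner sum into central and non-central conjugacy classes and using $x^{-1}zx=z$ for $z\in Z(F)$, the central part of the integral is $\bigl(\sum_{z\in Z(F)\cap K}h_S(z)\bigr)\int_{G(F)\bs G(\A)^1}F(x,T)\,dx$; since basic reduction theory gives $\int F(x,T)\,dx=\vol(G(F)\bs G(\A)^1)+O\bigl((1+\norm{T})^{d_0}e^{-d(T)}\bigr)$, subtracting $\vol(G(F)\bs G(\A)^1)\sum_{z\in Z(F)}h(z)$ leaves
\[
J^T_{\nc}(h)=I_{\nc}(T)+O_{\fixco,\Omega_S}\Bigl((1+\norm{T})^{d_0}e^{-d(T)}\bigl(\norm{h_S}_k+\sum_{z\in Z(F)\cap K}\abs{h_S(z)}\bigr)\Bigr),
\]
where $I_{\nc}(T)=\int_{G(F)\bs G(\A)^1}F(x,T)\sum_{\gamma\in G(F)\setminus Z(F)}h(x^{-1}\gamma x)\,dx$. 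Thus everything reduces to a good bound for $I_{\nc}(T)$ together with control of the central point values $h_S(z)$.

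The core of the argument is the estimate for $I_{\nc}(T)$. Writing $x=(x_S,x^S)\in G(F_S)\times G(\A^S)$ one has $h(x^{-1}\gamma x)=h_S(x_S^{-1}\gamma x_S)\,\one_K\bigl((x^S)^{-1}\gamma x^S\bigr)$: the factor $h_S(x_S^{-1}\gamma x_S)$ confines the $G(F_S)$-conjugacy class of $\gamma$ to $\Omega_S$, while $\one_K\bigl((x^S)^{-1}\gamma x^S\bigr)$ forces $\gamma$ to be $\fixco$-integral away from $S$; together these restrict $\gamma$ to finitely many $G(F)$-conjugacy classes, their number bounded in terms of $\fixco$ and $\Omega_S$, and for each such class the $\A^S$-integration reduces, after unfolding along the centralizer of $\gamma$, to an integral of $\one_K(k^{-1}\gamma k)$ over $k\in\K^S$. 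This is exactly where the estimate of \cite[\S5]{1308.3604} enters: for $\gamma\notin Z(F)$,
\[
\vol\bigl(\{k\in\K^S:k^{-1}\gamma k\in K\}\bigr)\ll_{\fixco}\minlevel(K)^{-\delta_0}
\]
for some $\delta_0>0$. Inserting this, and carrying out the remaining ($T$-dependent, non-compact) integration over $x_S$ in the truncated Siegel domain along the lines of Arthur's treatment of the geometric side in \cite{MR828844} — for non-elliptic $\gamma$ this uses parabolic descent and the weight factors of the relevant weighted orbital integrals, while the orbital integrals at $S_{\fin}$ are controlled by $\norm{h_S}_k$ without loss in $\nnn_S$ — one would obtain a bound of the shape
\[
\abs{I_{\nc}(T)}\ll_{\fixco,\Omega_S}(1+\norm{T})^{A}\,\minlevel(K)^{-\delta_0}\,\norm{h_S}_k,\qquad d(T)\ge C,
\]
with $A$ depending only on $G$.

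To conclude, I would choose $T$ and then pass back to the distinguished point $T_0$. Since $\abs{h_S(z)}\ll_{\fixco}\inorm(\nnn_S)^{D}\norm{h_S}_k$ for the finitely many relevant central $z$, where $\inorm(\nnn_S)^{-D}\ll_{\fixco}\vol(\K_{S_{\fin}}(\nnn_S))$, but there is no such loss for $\norm{h_S}_k$, I would take a point $T_*$ in the positive chamber with $d(T_*)$ equal to a small fixed multiple of $\log\minlevel(K)$ plus $(D+1)\log\inorm(\nnn_S)$, so that $e^{-d(T_*)}\inorm(\nnn_S)^D\ll\minlevel(K)^{-\eta}$ for some $\eta>0$ while $\norm{T_*}\ll\log\bigl(\minlevel(K)\,\inorm(\nnn_S)\bigr)$. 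Letting $T$ range over a ball of fixed radius about $T_*$ (still with $d(T)\ge C$), the displays above combine to $\abs{J^T_{\nc}(h)}\ll_{\fixco,\Omega_S}(1+\log\inorm(\nnn_S))^{d_0}\minlevel(K)^{-\delta'}\norm{h_S}_k$ for some $\delta'>0$, after absorbing the powers of $\log\minlevel(K)$ into the saving. Finally, $J^T_{\nc}(h)$ is a polynomial of degree at most $d_0$ in $T$, so by the standard growth estimate for polynomials bounded on an interval (Chebyshev), applied along the segment joining $T_*$ to $T_0$, the value $J_{\nc}(h)=J^{T_0}_{\nc}(h)$ is at most the supremum of $\abs{J^T_{\nc}(h)}$ over that ball times $O\bigl((1+\norm{T_0-T_*})^{d_0}\bigr)=O\bigl((\log(\minlevel(K)\inorm(\nnn_S)))^{d_0}\bigr)$; carefully tracking the degree $d_0$ through this last step is what produces precisely the factor $(1+\log\inorm(\nnn_S))^{d_0}$ and, for any $\delta<\delta'$, the asserted bound.

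The step I expect to be the main obstacle is the estimate for $I_{\nc}(T)$: one would have to re-enter Arthur's proof of the convergence and polynomiality of the geometric side in \cite{MR828844} and check that the saving $\minlevel(K)^{-\delta_0}$ provided by \cite{1308.3604} genuinely survives the nested truncation — most delicately for conjugacy classes that are not elliptic, where the centralizer has a nontrivial split component and the integration over $\K^S$ becomes entangled with the weight functions, so that Arthur's splitting and descent formulas must be brought in. A secondary, essentially bookkeeping, task is to make the implied constant and the threshold in Theorem~\ref{thm: Arthur3.1} effective with the stated dependence on $\nnn_S$; as emphasized in the introduction, none of this is conceptually new beyond \cite{FLM3} once the group-theoretic volume bound is available.
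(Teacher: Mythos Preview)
Your outline has the right overall shape (Arthur's approximation, a bound on the truncated integral, then polynomial interpolation from a well-chosen $T_*$ back to $T_0$), but the heart of the argument --- the bound on $I_{\nc}(T)$ --- rests on a claim that is simply false. You assert that for every $\gamma\notin Z(F)$ one has
\[
\vol\bigl(\{k\in\K^S:k^{-1}\gamma k\in K\}\bigr)\ll_{\fixco}\minlevel(K)^{-\delta_0}
\]
uniformly. The estimate of \cite[\S5]{1308.3604} (recorded here as Lemma~\ref{LemmaConjugacyClass}) does \emph{not} say this: the saving at a prime $p\mid N$ is only $p^{-\varepsilon n_p}$ when $\lambda_p^H(\gamma)<\delta n_p$, and there is no saving at all at primes where $\gamma$ is $p$-adically close to the center. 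A unipotent $\gamma=\exp X$ with $X$ highly divisible by $p$ furnishes an obvious counterexample to your uniform bound. This is precisely why the paper cannot simply plug in the volume estimate and sum: one must partition the sum over $\gamma$ according to the set $\primes'\subset\primes(N)$ of ``good'' primes, and for the complementary primes trade the absent volume saving for a lattice-point saving, i.e.\ a bound on the number of $\gamma$ with $\Lambda^H(\gamma)$ divisible by a large integer. That counting (Lemma~\ref{lem: latpntcnt} and Corollary~\ref{cor: ltcount}) is carried out after Arthur's reduction of the truncated integral to sums over $\nu\in U(F)\cap a\Omega a^{-1}$ for the various parabolics; your proposed ``unfolding along the centraliser'' does not give access to this structure, and the assertion that the support conditions confine $\gamma$ to finitely many $G(F)$-conjugacy classes, while true, is not what controls the size of the sum as $T$ grows.

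Two further points. First, the paper needs the decomposition $J_{\nc}=\sum_{H\ne Z_G}J_{\alfctrs H}$ over non-central normal subgroups $H\in\fctrz(G)$ (Lemma~\ref{lem: Art3.1gen} and Corollary~\ref{cor: uprbnd}): the volume bound and the lattice-point count are both stated relative to a fixed $H$ and yield $\level(K;H^+)^{-\delta}$, and only after summing over $H$ does one get $\minlevel(K)^{-\delta}$. Your approach never introduces $H$ and so cannot produce the right quantity in the non-$F$-simple case. Second, the effective form of Theorem~\ref{thm: Arthur3.1} that you invoke actually carries a factor $\inorm(\nnn)^m$ (where $\nnn$ records the level of \emph{both} $K$ and $\K_{S_{\fin}}(\nnn_S)$), not merely the harmless $\inorm(\nnn_S)^D$ you allow for; this is what forces the interpolation point $T_*$ in the paper's argument to have $d(T_*)$ proportional to $\log\level(K;H^+)$ as well as to $\log\inorm(\nnn_S)$.
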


We will prove the theorem in the rest of this section.
We end this subsection with a couple of remarks.

\begin{remark}
In the case where $G$ is anisotropic modulo the center, Theorem \ref{thm: maingeom}
(together with \S \ref{SectionStrategy})
already implies the limit multiplicity property, since the spectral
limit property is trivial in this case.
Moreover, the proof of Theorem \ref{thm: maingeom} can be much simplified, since $J_{\nc} (h)$ is then given by the
absolutely convergent integral
$\int_{G(F)\bs G(\A)^1}\sum_{\gamma\in G(F) - Z (F)}h(x^{-1}\gamma x)\ dx$.
(More general results for this case have been obtained independently in \cite{ABBGNRS}.)
\end{remark}

\begin{remark}
In the case where $K$ is a principal congruence subgroup, a quite precise quantitative estimate has already been obtained in \cite[Proposition 3.8]{FLM3}.
In the general case, as will be explained below, the constant $\delta > 0$ in Theorem \ref{thm: maingeom}
is directly derived from the estimates of \cite{1308.3604}, which are based on the approximation principle for open subgroups of $\fixco$. In particular,
any effective version of these estimates would provide an effective value of $\delta$.
\end{remark}

\subsection{Decomposition according to normal subgroups}
In Theorem \ref{thm: Arthur3.1}, $J^T(h)$ is approximated by an integral of a sum over $\gamma \in G(F)$, and
$J^T_{\nc} (h)$ is obtained by restricting the sum to non-central elements $\gamma$.
In order to deal with the case where $G$ is not necessarily $F$-simple modulo the center,
we partition the set $G(F)$ according to the Zariski closed normal subgroup of $G^{\ad}$ generated by $\Ad(\gamma)$
and decompose the distributions $J^T$ and $J_{\nc}^T$ accordingly. Equivalently, we consider (not necessarily connected) Zariski closed
normal subgroups $H$ of $G$ containing $Z_G$.

For any $G(F)$-conjugation invariant subset $C\subset G(F)$ let $\truncJ^T_C$ be the Radon measure
on $G(\A)$ (or on any closed $G(\A)$-conjugation invariant subset of $G(\A)$ containing $C$) given by
\[
\truncJ^T_C(f)=\int_{G(F)\bs G(\A)^1}F(x,T)\sum_{\gamma\in C}f(x^{-1}\gamma x)\ dx.
\]
Clearly, $\truncJ^T_{C_1\cup C_2}=\truncJ^T_{C_1}+\truncJ^T_{C_2}$ if $C_1$ and $C_2$ are disjoint.
We then have the following technical refinement of Theorem \ref{thm: Arthur3.1}.

\begin{lemma} \label{lem: Art3.1gen}
Let $H$ be a Zariski closed (not necessarily connected) normal subgroup of $G$ defined over $F$.
Then for any $f \in C_c^\infty(H(\A)^1)$ there exists a polynomial $J^T_H(f)$ in $T\in\aaa_{M_0}$ characterized by the following property.
There exist integers $k,m\ge1$ such that for any compact set $\Omega \subset H (\A)^1$,
any non-zero ideal $\nnn$ of $\intgr_F$ and any function $f \in C^\infty_\Omega(H(\A)^1)$ which is bi-invariant under $\K(\nnn)\cap H(\A)^+$, we have
\begin{equation} \label{eq: modH}
J^T_H (f) - \truncJ_{H (F)}^T(f) \ll_{\Omega}\norm{f}_k \inorm (\nnn)^m (1+\norm{T})^{d_0}e^{-d(T)},\ \ d(T) \ge d_\Omega,
\end{equation}
where $d_\Omega>0$ and the implied constant depend only on $\Omega$.
\end{lemma}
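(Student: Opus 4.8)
The plan is to reduce this statement to Theorem \ref{thm: Arthur3.1} (Arthur's characterization of $J^T$) applied to $G$ itself, by extending a bi-$\K(\nnn)\cap H(\A)^+$-invariant function $f$ on $H(\A)^1$ to a function $\tilde f$ on $G(\A)^1$ and then isolating the $H(F)$-part of the geometric sum. First I would fix a decomposition of $G(\A)^1$ that separates the $H$-direction from a transversal. Write $G^{\ad}$ for the adjoint group; since $H$ is normal, $H^{\ad} = \Ad(H)$ is a normal subgroup of $G^{\ad}$, and there is an almost-direct product decomposition of $G^{\ad}$ (up to isogeny) into $H^{\ad}$ and a complementary normal subgroup. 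Lifting this back and accounting for the center, one obtains a product structure locally on $G(\A)^1$ near the support of the relevant conjugacy classes. Concretely, I would pick a $G(F)$-conjugation-invariant, Zariski closed, $\A^1$-bounded neighbourhood $\Omega'$ of $\Omega$ in $G(\A)^1$ and a smooth cutoff so that $\tilde f = f \otimes \phi$, where $\phi \in C_c^\infty$ of the transversal factor is supported near the identity, is bi-$\K(\nnn')$-invariant for a suitable ideal $\nnn'$ with $\inorm(\nnn') \ll \inorm(\nnn)^{O(1)}$ (the extra power coming from the finite index of $\K(\nnn) \cap H(\A)^+$ inside $\K(\nnn) \cap H(\A)$ and from choosing $\phi$ to be an indicator of a small principal congruence subgroup in the transversal).

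Next I would apply Theorem \ref{thm: Arthur3.1} to $\tilde f$ on $G(\A)^1$. This gives a polynomial $J^T(\tilde f)$ in $T$, approximated to within $\ll_{\tilde f}(1+\norm{T})^{d_0}e^{-d(T)}$ by $\int_{G(F)\bs G(\A)^1} F(x,T)\sum_{\gamma \in G(F)} \tilde f(x^{-1}\gamma x)\, dx = \sum_{C}\truncJ^T_C(\tilde f)$, where $C$ runs over the $G(F)$-conjugacy classes meeting $\supp(\tilde f)$. The key point is that, by the choice of $\phi$ supported close to the identity in the transversal direction and the product structure of conjugation, a class $C$ contributes to $\truncJ^T_C(\tilde f)$ only if $C \subset H(F)$, in which case the conjugation integral over $G(F)\bs G(\A)^1$ factors, with the transversal integral of $\phi$ contributing a nonzero constant $c(\phi)>0$ that depends only on the chosen data (in particular, not on $f$ or $\nnn$). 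Hence $\sum_C \truncJ^T_C(\tilde f) = c(\phi)\,\truncJ^T_{H(F)}(f)$, and I would \emph{define} $J^T_H(f) := c(\phi)^{-1} J^T(\tilde f)$, which is a polynomial in $T$ of degree $\le d_0$. (One has to check independence of the auxiliary choices $\Omega'$, $\phi$, $c(\phi)$: two choices differ by a transversal average, and by the characterization-by-approximation property a polynomial in $T$ satisfying the bound \eqref{eq: modH} with the required error is unique, so $J^T_H(f)$ is well defined.) Tracking the dependence of the implied constant in Theorem \ref{thm: Arthur3.1} on $\tilde f$ through the norms $\norm{\cdot}_k$, and using that $\norm{\tilde f}_k \ll_{\Omega}\norm{f}_k\,\inorm(\nnn)^{m}$ for some $m$ (the $\inorm(\nnn)^m$ absorbing the $L^1$-mass of the transversal indicator $\phi$, whose support shrinks with $\nnn'$), yields \eqref{eq: modH} with the stated shape; here $d_\Omega$ comes from the constant $C$ in Theorem \ref{thm: Arthur3.1}, which depends only on $\supp(\tilde f)$ and hence only on $\Omega$ and the fixed transversal data.

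The main obstacle I expect is making the geometric factorization $\sum_C \truncJ^T_C(\tilde f) = c(\phi)\,\truncJ^T_{H(F)}(f)$ genuinely clean — i.e. ensuring that, after conjugation, the truncation function $F(x,T)$ and the transversal variable decouple well enough that (a) only classes inside $H(F)$ survive and (b) the surviving integral really does split as a product with a constant transversal factor. Arthur's truncation $F(\cdot,T)$ is defined via the geometry of $G$, not of $H$, so the factorization is not completely formal; one needs the product decomposition of $G^{\ad}$ to be compatible with the minimal parabolic and the choice of $M_0$ (which can be arranged since $H$ is normal, so $P_0 \cap H$ is a minimal parabolic of $H$ and the $\aaa_{M_0}$-data splits accordingly), and one needs the support of $\phi$ small enough that conjugates $x^{-1}\gamma x$ landing in $\supp(\tilde f)$ force $\gamma$ itself into a small neighbourhood of $H(F)$ inside $G(F)$, hence into $H(F)$ by discreteness. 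A careful but routine bookkeeping of these compatibilities, following \cite{MR828844} as in the proof of Theorem \ref{thm: Arthur3.1}, should suffice; the polynomial-uniqueness characterization then does the rest and makes all auxiliary choices disappear from the final statement.
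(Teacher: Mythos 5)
There is a genuine gap, and it sits exactly at the point you flagged as the "main obstacle": the claim that, once $\phi$ is supported near the identity of the transversal factor, only $G(F)$-classes contained in $H(F)$ can contribute to $\truncJ^T_{G(F)}(\tilde f)$. This fails because of unipotent elements in the transversal direction. If $\gamma\in G(F)$ has non-trivial \emph{unipotent} image in the complementary factor $L^{\ad}$, its conjugates $x^{-1}\gamma x$ come arbitrarily close to the identity in that coordinate as $x$ moves deep into the Siegel domain (e.g.\ for $L=\SL_2$, conjugating $\left(\begin{smallmatrix}1&b\\0&1\end{smallmatrix}\right)$ by $\operatorname{diag}(t,t^{-1})$), and the truncation $F(\cdot,T)$ lets $x$ go precisely that deep as $d(T)$ grows. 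No shrinking of $\operatorname{supp}\phi$ removes these classes, and discreteness of $G(F)$ does not help since unipotent conjugacy classes accumulate at $1$. Their contribution is not an error term: already for a true direct product $G=H\times L$ one has $J^T(f\otimes\phi)=J^{T_H}_H(f)\,J^{T_L}_L(\phi)$, and $J^{T_L}_L(\phi)$ contains weighted unipotent orbital integrals of $\phi$, so it is a non-constant polynomial in $T_L$ for any bump or indicator $\phi$ near the identity. Hence $c(\phi)^{-1}J^T(\tilde f)$ is not the polynomial asymptotic to $\truncJ^T_{H(F)}(f)$, and the proposed identity $\sum_C\truncJ^T_C(\tilde f)=c(\phi)\,\truncJ^T_{H(F)}(f)$ is false; the uniqueness-of-polynomial argument then has nothing to apply to. This is precisely why the paper remarks that the lemma is \emph{not} a formal consequence of Theorem \ref{thm: Arthur3.1}.

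The actual proof does not reduce to Theorem \ref{thm: Arthur3.1} but redoes Arthur's argument from \cite{MR828844} for a modified kernel attached to $H$: one sums only over parabolic subgroups containing $P_0^{(H)}$ (the normalizer of $P_0\cap H$) and over rational elements of $H(F)\cap M_P(F)$, defines $J^T_H(f)$ as the integral of this kernel, and reruns the estimates of \cite[\S3]{MR828844} with $\mathcal{U}_G$ replaced by $H(F)$, $N_{P_1}$ by $N_{\tilde P_1}$, etc. A secondary but also substantive point: even where your reduction would apply, Theorem \ref{thm: Arthur3.1} only gives an implied constant depending on the test function ($\ll_h$), whereas the lemma needs the explicit shape $\norm{f}_k\,\inorm(\nnn)^m$ with constants depending only on $\Omega$; obtaining that uniformity requires reopening Arthur's proof (where the level enters through the quantity $N(f)$), which again is what the paper does rather than quoting Theorem \ref{thm: Arthur3.1} as a black box.
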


We remark that in general the distributions $\truncJ_{H (F)}^T$ and $J^T_H$ are \emph{not} simply the distributions
$\truncJ^T$ and $J^T$ with respect to $H$, even if $H$ contains the center of $G$ (which will be the only
case relevant for us). Thus, Lemma \ref{lem: Art3.1gen} is not a formal consequence of Theorem \ref{thm: Arthur3.1}
as stated. However, the proof of \cite[Theorem 3.1]{MR828844} carries over to the case at hand with minor modifications.
We defer the detailed proof of Lemma \ref{lem: Art3.1gen}, which is independent from the rest of this section, to \S \ref{ProofLemma} below.

In the following,
we will view the distributions $J^T_H$ also as distributions on $C_c^\infty(G(\A)^1)$ (by restriction of functions).

Let now $\fctrz(G)$ be the set of all Zariski closed normal subgroups $H$ of $G$ defined over $F$ such that $Z_{H}=Z_G$.
The map $H \mapsto H^{\ad}$ gives rise to a one-to-one correspondence between $\fctrz(G)$ and the set of all Zariski closed
normal subgroups of $G^{\ad}$ defined over $F$, which is just the set of all products of the $F$-simple factors of $G^{\ad}$.
For any $H \in \fctrz (G)$ set
\[
\alfctrs{H} = H - \bigcup_{H'\in\fctrz(G), \, H' \subsetneq H} H',
\]
which is a conjugation-invariant relatively open subset of $H$.
Thus, we have a partition
\[
H=\coprod_{H'\in\fctrz(G), \, H' \subset H}\alfctrs{H'}.
\]
Note that $\gamma\in G(F)$ belongs to $\alfctrs{H}(F)$ if and only if $H^{\ad}$ is the normal subgroup of $G^{\ad}$
generated by $\Ad(\gamma)$.
Define
\[
J^T_{\alfctrs{H}}(f)=\sum_{H'\in\fctrz(G), \, H' \subset H}(-1)^{r(H)-r(H')}J^T_{H'}(f)
\]
where $r(H)$ is the number of $F$-simple factors of $H^{\ad}$. Thus,
\[
J^T_H(f)=\sum_{H'\in\fctrz(G), \, H' \subset H}J^T_{\alfctrs{H'}}(f).
\]
In particular,
\[
J(f)=\sum_{H\in\fctrz(G)}J_{\alfctrs{H}}(f)
\]
and
\begin{equation} \label{eq: Jnc}
J_{\nc}(f)=\sum_{H\in\fctrz(G): \, H\ne Z_G}J_{\alfctrs{H}}(f).
\end{equation}

From Lemma \ref{lem: Art3.1gen}, applied to all $H \in \fctrz(G)$, we deduce

\begin{corollary} \label{cor: uprbnd}
There exist integers $k,m\ge1$ such that
for any compact set $\Omega \subset G (\A)^1$, any non-zero ideal $\nnn$ of $\intgr_F$ 
and any function $h \in C^\infty_\Omega(G(\A)^1)$ which is bi-invariant under the group $\K(\nnn)\cap H(\A)^+$, we have
\[
J^T_{\alfctrs{H}} (h) - \truncJ_{\alfctrs{H}(F)}^T(h) \ll_{\Omega}\norm{h}_k \inorm (\nnn)^m (1+\norm{T})^{d_0}e^{-d(T)}, \quad d(T) \ge d_\Omega,
\]
where $d_\Omega>0$ and the implied constant depend only on $\Omega$.
\end{corollary}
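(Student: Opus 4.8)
The plan is to deduce the corollary directly from Lemma \ref{lem: Art3.1gen} by a finite inclusion-exclusion over the poset $\fctrz(G)$, keeping careful track of the hypotheses on bi-invariance and on the supports. First I would observe that $\fctrz(G)$ is a finite set (it is in bijection with the set of products of $F$-simple factors of $G^{\ad}$), so all the sums defining $J^T_{\alfctrs{H}}$ and $\truncJ^T_{\alfctrs{H}(F)}$ are finite. Next, since $\bigcap_{H' \in \fctrz(G)} H'(\A)^+ \supset$ any fixed $H(\A)^+$ only when $H$ is minimal, the key point is that $\K(\nnn) \cap H(\A)^+ \subset \K(\nnn) \cap H'(\A)^+$ whenever $H' \subset H$; hence a function $h$ that is bi-invariant under $\K(\nnn) \cap H(\A)^+$ is, a fortiori, bi-invariant under $\K(\nnn) \cap H'(\A)^+$ for every $H' \in \fctrz(G)$ with $H' \subset H$. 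This is exactly the compatibility needed so that Lemma \ref{lem: Art3.1gen} applies simultaneously to each $H'$ occurring in the defining alternating sum for $J^T_{\alfctrs{H}}$.

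Concretely, I would fix the compact set $\Omega \subset G(\A)^1$. For each $H' \in \fctrz(G)$ with $H' \subset H$, the restriction of $h$ to $H'(\A)^1$ is supported in the compact set $\Omega \cap H'(\A)^1$, and Lemma \ref{lem: Art3.1gen} (applied to $H'$) produces integers $k_{H'}, m_{H'} \ge 1$, a threshold $d_{H'} > 0$, and an implied constant (all depending only on $\Omega$, as $H'$ ranges over the finite set $\fctrz(G)$) such that
\[
J^T_{H'}(h) - \truncJ^T_{H'(F)}(h) \ll_\Omega \norm{h}_{k_{H'}} \inorm(\nnn)^{m_{H'}} (1 + \norm{T})^{d_0} e^{-d(T)}, \qquad d(T) \ge d_{H'}.
\]
Here one also uses that $\norm{h|_{H'(\A)^1}}_{k_{H'}} \le \norm{h}_{k_{H'}}$ (the relevant differential operators on $H'(F_\infty)$ form a subset of those on $G(F_\infty)$, up to the usual identifications), so the norm on the right may be taken with respect to $G(\A)^1$. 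Now set $k = \max_{H' \in \fctrz(G)} k_{H'}$, $m = \max_{H' \in \fctrz(G)} m_{H'}$, and $d_\Omega = \max_{H' \in \fctrz(G)} d_{H'}$. Since the norms $\norm{\cdot}_j$ are increasing in $j$, and since $\inorm(\nnn) \ge 1$, each of the finitely many estimates above is dominated by $\norm{h}_k \inorm(\nnn)^m (1 + \norm{T})^{d_0} e^{-d(T)}$ for $d(T) \ge d_\Omega$.

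Finally, I would take the alternating sum $\sum_{H' \in \fctrz(G),\, H' \subset H} (-1)^{r(H) - r(H')}(\,\cdot\,)$ of these inequalities: the left-hand side telescopes to $J^T_{\alfctrs{H}}(h) - \truncJ^T_{\alfctrs{H}(F)}(h)$ by the very definitions of $J^T_{\alfctrs{H}}$ and of $\truncJ^T_{\alfctrs{H}(F)}$ (recall $\truncJ^T$ is additive over disjoint conjugation-invariant subsets, and $\alfctrs{H} = H - \bigcup_{H' \subsetneq H} H'$ yields the same Möbius-type relation), and the right-hand side is bounded by $(\#\fctrz(G))$ times $\norm{h}_k \inorm(\nnn)^m (1 + \norm{T})^{d_0} e^{-d(T)}$, with $\#\fctrz(G)$ absorbed into the implied constant. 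The polynomiality of $J^T_{\alfctrs{H}}(h)$ in $T$ is inherited from that of the $J^T_{H'}(h)$. The main obstacle is purely bookkeeping: one must verify that the uniformity in $\Omega$ (rather than in the individual $\Omega \cap H'(\A)^1$) is legitimate, which it is because $\fctrz(G)$ is finite and $\Omega \cap H'(\A)^1 \subset \Omega$; there is no analytic difficulty beyond what is already contained in Lemma \ref{lem: Art3.1gen}.
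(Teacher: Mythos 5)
Your proposal follows the paper's own route: the corollary is deduced by applying Lemma \ref{lem: Art3.1gen} to every $H'\in\fctrz(G)$ with $H'\subset H$ and summing the resulting estimates with the signs $(-1)^{r(H)-r(H')}$, using the definition of $J^T_{\alfctrs{H}}$ as an alternating sum of the $J^T_{H'}$ together with the partition $H(F)=\coprod_{H'\subset H}\alfctrs{H'}(F)$ and the additivity of $\truncJ^T$ over disjoint invariant sets; this is exactly what the paper does, and your choice of $k$, $m$, $d_\Omega$ as maxima over the finite set $\fctrz(G)$ is fine. Two small corrections to your justifications: the inclusion you state is backwards --- for $H'\subset H$ one has $\K(\nnn)\cap H'(\A)^+\subset\K(\nnn)\cap H(\A)^+$, and it is precisely this inclusion (invariance under a group passes to its subgroups) that yields the bi-invariance hypothesis needed for each $H'$; and the inequality $\norm{h|_{H'(\A)^1}}_{k}\le\norm{h}_{k}$ is not literally true for the $L^1$-based norms, since restriction to a lower-dimensional subgroup is not bounded in $L^1$ --- instead one should invoke the equivalence of these norms with the sup-seminorms on $C^\infty_\Omega$ noted in the paper, which makes the restriction map continuous at the cost of an $\Omega$-dependent constant and possibly a larger derivative order $k$, both of which the statement of the corollary permits.
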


\subsection{An estimate for truncated integrals}
Using Corollary \ref{cor: uprbnd}, Theorem \ref{thm: maingeom} reduces to a suitable upper bound for
the truncated integrals $\truncJ_{\alfctrs{H}(F)}^T(h_S \otimes \one_K)$.
Such a bound is provided by the following lemma, which extends the result of \cite[Lemma 3.5]{FLM3} for principal congruence subgroups.

\begin{lemma} \label{truncatedlemma}
There exists $\delta>0$ such that for any $H \in \fctrz(G)$, $H \neq Z_G$, any compact open subgroup $\fixco$ of $G (\A^S)$ and any compact subset $\Omega_S \subset G (F_S)^1$ we have
\begin{equation} \label{truncatedintegral}
\truncJ^T_{\alfctrs{H}(F)}(h_S \otimes \one_K)\ll_{\fixco, \, \Omega_S} \level(K;H^+)^{-\delta}\cdot(1 + \norm{T})^{d_0}\cdot\abs{h_S}_\infty
\end{equation}
for any $h_S\in C_{\Omega_S}(G(F_S)^1)$, any $T$ such that $d(T) \ge d_{\Omega_S\fixco}$, and any open subgroup $K$ of $\fixco$.
\end{lemma}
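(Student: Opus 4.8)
\emph{Strategy and first reduction.} The plan is to estimate $\truncJ^T_{\alfctrs H(F)}(h_S\otimes\one_K)$ directly as a geometric object, following the pattern of \cite[\S3]{FLM3} (in particular the proof of \cite[Lemma 3.5]{FLM3}) and Arthur's analysis of the modified kernel in \cite[\S3]{MR828844}, but with the volume estimate of \cite[\S5]{1308.3604} for the sets $\{k\in\K^S:k^{-1}xk\in K\}$ playing the role that a congruence-point count played in [ibid.] for principal congruence subgroups. As a preliminary step, since $F(\cdot,T)\ge0$ and $\sum_{\gamma\in\alfctrs H(F)}f(x^{-1}\gamma x)\ge0$ whenever $f\ge0$, the functional $f\mapsto\truncJ^T_{\alfctrs H(F)}(f)$ is monotone on non-negative functions, so that $\abs{\truncJ^T_{\alfctrs H(F)}(h_S\otimes\one_K)}\le\abs{h_S}_\infty\truncJ^T_{\alfctrs H(F)}(\one_{\Omega_S}\otimes\one_K)$; it thus suffices to bound $\truncJ^T_{\alfctrs H(F)}(\one_{\Omega_S}\otimes\one_K)$ by $\level(K;H^+)^{-\delta}(1+\norm T)^{d_0}$ with an implied constant depending only on $\fixco$ and $\Omega_S$, whenever $d(T)\ge d_{\Omega_S\fixco}$.

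\emph{The geometric expansion.} Next I would partition $\alfctrs H(F)$ according to the $G(F)$-conjugacy class of the semisimple part of the Jordan decomposition and apply Arthur's geometric expansion from \cite[\S3]{MR828844}, with the bookkeeping made quantitative as in \cite{FLM3}. Since $\one_{\Omega_S}\otimes\one_K$ is supported in the fixed compact set $\Omega_S\times\fixco$, only finitely many semisimple conjugacy classes contribute, and these may be represented by elements $\sigma\in G(F)$ lying in a compact set depending only on $\Omega_S$ and $\fixco$; for each such $\sigma$ the corresponding part of $\truncJ^T_{\alfctrs H(F)}(\one_{\Omega_S}\otimes\one_K)$ is a finite sum over Levi subgroups $M$ of $G_\sigma$ of a polynomial in $T$ of degree at most $d_0$ — with coefficients bounded in terms of $\Omega_S$ and $\fixco$ — times a weighted unipotent orbital integral on $G_\sigma$ of a descent of $\one_{\Omega_S}\otimes\one_K$. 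Splitting each such integral into an $S$-factor, bounded in terms of $\Omega_S$ and $\fixco$ alone, and an $\A^S$-factor, and using the Cartan decomposition of $G(\A^S)$ together with the boundedness of the representatives, the $\A^S$-factor is dominated by a bounded multiple of $\vol_{\K^S}\{k\in\K^S:k^{-1}\gamma_0 k\in K\}$ for bounded representatives $\gamma_0\in\alfctrs H(F)$ of the relevant conjugacy classes. Such $\gamma_0$ are non-central and the normal subgroup of $G^{\ad}$ generated by $\Ad(\gamma_0)$ equals $H^{\ad}$, so the estimate of \cite[\S5]{1308.3604} applies and gives $\vol_{\K^S}\{k\in\K^S:k^{-1}\gamma_0 k\in K\}\ll_{\fixco}\vol(\K^S)\level(K;H^+)^{-\delta}$ uniformly over the relevant $\gamma_0$, with $\delta>0$ depending only on $G$. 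Summing over the finitely many terms one obtains $\truncJ^T_{\alfctrs H(F)}(\one_{\Omega_S}\otimes\one_K)\ll_{\fixco}\level(K;H^+)^{-\delta}\truncJ^T_{\alfctrs H(F)}(\one_{\Omega_S}\otimes\one_{\fixco})$, and the last truncated integral — of a fixed compactly supported function — is $\ll_{\fixco,\Omega_S}(1+\norm T)^{d_0}$ for $d(T)\ge d_{\Omega_S\fixco}$ by Theorem \ref{thm: Arthur3.1} (applied after dominating $\one_{\Omega_S}\otimes\one_{\fixco}$ by a smooth function and using that $J^T$ is polynomial in $T$ of degree $\le d_0$).

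\emph{Main obstacle.} I expect the principal work to lie in carrying out the geometric expansion with sufficient care: one must revisit Arthur's treatment in \cite[\S3]{MR828844} in order (i) to keep explicit control of the polynomial dependence on $T$ (degree $d_0$, coefficients depending only on the supports of the test functions), and (ii) to reduce the $K$-dependence of the weighted unipotent orbital integrals — which a priori involves integration over non-compact conjugation directions and over unipotent orbits — to the single quantity $\vol_{\K^S}\{k\in\K^S:k^{-1}\gamma_0 k\in K\}$ at bounded representatives $\gamma_0$, so that the estimate of \cite[\S5]{1308.3604} can be invoked uniformly. Checking that the normalization of the level in that estimate agrees with the function $\level(\,\cdot\,;H^+)$ used here is a further, routine, point.
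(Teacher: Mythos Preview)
Your proposal has a genuine gap at the step where you claim the $K$-dependence of the weighted unipotent integrals can be reduced to the single quantity $\vol_{\K^S}\{k\in\K^S:k^{-1}\gamma_0 k\in K\}$ at a bounded representative $\gamma_0$, and that this volume is then $\ll\level(K;H^+)^{-\delta}$ uniformly. The estimate of \cite[\S5]{1308.3604} (recorded in the paper as Lemma~\ref{LemmaConjugacyClass}) does \emph{not} give a uniform bound of this shape: for $x\in H(\A^S)$ and $N=\level(K;H^+)=\prod_p p^{n_p}$ it yields
\[
\vol\{k\in\K^S:k^{-1}y^{-1}xyk\in K\}\ll\prod_{p\mid N,\ \lambda_p^H(x)<\delta n_p} p^{-\varepsilon n_p},
\]
so there is no saving at primes $p$ for which $x$ is $p$-adically close to the center (i.e.\ $\lambda_p^H(x)\ge\delta n_p$). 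In the truncated integral the sum runs over infinitely many $\gamma\in\alfctrs H(F)$---in particular over unipotents $\nu\in U(F)$ that can be arbitrarily close to $1$ at any given finite set of primes---and for these the volume bound alone is worthless. Factoring the $\A^S$-part of a \emph{unipotent} orbital integral of $\one_K$ as a single conjugation volume times something independent of $K$ is therefore not possible; indeed already $\int_{U(\A^S)}\one_K(u)\,du=\vol(U(\A^S)\cap K)$ has nothing to do with any $\vol\{k:k^{-1}\gamma_0 k\in K\}$.

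The paper's proof avoids this by not passing through the fine geometric expansion at all. It uses Arthur's Siegel-domain bound \cite[\S5]{MR828844} to dominate $\truncJ^T_{\alfctrs H(F)}$ by $(1+\norm T)^{d_0}$ times a sum over standard parabolics $P=M\ltimes U$ and finitely many $\mu\in M(F)$ of $\sup_a\modulus_P(a)^{-1}\sum_{\nu\in U(F):\,\mu\nu\in\alfctrs H(F)}\phi_\mu(a^{-1}\nu a)$. It then runs a \emph{dichotomy}: partition the $\nu$-sum according to the set $\primes'\subset\primes(N)$ of primes at which $\lambda_p^H(\mu\nu)<\delta n_p$. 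For the primes in $\primes'$ the volume bound gives the factor $N_{\primes'}^{-\varepsilon}$; for the complementary primes one has $(N/N_{\primes'})^\delta\mid\Lambda^H(\mu\nu)$, and a lattice-point count (Corollary~\ref{cor: ltcount}, itself resting on Lemma~\ref{lem: latpntcnt} and \cite[Lemmas~5.25--5.27]{1308.3604}) shows such $\nu$ are rare enough to supply the missing factor $(N/N_{\primes'})^{-\delta'}$. Summing over the $2^{\abs{\primes(N)}}$ choices of $\primes'$ only costs $N^{o(1)}$. This interplay between the conjugation-volume estimate and the lattice-point count is the missing idea in your sketch; without it, the reduction you describe in your ``main obstacle'' paragraph cannot be carried out.
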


Before proving the lemma we quickly explain how it implies Theorem \ref{thm: maingeom}.

\begin{proof}[Proof of Theorem \ref{thm: maingeom}] Let $H \in \fctrz(G)$, $H \neq Z_G$.
Arthur's interpolation argument (cf.~the proof of \cite[Proposition 3.1]{FLM3}, which is modeled after the proof of \cite[Theorem 4.2]{MR828844}),
combined with Corollary \ref{cor: uprbnd} and Lemma \ref{truncatedlemma} implies that for any compact subset $\Omega_S\subset G(F_S)^1$ we have
\[
J_{\alfctrs{H}}(h_S \otimes \one_K) \ll_{\fixco, \Omega_S} \level(K;H^+)^{-\delta}(1+\log\level(K;H^+) + \log \inorm (\nnn_S))^{d_0} \norm{h_S}_k
\]
for all bi-$\K_{S_{\fin}} (\nnn_S)$-invariant functions $h_S\in C^\infty_{\Omega_S} (G (F_S)^1)$ (where $k$ is as in
Corollary \ref{cor: uprbnd} and $\delta$ as in Lemma \ref{truncatedlemma}).
Theorem \ref{thm: maingeom} (with any smaller value of $\delta$) now follows from \eqref{eq: Jnc}.
\end{proof}

We now turn to the proof of Lemma \ref{truncatedlemma}.
We first quote the main result of \cite[\S 5]{1308.3604}.
We recall [ibid., Definition 5.2] the definition of the functions $\lambda_p^H$, where $H \in \fctrz (G)$, $H \neq Z_G$, and $p$ is a prime.
Fix a $\Z$-lattice $\Lambda_0$ in $\Lieg$ (the Lie algebra of $G$)
such that $\Lambda_0 \otimes \hat{\Z}$ is stable under the adjoint action of $\K_0 = \K_{S_{\fin}} \fixco$.
For $x \in G (\A_{\fin})$ set
\[
\lambda^H_p (x) = \max \{ n \in \Z \cup \{ \infty \} \, : \, (\Ad (x_p) - 1) \Pro_{\Lieh'} (\Lambda \otimes \Z_p) \subset p^n (\Lambda \otimes \Z_p)
\text{ for some $\Lieh' \neq 0$ } \},
\]
where $\Lieh'$ ranges over the non-trivial semisimple ideals
of the $\Q_p$-Lie algebra $\Lieh \otimes \Q_p \subset \Lieg \otimes \Q_p$,
and $\Pro_{\Lieh'}$ denotes the corresponding projection $\Lieg \otimes \Q_p \to \Lieh' \subset \Lieg \otimes \Q_p$.
(It is enough to take the simple ideals.)

We also set
\[
\Lambda^H(g)= \prod_{p: \, \lambda_p^H(g) \ge 0} p^{\lambda_p^H(g)}
\]
for all $g\in G(F)-\cup_{H'\in\fctrz(G),H'\not\supset H}H'(F)$.
Note that by \cite[Lemma 5.25]{1308.3604} (applied to the projection of $\Ad(g)$ to $H^{\ad} (F)$), $\Lambda^H(g)$ is well-defined (i.e., finite) under our
restriction on $g$.

It follows from the definition that whenever $H = H_1 \cdots H_r$ with groups $H_1, \dots, H_r \in \fctrz (G)$, we have
\[
\lambda^H_p (x) = \max_{1 \le i \le r} \lambda^{H_i}_p (x)
\]
and therefore
\begin{equation} \label{EqnLambdaLcm}
\Lambda^H (x) = \lcm_{1 \le i \le r} \Lambda^{H_i} (x).
\end{equation}

\begin{lemma} \label{LemmaConjugacyClass}
There exist constants $\varepsilon, \delta>0$ such that
for any open compact subgroup $\fixco$ of $G (\A^S)$ and any compact set $\Gamma \subset G (\A)$ we have the following estimate.
For all $H \in \fctrz (G)$, $H \neq Z_G$, $x \in H (\A^S)$, $y \in \Gamma$, and all open subgroups $K \subset \fixco$
with $N = \level ( K; H^{+} ) = \prod_p p^{n_p}$
we have
\[
\vol \left( \{ k \in \K^S \, : \, k^{-1} y^{-1} x y k \in K \} \right) \ll_{\K^S_0,\Gamma} \prod_{p|N, \, \lambda^H_p (x) < \delta n_p} p^{- \varepsilon n_p}.
\]
\end{lemma}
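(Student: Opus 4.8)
The statement is imported from \cite{1308.3604}, so below I only indicate the shape of the argument I would use to establish it there. The plan is to reduce the global volume estimate to a product of purely local estimates indexed by the primes $p \mid N$, and then to prove each local estimate by an uncertainty-principle argument exploiting the $p$-adic non-degeneracy of $x$ recorded by $\lambda_p^H$.

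\textit{Reductions.} First I would dispose of $\Gamma$ and $y$: by compactness $\Gamma$ is a finite union of cosets of a fixed compact open subgroup of $G(\A)$, so it suffices to treat $y$ in a fixed finite set, and conjugating the reference lattice $\Lambda_0$ by such a $y$ changes it boundedly and reduces us to $y = 1$, i.e. to bounding $\vol(\{k \in \K^S : k^{-1} x k \in K\})$ for $x \in H(\A^S)$. If $x$ is not $\K^S$-conjugate into $\fixco$ this set is empty; otherwise, conjugating by a single $k_0 \in \K^S$ — which replaces $x$ by $k_0^{-1}xk_0$, leaves $K$ and $\level(K;H^+)$ untouched, and affects $\lambda_p^H(x)$ only by a bounded shift (since $\Ad(k_0)$ normalises $\Lieh \otimes \Q_p$, as $H \trianglelefteq G$, and distorts $\Lambda_0 \otimes \hat{\Z}$ only at finitely many primes by a bounded amount) — we may assume $x$ ranges over a fixed bounded set. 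Next, using the multiplicativity relation \eqref{EqnLambdaLcm} together with the compatibility of $H^+$ and the congruence filtration with the decomposition into the $F$-simple factors of $H^{\ad}$, I would reduce to the case that $H$ is $F$-simple, and then pass to the simply connected cover $H^{\SC}$, translating between the defining representation $\rho$ and the adjoint representation at the cost of a bounded loss absorbed into $\varepsilon$ and $\delta$.

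\textit{The local estimate and the role of the approximation principle.} The heart of the matter is to show, for each prime $p \mid N$ with $\lambda_p^H(x) < \delta n_p$, that the volume of those conjugators which push $x$ into the level-$p^{n_p}$ principal congruence subgroup is $\ll p^{-\varepsilon n_p}$; the remaining primes contribute the trivial bound $1$, and one multiplies the estimates over $p \mid N$. The essential difficulty — which is exactly what the approximation principle of \cite{1308.3604} is designed to overcome — is that $K$ is an \emph{arbitrary} open subgroup of $\fixco$, not a product of local groups, so one cannot simply localise. Instead one uses the \emph{minimality} of $N = \level(K;H^+)$: for a prime $\mathfrak{p}$ above $p$ dividing the level ideal $\nnn$, the group $\K^S(\nnn\mathfrak{p}^{-1}) \cap H^+$ is not contained in $K$, so there is $g \in \K^S(\nnn\mathfrak{p}^{-1}) \cap H^+$ with $g \notin K$; such $g$ is congruent to $1$ to order $p^{n_p-1}$ at $p$ and deep at the other primes dividing $N$, and it witnesses that $K$ is not much larger than $\K^S(\nnn) \cap H^+$ near $p$. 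Now suppose, towards a contradiction, that $\vol(\{k : k^{-1}xk \in K\})$ exceeds a constant multiple of $\prod_{p \mid N,\, \lambda_p^H(x) < \delta n_p} p^{-\varepsilon n_p}$. A pigeonhole argument then produces $k_1, k_2$ in this set with $k_2 = k_1 h$, where $h$ is congruent to $1$ at $p$ to order $\gtrsim \varepsilon n_p$ and to top order at the other primes dividing $N$. Writing $z = k_1^{-1} x k_1$, so $\lambda_p^H(z) = \lambda_p^H(x) < \delta n_p$, the relation $z^{-1}h^{-1}zh \in K$, combined with the non-degeneracy of $z$ — $(\Ad(z)-1)$ is non-zero modulo $p^{\delta n_p}$ on some simple ideal of $\Lieh \otimes \Q_p$ — forces $h$, modulo an element of $\K^S(\nnn) \cap H^+ \subset K$, to lie in $K$, contradicting the choice of $g$. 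Quantitatively one tracks how $(\Ad(z)-1)$ enters $z^{-1}h^{-1}zh \approx \exp\!\big((\Ad(z)-1)\log h\big)$ and compares the resulting order of congruence with $n_p$; the $\Theta(n_p)$ gap between $\delta n_p$ and $n_p$ is precisely what produces the exponent $\varepsilon n_p$.

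\textit{Main obstacle.} The serious work is entirely in the previous paragraph: making the commutator / uncertainty-principle argument effective for a general $K$, rather than for principal congruence subgroups as in \cite[Lemma 3.5]{FLM3}, and organising the local analyses at the various primes dividing $N$ so that the final bound retains its product form. Quantifying how the non-degeneracy bound $\lambda_p^H(x) < \delta n_p$ propagates through conjugation and commutators, uniformly in $x$, $K$ and $p$, is the core of the approximation principle of \cite{1308.3604}; by comparison the reductions above, the passage between $\rho$ and $\Ad$, the reduction to $F$-simple simply connected $H$, and the treatment of $\Gamma$ are routine.
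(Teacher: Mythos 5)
Your proposal matches the paper's own proof in the essential respect: the lemma is obtained there by reducing to the main volume estimate of \cite{1308.3604} (Corollary 5.8, applied to $\Res_{F/\Q}H$ and to $y^{-1}xy$ in place of $x$, with $k$ restricted to $\K^S\cap H(\A^S)$), the only point needing verification being exactly the one you isolate, namely that conjugation by elements of the fixed compact set $\Gamma\K^S$ shifts each $\lambda_p^H$ by a bounded amount $A_p$ with $A_p=0$ for almost all $p$. Your further sketch of how the cited estimate itself would be proved (local estimates, minimality of the level, pigeonhole/commutator argument) goes beyond what the paper's proof of this lemma contains and is only heuristic in its final contradiction step, but since the paper treats that estimate as an imported black box, this does not affect the correctness of your reduction, which coincides with the paper's.
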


\begin{proof}
We may assume without loss of generality that $S = S_\infty$.
Clearly, the volume in question is bounded by the supremum of
$\vol \left( \{ k \in \K^S \cap H (\A^S) \, : \, k^{-1} y^{-1} x y k \in K \} \right)$
over $y \in \Gamma \K^S$.
The lemma follows now from \cite[Corollary 5.8]{1308.3604}
(applied to the restriction of scalars $\Res_{F/\Q}H$ of $H$ with respect to $F/\Q$ and to $y^{-1}xy \in H (\A_{\fin})$ instead of $x$),
upon noting that there exist constants $A_p$, with $A_p=0$ for almost all $p$, such that
\[
\abs{\lambda^H_p(y^{-1}xy)-\lambda^H_p(x)}\le A_p
\]
for all primes $p$, $x\in G(\A_{\fin})$, $y\in \Gamma \K^S$.
\end{proof}

Let $U$ be the unipotent radical of a standard parabolic subgroup $P$ of $G$.
We now need two simple counting lemmas for the number of elements of $U(F)$
in compact subsets of $U (\A)$ satisfying a non-degeneracy condition as well as a divisibility condition for the function $\Lambda^H$.
They are easy consequences of \cite[Lemma 3.7]{FLM3}.

In general, for a reductive group $H$ over $F$ we denote by $A_H$ the connected component
of the identity (in the usual topology)
of the group of $\R$-points of the $\Q$-split part of the center of $\Res_{F/\Q}H$, viewed as a subgroup of $H(F_\infty)$.
For a parabolic subgroup $P \supset P_0$ with Levi part $M_P$ write $A_P = A_{M_P}$ and for $T_1 \in \aaa_{M_0}$ set
\[
A_P (T_1) = \{ a \in A_{M_P} \, : \, \alpha(a)>e^{\sprod{\alpha}{T_1}} \ \forall \alpha \in \Delta_0 \}.
\]
As in \cite[p. 941]{MR518111}, we fix once and for all a suitable vector $T_1$,
depending on $G$, $M_0$, $P_0$ and $\K$, such that $G (\A) = U_0 (\A) M_0 (\A)^1 A_{P_0} (T_1) \K$.
Recall the elementary estimate for the number of rational points in conjugates of compact subsets $\Omega \subset U (\A)$, where
$U$ is the unipotent radical of $P$ \cite[Lemma 5.1]{MR828844} (cf. also \cite[Lemma 3.7]{FLM3}):
we have
\begin{equation} \label{basicUestimate}
\abs{U(F) \cap a\Omega a^{-1}} \ll_{\Omega} \modulus_P(a), \quad a\in A_P(T_1).
\end{equation}

\begin{lemma} \label{lem: latpntcnt}
Suppose that $P=M\ltimes U$ is a standard parabolic subgroup of $H \in \fctrz(G)$, $H \neq Z_G$, and $\Omega \subset U (\A)$ a compact set.
Then for every $\epsilon>0$ we have
\begin{equation} \label{eq: ucount}
\abs{\{u\in U(F) \cap\alfctrs{H} (F) \cap a\Omega a^{-1}: Y\divides\Lambda^H(u)\}} \ll_{\epsilon,\Omega} \modulus_P(a)Y^{\epsilon-1}
\end{equation}
for any positive integer $Y$ and any $a\in A_P(T_1)$.
\end{lemma}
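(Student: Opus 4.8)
The plan is to reduce \eqref{eq: ucount}, through a chain of structural simplifications, to the elementary lattice point count \cite[Lemma 3.7]{FLM3} combined with the definition of $\Lambda^H$ from \cite[\S5]{1308.3604}.

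\emph{First reduction: to $P$ proper in every factor, and to $F$-simple $H$.} Write $H^{\ad}=H_1^{\ad}\cdots H_r^{\ad}$ as the product of its $F$-simple factors, so that $H=H_1\cdots H_r$ (an almost direct product), $U=\prod_k U_k$ with $U_k=U\cap H_k$, and similarly $a=\prod_k a_k$ with $a_k\in A_{P_k}(T_1)$ after enlarging $\Omega$ to a product $\prod_k\Omega_k$. If $U_k=1$ for some $k$, then $U$ lies in the product $H'\in\fctrz(G)$ of the remaining factors, which is $\subsetneq H$; hence $U(F)\cap\alfctrs{H}(F)=\emptyset$ and there is nothing to prove, so I may assume all $U_k\ne1$. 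An element $u=\prod_k u_k\in U(F)$ then lies in $\alfctrs{H}(F)\cap a\Omega a^{-1}$ iff $u_k\in U_k(F)\cap a_k\Omega_k a_k^{-1}$ and $u_k\ne1$ for every $k$; moreover $\Lambda^{H_k}(u)=\Lambda^{H_k}(u_k)$ and $\modulus_P(a)=\prod_k\modulus_{P_k}(a_k)$. By \eqref{EqnLambdaLcm}, $Y\divides\Lambda^H(u)$ iff for each prime $p\divides Y$ there is an index $k(p)$ with $p^{v_p(Y)}\divides\Lambda^{H_{k(p)}}(u_{k(p)})$; since there are $\ll_{\epsilon,r}Y^{\epsilon}$ such functions $k(\cdot)$ and, for each one, setting $Y_k=\prod_{k(p)=k}p^{v_p(Y)}$ gives $\prod_kY_k=Y$, the left-hand side of \eqref{eq: ucount} is at most $Y^{\epsilon}$ times $\prod_k$ of the counts of $u_k\in U_k(F)\cap a_k\Omega_k a_k^{-1}$, $u_k\ne1$, with $Y_k\divides\Lambda^{H_k}(u_k)$. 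As the exponent $\epsilon-1$ is common to all factors, bounds $\modulus_{P_k}(a_k)Y_k^{\epsilon-1}$ multiply to $\modulus_P(a)Y^{\epsilon-1}$, and the extra $Y^{\epsilon}$ is absorbed after renaming $\epsilon$. Thus I may assume $H$ is $F$-simple, in which case $\alfctrs{H}(F)=\{u\in H(F):u\ne1\}$.

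\emph{Second step: translating the divisibility condition.} Fix $u\in U(F)$, $u\ne1$, with $Y\divides\Lambda^H(u)$. For each $p\divides Y$, the inequality $\lambda^H_p(u)\ge v_p(Y)$ supplies a nonzero semisimple — without loss of generality $\Q_p$-simple, cf.\ \cite[\S5]{1308.3604} — ideal $\Lieh'_p$ of $\Lieh\otimes\Q_p$ with $(\Ad(u_p)-\Id)\Pro_{\Lieh'_p}(\Lambda_0\otimes\Z_p)\subseteq p^{v_p(Y)}(\Lambda_0\otimes\Z_p)$. Since a parabolic subgroup of a product is a product of parabolic subgroups, over $\Q_p$ the group $U$ decomposes compatibly with the $\Q_p$-simple factors of $H^{\ad}$; using the logarithm map on $U$ (a polynomial map, $U$ being nilpotent of bounded class) together with the injectivity of $\ad$ on $\Lie U\subseteq\Lieh^{\der}$, all denominators remaining bounded independently of $p$ and $u$, this condition forces $\log u$ into a $\Z_p$-sublattice of $\Lie U\otimes\Z_p$ whose index is a multiple of $p^{v_p(Y)}$ up to a bounded factor, and which is a sum of root-coordinate lines (for the $F$-split maximal torus) scaled by powers of $p$. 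Combining over $p\divides Y$ by the Chinese remainder theorem, and summing over the $\ll_{\epsilon}Y^{\epsilon}$ choices of the $\Lieh'_p$, one is reduced to counting $u\in U(F)\cap a\Omega a^{-1}$ with $\log u$ in a fixed $\intgr_F$-sublattice $\Lambda^{*}=\bigoplus_{\beta}D_{\beta}(\intgr_F X_{\beta})$ of the root lattice of $\Lie U$, the ideals $D_{\beta}$ satisfying $\prod_{\beta}\Nm(D_{\beta})\gg Y^{1-\epsilon}$ — each $p\divides Y$ contributing $p^{v_p(Y)}$ (up to a bounded factor) to at least one $D_{\beta}$, precisely because $P$ is proper in each factor.

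\emph{Conclusion.} Applying \cite[Lemma 3.7]{FLM3} to the root-coordinate functions $z_{\beta}$ on $\Lie U$ under the divisibilities $D_{\beta}\divides z_{\beta}$ yields
\[
\abs{\{u\in U(F)\cap a\Omega a^{-1}:\log u\in\Lambda^{*}\}}\ll_{\epsilon,\Omega}\modulus_P(a)\Bigl(\prod_{\beta}\Nm(D_{\beta})\Bigr)^{\epsilon-1}\ll_{\epsilon,\Omega}\modulus_P(a)Y^{\epsilon-1}
\]
(after renaming $\epsilon$, and noting that for bounded $Y$ the bound is immediate from \eqref{basicUestimate}). Summing over the $\ll_{\epsilon}Y^{\epsilon}$ choices made in the two reduction steps and renaming $\epsilon$ once more finishes the proof. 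The hard part is the second step: one must pass from divisibility of $\Lambda^H(u)$, defined in \cite{1308.3604} through the $\Q_p$-simple ideals of $\Lieh\otimes\Q_p$, to a clean sublattice condition on $\log u$ of controlled index, carefully tracking the bounded $p$-adic corrections and the compatibility with the torus so that \cite[Lemma 3.7]{FLM3} applies — the proof of that lemma being where the potential boundary contributions for ``thin'' boxes are absorbed, using crucially that $u\ne1$. The remaining steps are routine.
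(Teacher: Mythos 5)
Your overall skeleton matches the paper's: reduce to $F$-simple $H$ using \eqref{EqnLambdaLcm} and a divisor-type count over the ways of distributing $Y$ among the factors, then convert $Y\divides\Lambda^H(u)$ into an arithmetic lattice condition on $\log u$ and count with \cite[Lemma 3.7]{FLM3}. The first reduction is fine. The gap is in your second step, precisely at the point you call the hard part. After extracting, for each $p\divides Y$, a simple ideal $\Lieh'_p$ and a divisibility on the corresponding component of $\log u$, you retain from this only that $\log u$ lies in a split sublattice $\Lambda^{*}=\bigoplus_{\beta}D_{\beta}(\intgr_F X_{\beta})$ whose \emph{index} satisfies $\prod_{\beta}\Nm(D_{\beta})\gg Y^{1-\epsilon}$, and you then assert
$\abs{\{u\in U(F)\cap a\Omega a^{-1}:\log u\in\Lambda^{*}\}}\ll\modulus_P(a)\bigl(\prod_{\beta}\Nm(D_{\beta})\bigr)^{\epsilon-1}$.
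That inequality is false in general, and \cite[Lemma 3.7]{FLM3} does not give it: with two root directions, $D_{\beta_0}$ of huge norm $N$ on the first and trivial divisibility on the second, and $a$ such that the $\beta_0$-side of the box $a\Omega a^{-1}$ is bounded while the other side is long, the set contains all points with $z_{\beta_0}=0$ and $z_{\beta_1}\ne0$, so its cardinality is comparable to the long side, which can exceed $\modulus_P(a)N^{\epsilon-1}$ by a factor of order $N$. Excluding $u\ne1$ does not help, since individual root coordinates of $\log u$ may vanish; the ``thin box'' problem is not absorbed when only the product of the $\Nm(D_\beta)$ is remembered.

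What saves the argument — and what \cite[Lemma 3.7]{FLM3} actually handles — is that the divisibility must be kept as a \emph{content-ideal} condition: the same large ideal divides every coordinate of the nonzero vector, so whichever coordinate is nonzero carries the full saving. Concretely, the paper writes $H^{\ad}=\Res_{E/F}H_1$, uses the resulting $E$-vector space structure on $\mathfrak{u}$ and an $\intgr_E$-lattice $L$, and invokes \cite[Lemma 5.27]{1308.3604}, which gives a fixed integer $D$ with $\Lambda^H(\exp X)\divides D\,\tilde\Lambda(X)$, where $\tilde\Lambda(X)$ is the norm of the smallest ideal $\idl$ of $\intgr_E$ with $X\in\idl L$; then $Y\divides\Lambda^H(u)$ forces $Y_2=Y/\gcd(Y,D)$ to divide $\tilde\Lambda(\log u)$, \cite[Lemma 3.7]{FLM3} bounds the count of such nonzero $X$ by $\modulus_P(a)Y_2^{-1}$ times the number of ideals of $\intgr_E$ of norm $Y_2$, and the divisor bound supplies the $Y^\epsilon$. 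Your per-prime analysis (one simple ideal per $p$, i.e.\ one place $w\divides p$ of $E$, with all coordinates of that $w$-slice divisible and that slice nonzero because $u$ is rational and nontrivial) is in fact exactly this structure; the flaw is that you then discard it by passing to independent divisibilities on separate $\intgr_F$-lines with only the index controlled. As written, the proof does not go through; either reinstate the content-ideal formulation over $\intgr_E$ (equivalently, quote \cite[Lemma 5.27]{1308.3604} as the paper does), or prove a genuinely stronger counting lemma than \cite[Lemma 3.7]{FLM3}, which your counterexample shows cannot hold in the generality you use it.
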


\begin{proof}
Suppose first that $H^{\ad}$ is $F$-simple.
Then $U(F) \cap \alfctrs{H} (F) =U(F)-\{1\}$.
Write $H^{\ad}=\Res_{E/F}H_1$, where $E$ is a finite extension of $F$, $H_1$ is defined over $E$ and absolutely simple.
Let $\mathfrak{u} = \Lie U$, so that $\exp\mathfrak{u} = U(F)$, and note that $\mathfrak{u}$ has the structure of an $E$-vector space.
Since $\Omega$ is compact, there exists
an $\intgr_E$-lattice $L$ of $\mathfrak{u}$ such that $U (F_\infty) \Omega \subset U (F_\infty) \exp (L \otimes \hat{\Z})$.
For any $0\ne X\in L$ let $\idl(X) \neq 0$ be the smallest ideal $\idl$ of $\intgr_E$ with $X\in\idl L$ and let $\tilde\Lambda(X)= N(\idl(X))$.
By \cite[Lemma 5.27]{1308.3604} there exists a positive integer $D$ such that $\Lambda^H(\exp X) \divides D \tilde\Lambda(X)$
for any $0\ne X\in L$. Therefore
\[
\{1 \ne u \in U(F) \cap a \Omega a^{-1}:Y\divides\Lambda^H(u)\} \subset
\exp ( \{0\ne X\in\mathfrak{u} \cap \Ad(a)\omega:Y_2 \divides\tilde\Lambda(X)\} )
\]
for $Y_2 = Y / \gcd (Y, D)$ and a suitable compact subset $\omega$ of $\mathfrak{u} \otimes \A$.
It follows from \cite[Lemma 3.7]{FLM3} that
\[
\abs{\{0\ne X\in\mathfrak{u} \cap \Ad(a)\omega:Y_2\divides\tilde\Lambda(X)\}} \ll_{\omega} \abs{\{I \subseteq \intgr_E: \, N(I) =Y_2\}} \modulus_P(a)Y_2^{-1}.
\]
Since for every $\epsilon>0$ we have
\[
\abs{\{I \subseteq \intgr_E: \, N(I) =Y_2 \}}\ll_\epsilon Y^\epsilon,
\]
we obtain the estimate \eqref{eq: ucount} in the $F$-simple case.

In general we can decompose $H^{\ad}$ as the direct product of its $F$-simple factors $H^{\ad}_1, \dots, H^{\ad}_r$.
Then $U = \prod_{i=1}^r U_i$ and $U (F) \cap \alfctrs{H} (F) = \prod_{i=1}^r (U_i (F) - \{ 1 \})$.
There exist compact sets $\Omega_i \subset U_i (\A)$ with $\Omega \subset \prod_{i=1}^r \Omega_i$. By \eqref{EqnLambdaLcm}, we have then
\begin{multline*}
\{u = u_1 \cdots u_r \in\alfctrs{U}(F)\cap a\Omega a^{-1}: Y\divides\Lambda^H(u)\}\subset\\
\bigcup_{(Y_1, \dots, Y_r): \, Y_1 \cdots Y_r = Y} \prod_{i=1}^r \{1\ne u_i \in U_i (F)\cap a\Omega_i a^{-1}:Y_i \divides \Lambda^{H_i}(u_i) \}.
\end{multline*}
Since the number of factorizations $(Y_1, \dots, Y_r)$ of $Y$ is $\ll_\eta Y^\eta$ for any $\eta>0$, the lemma follows from the previous case.
\end{proof}

\begin{corollary} \label{cor: ltcount}
Let $H\in\fctrz(G)$, $H \neq Z_G$, $P=M\ltimes U$ a standard parabolic subgroup of $G$ defined over $F$, $\Omega\subset U(\A)$ compact and $m\in M(F)$. Then
for any $\epsilon > 0$ we have
\begin{equation} \label{eq: bndU}
\abs{\{u\in U(F)\cap a\Omega a^{-1}: mu\in \alfctrs{H}(F), Y\divides\Lambda^H(mu) \}}\ll_{\Omega,m,\epsilon} \modulus_P(a)Y^{\epsilon - 1}
\end{equation}
for all positive integers $Y$ and $a\in A_{P} (T_1)$.
\end{corollary}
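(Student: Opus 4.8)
The plan is to reduce \eqref{eq: bndU} to Lemma~\ref{lem: latpntcnt} by first stripping off the ``Levi part'' $m$ by a purely group-theoretic argument, and then observing that in the two remaining cases the function $\Lambda^H$ is either unchanged or bounded.

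\emph{Step 1 (reduction to $m\in(M\cap H)(F)$ and to the parabolic $P\cap H$ of $H$).} If the set in \eqref{eq: bndU} is empty there is nothing to prove, so fix an element $u$ of it; then $mu\in\alfctrs{H}(F)\subset H(F)$. Let $\pi\colon G\to G^{\ad}/H^{\ad}$ be the quotient map, so that $\ker\pi=H$ and $\pi(P)=\pi(M)\ltimes\pi(U)$ is a parabolic subgroup of $G^{\ad}/H^{\ad}$ with Levi factor $\pi(M)$ and unipotent radical $\pi(U)$. From $\pi(m)\pi(u)=\pi(mu)=1$ with $\pi(m)\in\pi(M)$, $\pi(u)\in\pi(U)$ and $\pi(M)\cap\pi(U)=\{1\}$, we obtain $\pi(m)=\pi(u)=1$, i.e.\ $m\in(M\cap H)(F)$ and $u\in(U\cap H)(F)$; the same reasoning applies to every element of the set. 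Hence, writing $M_H=M\cap H$, $U_H=U\cap H$ and $P_H=P\cap H=M_H\ltimes U_H$ — a standard parabolic subgroup of $H$ — we may assume $m\in M_H(F)$, and it suffices to bound the number of $u\in U_H(F)\cap a\Omega a^{-1}$ with $mu\in\alfctrs{H}(F)$ and $Y\divides\Lambda^H(mu)$, where (after replacing $\Omega$ by $\Omega\cap U_H(\A)$) $\Omega$ is now a compact subset of $U_H(\A)$ and $a$ normalizes $U_H$.

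\emph{Step 2 (reduction to $H^{\ad}$ $F$-simple).} This proceeds exactly as in the proof of Lemma~\ref{lem: latpntcnt}. Decomposing $H$ as the almost direct product of its $F$-simple factors $H_1,\dots,H_r$, we have $U_H=\prod_i U_{H_i}$, $\modulus_{P_H}=\prod_i\modulus_{P\cap H_i}$, and, by \eqref{EqnLambdaLcm}, $\Lambda^H(mu)=\lcm_i\Lambda^{H_i}(mu)$; moreover, writing $u=u_1\cdots u_r$ with $u_i\in U_{H_i}(F)$ and choosing compact sets $\Omega_i\subset U_{H_i}(\A)$ with $\Omega\subset\prod_i\Omega_i$, the factor $\Lambda^{H_i}(mu)$ and the condition that the image of $mu$ in $H_i^{\ad}$ be non-trivial (which is forced by $mu\in\alfctrs{H}(F)$) depend only on $u_i$ and on the fixed element $m$. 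Using that $Y\divides\lcm_i a_i$ places $u$ in one of at most $\ll_{r,\epsilon}Y^{\epsilon}$ sets of the form $\{u:Y_i\divides a_i\ \forall i\}$ with $Y_1\cdots Y_r=Y$, and the elementary estimate $\modulus_{P\cap H}(a)\ll\modulus_P(a)$ for $a\in A_P(T_1)$, the assertion reduces to bounding, for each $i$, the number of $u_i\in U_{H_i}(F)\cap a\Omega_i a^{-1}$ with non-trivial image of $mu$ in $H_i^{\ad}$ and $Y_i\divides\Lambda^{H_i}(mu)$ by $\ll_{\Omega_i,m,\epsilon}\modulus_{P\cap H_i}(a)Y_i^{\epsilon-1}$. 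Thus we may assume $H^{\ad}$ is $F$-simple.

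\emph{Step 3 (the $F$-simple case).} Now $\alfctrs{H}(F)=H(F)\setminus Z_G(F)$, and $mu\in\alfctrs{H}(F)$ says precisely that the image $\bar m$ of $m$ in $H^{\ad}(F)$ is non-trivial, or else $\bar m=1$ and $u\ne1$. If $\bar m=1$ — equivalently $m\in Z_G(F)$ — then $\Ad(m)=1$ on $\Lieg$, hence $\Lambda^H(mu)=\Lambda^H(u)$ and $mu\in\alfctrs{H}(F)\iff u\ne1$, so the count is exactly the one bounded in Lemma~\ref{lem: latpntcnt}. If $\bar m\ne1$, I claim $\Lambda^H(mu)$ is bounded by a constant depending only on $m$ (and $G$, $\Lambda$): if $\lambda_p^H(mu)=n$, then $\Ad((mu)_p)-1$ carries the relevant $\Z_p$-lattice into $p^n$ times itself, and applying to this either the $\Ad$-equivariant projection $P_H\to M_H$ (defined over $F$, hence with $p$-adic denominator $0$ for almost all $p$) or the restriction to $\Lie U_H$ shows that the fixed non-trivial semisimple operator $\Ad(\bar m)$ — which acts non-trivially on $\Lie M_H$, or on $\Lie U_H$ if $\bar m\in Z(M_H)$, since $H^{\ad}$ is adjoint — would be $p$-adically congruent to $1$ modulo $p^{n-O(1)}$; this bounds $n$, the bound being $0$ for almost all $p$. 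Consequently $Y\divides\Lambda^H(mu)$ forces $Y$ bounded, and then the count is at most $\#\{u\in U_H(F)\cap a\Omega a^{-1}\}\ll_{\Omega}\modulus_{P_H}(a)\ll_{\Omega,m,\epsilon}\modulus_{P_H}(a)Y^{\epsilon-1}$ by \eqref{basicUestimate}. Combining the two cases yields \eqref{eq: bndU}.

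\emph{Main obstacle.} Steps 1 and 2 are formal bookkeeping. The one genuine point is the boundedness of $\Lambda^H(mu)$ when $\bar m\ne1$: one must make precise the implication ``$\Ad(mu)$ $p$-adically close to $1$ $\Rightarrow$ $\bar m$ $p$-adically close to $1$'' with a loss of precision that is bounded uniformly in $p$ and vanishes for almost all $p$, coming from the denominators of the parabolic projection, from comparing $\Ad$-smallness with the norm implicit in the definition of $\lambda_p^H$, and from passing to the semisimple part of $mu$ in its Jordan decomposition inside $P_H$. This is of the same nature as the estimates of \cite[\S 5]{1308.3604} and can be extracted from them.
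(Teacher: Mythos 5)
Your proposal is correct in outline and follows essentially the same route as the paper: reduce to $m\in M_H(F)$, $u\in U_H(F)$ via the Levi decomposition of $P_H=P\cap H$ (the paper phrases this as ``if $m\notin H(F)$ the estimate is trivial''), split $H$ according to where $m$ acts nontrivially, note that there $\Lambda$ is bounded uniformly in $u$, and use \eqref{basicUestimate} together with Lemma \ref{lem: latpntcnt} on the complementary part. Two points of comparison. First, instead of your factor-by-factor splitting into $F$-simple factors (which costs a harmless extra factorization count $\ll Y^{\epsilon}$), the paper splits once: it takes $H_1\in\fctrz(G)$ with $m\in\alfctrs{H_1}(F)$, writes $H=H_1H_2$ with $H_1\cap H_2=Z_G$, so that by \eqref{EqnLambdaLcm} the condition $Y\divides\Lambda^H(mu_1u_2)$ forces $Y_2=Y/\gcd(Y,Y_1)\divides\Lambda^{H_2}(u_2)$, and then applies \eqref{basicUestimate} to $U_1$ and Lemma \ref{lem: latpntcnt} to $(H_2,U_2)$; your per-factor version is equivalent, modulo the bookkeeping that after the reduction only the image of $m$ in $H_i^{\ad}(F)$ (not an $F$-rational component of $m$ in $H_i$) is available, which is all your Step 3 actually uses. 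Second, the one genuine input you sketch in Step 3 --- boundedness of $\Lambda^{H}(mu)$, uniformly in $u$ with finite part in a fixed compact set, when the image of $m$ in the $F$-simple $H^{\ad}$ is nontrivial --- is precisely \cite[Lemma 5.26]{1308.3604}, which the paper cites rather than reproves. Your sketch of it is not complete as written: restricting $\Ad(mu)-1\equiv 0\ (\mathrm{mod}\ p^{n})$ to $\Lie U_H$ only yields $\Ad(m)\equiv\Ad(u)^{-1}$ there, so to eliminate the unknown unipotent factor one must compare characteristic polynomials (equivalently, use a $P_H$-stable filtration of $\Lie U_H$ on whose graded pieces $\Ad(u)$ acts trivially, so that the eigenvalues of $\Ad(mu)$ on $\Lie U_H$ are the fixed numbers $\alpha(\bar m)$); moreover $\Ad(\bar m)$ is semisimple only in the branch where it acts trivially on $\Lie M_H$ (then $\bar m$ lies in the center of the Levi of $H^{\ad}$), which is in fact the only branch where semisimplicity is needed, and the resulting constant should be allowed to depend on $\Omega$ as well as on $m$. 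With the citation of \cite[Lemma 5.26]{1308.3604} (or with these repairs) your argument is complete and agrees in substance with the paper's proof.
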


\begin{proof}
Write $P_H=P\cap H$, $M_H=M\cap H$, $U_H=U\cap H$, and note that then $P_H=M_H\ltimes U_H$.
Thus, if $m\notin H(F)$ then the estimate is trivial.
Therefore we can assume that $m\in M_H(F)$ and consider only $u \in U_H(F) \cap a \Omega a^{-1}$ in \eqref{eq: bndU}.

Let $H_1\in\fctrz(G)$ be such that $m\in\alfctrs{H_1} (F)$. Clearly, we have $H_1 \subset H$.
By \cite[Lemma 5.26]{1308.3604}, there exists a positive integer $Y_1$ with
$\Lambda^{H_1}(mu) \divides Y_1$ independently of $u\in U_H(F)\cap U_H(F_\infty) \Omega$.
In particular, \eqref{eq: bndU} clearly holds if $H_1 = H$, and we may therefore assume from now on that $H_1$ is a proper subgroup of $H$.
Factor $H$ as $H=H_1H_2$ with $H_2\in\fctrz(H)$ and $H_1\cap H_2=Z_G$, which implies that
$U_H$ is the direct product of $U_1$ and $U_2$.
Then $U_2\cap\alfctrs{H_2}$ is the set of all elements of $U_2$ which are non-trivial in every $F$-simple coordinate of $H_2$, and
therefore $mu_1u_2\in \alfctrs{H}(F)$ if and only if $u_2\in U_2 (F) \cap \alfctrs{H_2}(F)$.
Also, $\Lambda^H (m u_1 u_2)$ is by \eqref{EqnLambdaLcm} the least common multiple of $\Lambda^{H_1} (m u_1)$ and $\Lambda^{H_2} (u_2)$.
Therefore for suitable compact subsets $\Omega_i \subset U_i (\A)$
the set
\[
\{ u_1 u_2 \in U_H (F) \cap a \Omega a^{-1} : m u_1 u_2 \in \alfctrs{H} (F), \ Y \divides \Lambda^H (m u_1 u_2) \}
\]
is contained in the set
\[
\{ u_1 u_2 : u_1 \in U_1 (F) \cap a \Omega_1 a^{-1}, \ u_2 \in U_2 (F) \cap \alfctrs{H_2}(F) \cap a \Omega_2 a^{-1}, \ Y_2 \divides \Lambda^{H_2} (u_2) \}
\]
for $Y_2 = Y / \gcd (Y, Y_1)$.

The corollary follows by combining \eqref{basicUestimate} for $P_1 \subset H_1$ with Lemma \ref{lem: latpntcnt} applied to $H_2$ and its
parabolic subgroup $P_2$.
\end{proof}

\begin{proof}[Proof of Lemma \ref{truncatedlemma}]
Adapting Arthur's discussion in \cite[\S 5]{MR828844} to the current situation,
we can bound $\truncJ^T_{\alfctrs{H}(F)}(h_S \otimes \one_K)$ up to a constant
by $(1 + \norm{T})^{d_0}$ times
\begin{equation} \label{eq: art1}
\sup_{a \in A_{P_0} (T_1)} \modulus_{P_0} (a)^{-1} \sum_{\gamma \in \alfctrs{H}(F)} \phi (a^{-1} \gamma a),
\end{equation}
where
\[
\phi (x) = \sup_{y\in\Gamma}\int_{\K} \abs{(h_S \otimes \one_K)(k^{-1}y^{-1}xyk)}\ dk
\]
for a compact set $\Gamma \subset G (\A)^1$ depending only on $G$, $P_0$ and $\K$.
Furthermore, \eqref{eq: art1} is bounded by the sum over standard parabolic subgroups $P=M\ltimes U$
and $\mu\in M(F)$ of
\[
\sup_{a\in A_{P} (T_1)}\modulus_P (a)^{-1} \sum_{\nu \in U(F): \, \mu\nu\in \alfctrs{H}(F)} \phi_\mu (a^{-1} \nu a),
\]
where
\[
\phi_\mu (u) = \sup_{b \in B} \modulus_{P_0} (b)^{-1} \phi (b^{-1} \mu u b), \quad u \in U_P (\A),
\]
for a fixed compact set $B \subset A_0$.
In particular, for a given $P$, $\mu$ is confined to a finite subset of $M(F)$ that depends only on $\Omega_S$.

Fix $\mu\in M(F)$. Let $N=\level(K;H^+)$ and let $\primes(N)$ be the set of prime divisors of $N$.
Let $\primes' \subset \primes(N)$ be an arbitrary subset of $\primes (N)$ and
write $N_{\primes'}=\prod_{p\in\primes'}p^{v_p(N)}$.

For any $\nu\in U(F)$ let
\[
A(N, H, \mu\nu)=\{p\in \primes (N) :\lambda_p^H(\mu\nu) < \delta v_p(N)\} \subset \primes (N).
\]
By Lemma \ref{LemmaConjugacyClass} we have
\[
\phi_\mu(a^{-1}\nu a)\ll\abs{h_S}_\infty\cdot N_{\primes'}^{-\varepsilon} \quad \text{if} \quad A (N, H, \mu\nu) = \primes'.
\]
It follows that for a suitable compact set $\Omega \subset U (\A)$ (depending on $\Omega_S$) we have:
\begin{multline*}
\sum_{\nu \in U(F): \, \mu\nu\in \alfctrs{H}(F), \ A(N, H, \mu\nu)=\primes'} \phi_\mu (a^{-1} \nu a)\ll\\
\abs{h_S}_\infty\cdot N_{\primes'}^{-\varepsilon}\cdot\abs{\{\nu \in U(F)\cap a\Omega a^{-1}: \, \mu\nu\in \alfctrs{H}(F) ,A(N, H, \mu\nu)=\primes'\}}.
\end{multline*}
On the other hand, for $A(N, H, \mu\nu)=\primes'$ clearly
\[
(N / N_{\primes'})^\delta \le \prod_{p \notin \primes'} p^{\lceil \delta v_p (N) \rceil} \divides \Lambda^H (\mu\nu).
\]
Thus, using Corollary \ref{cor: ltcount} we have for any $0 < \delta' < \min (\varepsilon,\delta)$:
\[
\sum_{\nu \in U(F): \, \mu\nu\in \alfctrs{H}(F), \ A(N, H, \mu\nu)=\primes'} \phi_\mu (a^{-1} \nu a)
\ll_{\mu,\Omega} \modulus_P (a) \abs{h_S}_\infty N_{\primes'}^{-\varepsilon}(N/N_{\primes'})^{-\delta'} \le \modulus_P (a) \abs{h_S}_\infty N^{-\delta'}.
\]
Hence, summing over all possibilities for $\primes' \subset \primes (N)$ we obtain that
\[
\sum_{\nu \in U(F): \, \mu\nu\in \alfctrs{H}(F)} \phi_\mu (a^{-1} \nu a)\ll_{\mu,\Omega}
\abs{\{\primes':\primes'\subset\primes (N) \}} \modulus_P(a)
\abs{h_S}_\infty N^{-\delta'}
\ll\modulus_P(a) \abs{h_S}_\infty N^{- \delta''}
\]
for any $0 < \delta'' < \delta'$.
The lemma follows.
\end{proof}

\subsection{Proof of Lemma \ref{lem: Art3.1gen}} \label{ProofLemma}
To conclude the proof of Theorem \ref{thm: maingeom} it remains to prove Lemma \ref{lem: Art3.1gen}.

\begin{proof}
Let $P_0^{(H)}$ be the normalizer in $G$ of $P_0\cap H$.
Thus $P_0^{(H)}$ is the largest parabolic subgroup of $G$ such that $P_0^{(H)}\cap H=P_0\cap H$.
The map $P\mapsto P\cap H$ defines a one-to-one correspondence, preserving unipotent radicals, between the
parabolic subgroups of $G$ containing $P_0^{(H)}$ and the standard parabolic subgroups of $H$. The inverse map takes $Q$ to its normalizer in $G$.
Moreover, $P\supset P_0^{(H)}$ if and only if the radical $N_P$ of $P$ is contained in $H$.

For any $f\in C_c(H(\A)^1)$ we define the modified kernel with respect to $H$ by
\[
k_{(H)}^T(x,f)=\sum_{P:P_0^{(H)}\subset P \subset G}(-1)^{\dim\aaa_P^G}\sum_{\delta\in P(F) \bs G (F)}K_P^{(H)}(\delta x,\delta x)
\hat\tau_P(H_P(\delta x)-T), \ \ x\in G (\A),
\]
where
\[
K_P^{(H)}(x,x)=\sum_{\gamma\in H(F)\cap M_P(F)}\int_{N_P(\A)}f(x^{-1}\gamma n x)\ dn
\]
and $\aaa_P^G$, $H_P$ and $\hat\tau_P$ are as in \cite{MR518111}.
We recall that only finitely many terms (depending on the support of $f$) are non-zero in the sums above.
Note that in the case $H=G$ the function $k_{(H)}^T(x,f)$ coincides with the modified kernel $k^T(x,f)=\sum_{\cls}k_{\cls}^T(x,f)$ defined in
\cite[(6.1)]{MR2192011} (following \cite{MR518111}).

As in the case $H=G$ we claim that for any $f\in C_c^\infty(H(\A)^1)$ the integral
\[
J_H^T(f):=\int_{G(F)\bs G(\A)^1}k_{(H)}^T(x,f)\ dx
\]
is absolutely convergent and that the estimate \eqref{eq: modH} holds.
These facts are proved along the same lines as \cite[Theorem 3.1]{MR828844},
which is based on the proof of \cite[Theorem 7.1]{MR518111}.
Since the modifications are mostly straightforward we only point out the differences.
In the analysis of \cite[pp. 942-945]{MR518111} we have to take into account that we sum only over $P\supset P_0^{(H)}$.
As a result $P_1(\Q)\cap M(\Q)\cap\cls$ at the bottom of [ibid., p. 944] is to be replaced by $P_1(F)\cap M(F)\cap H(F)$
which is equal to $(M_{\tilde P_1}(F)\cap H(F))N_{\tilde P_1}(F)$ where $\tilde P_1$ is the (parabolic) subgroup generated
by $P_1$ and $P_0^{(H)}$. In the ensuing discussion $N_1^P(\Q)$ and $\mathfrak{n}_1^P(\Q)$ are to be replaced by
$N_{\tilde P_1}^P(F)$ and $\mathfrak{n}_{\tilde P_1}^P(F)$ respectively.
This results in the following changes in the proof of \cite[Theorem 3.1]{MR828844} (pp.~1245-1249):
\begin{enumerate}
\item The unipotent variety $\mathcal{U}_G(\Q)$ is replaced by $H(F)$ and $\mathcal{U}_1(\Q)$ by $M_{P_1}(F)\cap H(F)=M_{\tilde P_1}(F)\cap H(F)$.
\item The sum before formula (3.1) will now be over $\{P_1,P_2:P_1\subsetneq P_2, P_2\supset P_0^{(H)}\}$.
\item From formula (3.1) onward $\mathfrak{n}_{P_1}^{P_2}(\Q)'$ is replaced by $\mathfrak{n}_{\tilde P_1}^{P_2}(F)'$ where
$\mathfrak{n}_{\tilde P_1}^{P_2}$ is the Lie algebra of $N_{\tilde P_1}\cap M_{P_2}$ and
\[
\mathfrak{n}_{\tilde P_1}^{P_2}(F)'=\mathfrak{n}_{\tilde P_1}^{P_2}(F)-\bigcup_{\tilde P_1\subset P\subsetneq P_2}
\mathfrak{n}_{\tilde P_1}^{P}(F).
\]
\item Similarly $\mathfrak{n}_{P_1}$ is replaced by $\mathfrak{n}_{\tilde P_1}$ throughout (including in the definition of
$\Phi_m(y,Y)$).
\item On p. 1246 $\modulus_{P_1}$ is replaced by $\modulus_{\tilde P_1}$ and $\modulus_{P_0}^{P_1}(a)$ by
$\modulus_{P_0}^{\tilde P_1}(aa')=\modulus_{P_0}^{P_1}(a)\modulus_{P_1}^{\tilde P_1}(aa')$.
\item There is an extra $\modulus_{P_1}^{\tilde P_1}(aa')^{-1}$ in formulas (3.3) and (3.5).
\item In the definition of $S$ at the bottom of p. 1247 and later on $\Delta_{P_0}^{P_1}$ is replaced by $\Delta_{P_0}^{\tilde P_1}$.
\end{enumerate}
The rest of the argument of \cite[\S3]{MR828844} is still valid. The point is that the extra factor $\modulus_{P_1}^{\tilde P_1}(aa')^{-1}$
compensates for the fact that the integers $k_\alpha$ on p.~1248 are only guaranteed to be positive for $\alpha\not\in\Delta_{P_0}^{\tilde P_1}$.
Moreover, the definition of $N(f)$ on p.~1247 (in the slightly modified setup) makes it clear that it suffices to assume that
$f \in C^\infty_\Omega(H(\A)^1)$ is bi-invariant under $\K(\nnn)\cap H(\A)^+$ for \eqref{eq: modH} to hold.

Finally, the argument of \cite[\S9]{MR2192011} (or \cite[\S2]{MR625344}) applies without change (except for replacing $P_0$ by $P_0^{(H)}$)
to show that $J_{H}^T(f)$ is a polynomial in $T$ for $d(T) \ge d_\Omega$.
\end{proof}

\section{The spectral limit property} \label{sec: spectral limit property}

We now turn to the spectral side of the trace formula, establish the spectral limit property
and finish the proof of Theorem \ref{MainTheorem}. For this, we use again the
group-theoretic estimates of \cite[\S 5]{1308.3604}, and combine them with the strategy of \cite[\S 7]{FLM3}. As in [ibid.], the proof proceeds in two stages.
In the first stage we prove for each proper Levi subgroup $M$ of $G$
a different property of the family of all open subgroups of a compact open subgroup of $M(\A_{\fin})$.
This intermediate statement is proved by induction over the semisimple rank of $G$. 
It is then used to derive the spectral limit property for the group $G$.
We first introduce the necessary notation and recall the appropriate inductive property.

\subsection{Polynomially bounded collections of measures} \label{SectionPB}
The technical concept of polynomial boundedness was introduced in \cite{FLM3}, following the work of Delorme \cite{MR860667}.
To recall this notion, we first introduce some notation.

Let $\theta$ be the Cartan involution of $G (F_\infty)$ defining $\K_\infty$.
It induces a Cartan decomposition $\mathfrak{g}_\infty = \Lie G (F_\infty) = \mathfrak{p} \oplus \mathfrak{k}$ with $\mathfrak{k} = \Lie \K_\infty$.
We fix an invariant bilinear form $B$ on $\mathfrak{g}_\infty$ which is positive definite on $\mathfrak{p}$ and negative definite on $\mathfrak{k}$.
This choice defines a Casimir operator $\Omega$ on $G(F_\infty)$,
and we denote the Casimir eigenvalue of any $\pi \in \Pi (G (F_\infty))$ by $\lambda_\pi$.
Fix a maximal abelian subalgebra $\aaa$ of $\mathfrak{p}\cap\Lie G(F_\infty)^1$ and let $\norm{\cdot}$ be the norm on $\aaa$ induced by $B$.
For any $r > 0$ let $\Hecke (G (F_\infty)^1)_r$ be the subspace of $\Hecke (G (F_\infty)^1)$ consisting of all functions supported
in the compact subset $\K_\infty \exp (\{ x \in \aaa\, : \, \norm{x} \le r \}) \K_\infty$ of $G (F_\infty)^1$.
For any finite set $\types \subset \Pi (\K_\infty)$ we let $\Hecke (G (F_\infty)^1)_{\types}$
be the subspace of $\Hecke (G (F_\infty)^1)$ consisting of functions whose $\K_\infty\times\K_\infty$-types
are contained in $\types\times\types$. Let also $\Hecke (G (F_\infty)^1)_{r,\types}=
\Hecke (G (F_\infty)^1)_r\cap\Hecke (G (F_\infty)^1)_{\types}$.

We recall \cite[Definition 6.2]{FLM3} that a collection $\mathfrak{M}$ of Borel measures on $\Pi (G (F_\infty)^1)$ is called \emph{\PB} if
for any finite set $\mathcal{F} \subset \Pi (\K_\infty)$ the supremum
$\sup_{\nu \in \mathfrak{M}} \abs{\nu(\hat{f})}$ is a continuous seminorm
on $\Hecke (G (F_\infty)^1)_{r,\mathcal{F}}$.
As was shown in [ibid., Proposition 6.1] (using the Paley-Wiener theorem of \cite{MR1046496}), this property is independent of $r>0$
and moreover it is equivalent to the following condition on $\mathfrak{M}$:
for any finite set $\mathcal{F} \subset \Pi (\K_\infty)$ there exists an integer $N = N (\mathcal{F})$ such that
\begin{equation} \label{polybnd4}
\sup_{\nu \in \mathfrak{M}} \nu (g_{N,\mathcal{F}}) < \infty,
\end{equation}
where $g_{N,\mathcal{F}}$ is the non-negative function on $\Pi (G (F_\infty)^1)$ defined by
\[
g_{N,\mathcal{F}} (\pi) = \begin{cases}(1+ \abs{\lambda_\pi})^{-N},&\text{if $\pi$ contains a 
$\K_\infty$-type in $\types$,}\\
0,&\text{otherwise.}\end{cases}
\]

\subsection{Review of bounds on the spectral side} \label{sec: spectralside}
We quickly recall some facts on the spectral side of the distribution $J$. It is given by
\begin{equation} \label{eq: spectral decomposition}
J (h) = \sum_{[M]} J_{\spec,M} (h), \ \ \ \ h\in C_c^\infty(G(\A)^1),
\end{equation}
with summation ranging over the conjugacy classes of Levi subgroups of $G$, represented by groups $M \supset M_0$.
The term corresponding to $M=G$ is simply $J_{\spec,G} (h) = \tr R_{\disc}(h)$.
The other terms were explicated in \cite{MR681737, MR681738} and further analyzed in \cite{MR2811597, MR2811598}.
We will not go into the (rather elaborate) description here, since for the sake of this paper all what we need is
a (conditional) estimate proved in \cite{FLM3}.
The estimate depends on properties \TWN\ and \BD\ for $G$, which were introduced in [ibid., \S5].
The former is a growth condition of the normalizing factors of global intertwining operators
while the latter is a property of the normalized local intertwining operators.
These properties are established for $G=\GL (n)$ and isogenous groups in [ibid., Proposition 5.5, Theorem 5.15],
and are conjectured to hold for any reductive group $G$.
We will not recall the precise formulation here and instead refer the reader to the discussion in \cite{FLM3}.

Let $P=M\ltimes U$ be a parabolic subgroup of $G$ defined over $F$ with $M\supset M_0$.
Let $\fctr M$ be the Zariski closed subgroup of $G$ generated by the unipotent radicals
of the parabolic subgroups of $G$ with Levi part $M$.
Thus, $\fctr M$ is defined over $F$, contained in $G^{\der}$, and normal in $G$.


Let $L^2_{\disc}(\AAA_MM(F)\bs M(\A))$ be the discrete part of $L^2(\AAA_MM(F)\bs M(\A))$, i.e., the
closure of the sum of all irreducible subrepresentations of the regular representation of $M(\A)$.
Denote by $\Pi_{\disc}(M(\A))$ the countable set of equivalence classes of irreducible unitary
representations of $M(\A)$ which occur in the decomposition of $L^2_{\disc}(\AAA_MM(F)\bs M(\A))$
into irreducibles.
For any $\pi\in\Pi_{\disc}(M(\A))$ we denote by $\AF^2_\pi(P)$ the space of automorphic functions $\varphi$ on $U(\A)M(F)\bs G(\A)$
such that for all $g\in G(\A)$ the function $m\mapsto\modulus_P(m)^{-\frac12}\varphi(mg)$ belongs to the $\pi$-isotypic component of $L^2(A_MM(F)\bs M(\A))$.
For a compact open subgroup $K$ of $G(\A_{\fin})$ and $\tau\in\Pi(\K_\infty)$ we denote by
$\AF^2_\pi(P)^{K,\tau}$ the subspace of right $K$-invariant functions which are $\tau$-isotypic with respect to $\K_\infty$.

For any finite set $S \supset S_\infty$ and any open subgroup $K_S$ of $\K_{S_{\fin}}$
let $\Hecke(G(F_S)^1)_{\types,K_S}$ be the space of all bi-$K_S$-invariant functions
$f \in \Hecke(G(F_S)^1)$ such that the function $f(\cdot g)$ belongs to $\Hecke (G (F_\infty)^1)_{\types}$ for all $g \in G(F_S)^1$.

\begin{proposition} (\cite[Lemma 7.2]{FLM3}) \label{prop: mainestimate}
Suppose that $G$ satisfies properties \TWN\ and \BD.
Let $S \supset S_\infty$ be a finite set of places of $F$ and $\types\subset\Pi (\K_\infty)$ a finite set.
Then for any sufficiently large $N>0$ there exists an integer $k \ge 0$ such that
for any open subgroup $K_S$ of $\K_{S_{\fin}}$, open compact subgroup $K$ of $G (\A^S)$ and $\epsilon>0$,
we have
\begin{multline} \label{eq: bnd33}
J_{\spec,M}(h\otimes\one_K)\\\ll_{\types,N,\epsilon}
\vol(K)\level(K_SK;\fctr M^+)^\epsilon\norm{h}_k\sum_{\substack{\tau\in\types,\\\pi\in\Pi_{\disc}(M(\A))}}
(1+\abs{\lambda_{\pi_\infty}})^{-N}\,\dim\AF^2_\pi (P)^{K_SK,\tau}
\end{multline}
for all $h\in\Hecke(G(F_S)^1)_{\types,K_S}$.
\end{proposition}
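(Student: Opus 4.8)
The plan is to deduce this from the refined spectral expansion of the distribution $J_{\spec,M}$ obtained in \cite{MR2811597, MR2811598}, combined with the conditional estimates of \cite[\S 7]{FLM3}; the statement coincides with \cite[Lemma 7.2]{FLM3}, and the point is that its proof requires no essential change for the test functions $h\otimes\one_K$ considered here, because $K$ enters only through the level $\level(K_S K;\fctr M^+)$. First I would recall that $J_{\spec,M}(h)$ is a finite sum, over Levi subgroups $L$ containing $M$ and regular elements $s$ of $W^L(\aaa_M)$, of absolutely convergent integrals over $i\aaa_L^*$ of the trace, on $\bigoplus_{\pi\in\Pi_{\disc}(M(\A))}\AF^2_\pi(P)$, of an operator assembled from the global intertwining operators, their logarithmic derivatives in the $\aaa_L^*$-directions, the operator $M(s,\lambda)$, and the convolution operator $\rho(P,\lambda,h)$.

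Next, for $h = h_0\otimes\one_K$ with $h_0\in\Hecke(G(F_S)^1)_{\types,K_S}$, the operator $\rho(P,\lambda,h)$ factors through the projection onto right $K_S K$-invariant vectors whose $\K_\infty$-type lies in $\types$; hence only $\pi$ with $\AF^2_\pi(P)^{K_S K,\tau}\neq 0$ for some $\tau\in\types$ contribute, producing the sum on the right of \eqref{eq: bnd33}, while normalizing the idempotent $\one_K$ produces the factor $\vol(K)$. To obtain the archimedean decay $(1+\abs{\lambda_{\pi_\infty}})^{-N}$ and the convergence of the $\lambda$-integral simultaneously, I would apply a high power of the Casimir operator $\Omega$ to $h_0$ in the archimedean variable; on the finitely many $\K_\infty$-types in $\types$ this multiplies the contribution of $\pi$ by a power of $\lambda_{\pi_\infty}+\norm{\lambda}^2+c$, turning $\rho(P,\lambda,h_0\otimes\one_K)$ into a trace-class operator of norm $O\bigl((\lambda_{\pi_\infty}+\norm{\lambda}^2+1)^{-N}\norm{h_0}_k\bigr)$ for a suitable $k=k(N,\types)$. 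This is the Paley--Wiener input used in \cite[\S 6]{FLM3}.

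The main point is to bound the intertwining-operator contribution uniformly in $K$. Property \TWN\ controls the growth of the scalar normalizing factors and of their logarithmic derivatives along $i\aaa_L^*$, so that after the archimedean cutoff the integral over $i\aaa_L^*$ converges; property \BD\ bounds the number and the denominators of the singular hyperplanes of the normalized local intertwining operators at the places of $S$, which keeps the trace-norm of the normalized operator polynomially bounded. Since the only dependence of these normalized operators on $K$ is through its finite-place level, and the operators attached to the Levi $M$ involve only the unipotent radicals of parabolic subgroups with Levi part $M$---which generate $\fctr M$---the relevant quantity is $\level(K_S K;\fctr M^+)$; a divisor-type bound for the number of contributing representations $\pi$ then absorbs the resulting polynomial into the factor $O_\epsilon\bigl(\level(K_S K;\fctr M^+)^\epsilon\bigr)$.

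The step I expect to be the main obstacle is verifying that the argument of \cite[\S 7]{FLM3} is genuinely insensitive to replacing a principal congruence subgroup by an arbitrary open subgroup $K$ of $G(\A^S)$, i.e., that every estimate there depends on $K$ only through $\level(K_S K;\fctr M^+)$; this requires tracking the bookkeeping of that section carefully, and it is precisely where properties \TWN\ and \BD\ are used in full strength.
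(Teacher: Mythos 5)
The paper does not prove this proposition at all: it is quoted verbatim from \cite[Lemma 7.2]{FLM3}, and the intended justification here is nothing more than that citation (the surrounding text explicitly says that only this conditional estimate from [ibid.] is needed, and no part of its proof is redone in the present paper). Your sketch is a fair reconstruction of how the cited lemma is proved in \cite{FLM3} --- the refined spectral expansion of \cite{MR2811597, MR2811598}, the Casimir/Paley--Wiener device to produce the factor $(1+\abs{\lambda_{\pi_\infty}})^{-N}$ and the convergence of the $i\aaa_L^*$-integral, property \TWN\ to control the (logarithmic derivatives of the) global normalizing factors, and property \BD\ to control the normalized intertwining operators --- so in substance you are reproving the quoted input rather than doing anything the paper does.

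One correction to your final paragraph: the "main obstacle" you anticipate is not an obstacle. The statement of \cite[Lemma 7.2]{FLM3} is already formulated for an \emph{arbitrary} open compact subgroup $K$ of $G(\A^S)$, with the dependence on $K$ entering only through $\level(K_SK;\fctr M^+)$; this is built into the formulation of property \BD, which bounds the degrees of the normalized intertwining operators on $K$-invariant vectors in terms of the level of an arbitrary open subgroup (your description of \BD\ in terms of singular hyperplanes at the places of $S$ is not quite what the property says). Consequently no re-examination of the bookkeeping of \cite[\S 7]{FLM3} is needed at this point; the genuinely new input required to pass from principal congruence subgroups to arbitrary non-degenerate families lies elsewhere in the present paper, namely in the geometric estimate (Theorem \ref{thm: maingeom}) and in the bound for $\OI_{P,K}$ (Lemma \ref{lem: mainspecorbitbnd}), both resting on \cite{1308.3604}.
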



\begin{remark}
As explained in [ibid.], in practice the factor $\level(K_SK;\fctr M^+)^\epsilon$ in \eqref{eq: bnd33} can be replaced
by $(1+\log\level(K_SK;\fctr M^+))^m$, where $m$ is the co-rank of $M$ in $G$, in all cases where we can verify properties \TWN\
and \BD\ (and conjecturally in general).
However, this strengthening would not significantly improve our main result.
\end{remark}

\subsection{Integrals over Richardson orbits}
The main intermediate step in the proof of the spectral limit property is to
establish polynomial boundedness of the collection of measures $\{ \mu^{G,S_\infty}_K \}_{K \in \comfam}$
on $G (F_\infty)^1$, where $\comfam$ is the set of all open subgroups of a fixed
compact open subgroup $\K_0$ of $G (\A_{\fin})$ (Proposition \ref{BoundedLemma} below).
For the proof, we will assume by induction that this property holds for proper Levi subgroups $M$ of $G$ and explicate
Proposition \ref{prop: mainestimate} in terms of certain unipotent orbital integrals (Proposition \ref{cor: SpectralEstimate} below).

Let $d_{\ell}p$ be a left Haar measure of $P (\A_ {\fin})$ and let $\modulus_P$ be the modulus function of $P(\A_{\fin})$.
For any continuous function $f$ on $G (\A_{\fin})$ such that $f (pg) = \modulus_P (p) f(g)$ for all $p\in P(\A_{\fin})$, $g\in G(\A_{\fin})$,
the integral $\int_{P (\A_{\fin}) \backslash G (\A_{\fin})} f(g) \ dg$ is well defined
and is invariant under right translation by elements of $G (\A_{\fin})$. By our choice of measures we have
\[
\int_{P (\A_{\fin}) \backslash G (\A_{\fin})} f(g) \ dg=\int_{\K_{\fin}} f(k) \ dk.
\]
Moreover, we have
\[
\int_{G (\A_{\fin})} f(g) \ dg = \int_{P (\A_{\fin}) \backslash G (\A_{\fin})} \int_{P (\A_{\fin})} f (pg) \ d_{\ell}p \ dg
\]
for all $f \in C_c(G (\A_{\fin}))$.

For any $f \in C_c (G (\A_{\fin}))$ define
\[
\OI_{P} (f) = \int_{P (\A_{\fin}) \backslash G (\A_{\fin})}
\int_{U(\A_{\fin})} f (g^{-1}ug)\ du\ dg = \int_{\K_{\fin}}\int_{U(\A_{\fin})} f (k^{-1}uk)\ du\ dk.
\]
It is then clear that $\OI_P$ is an invariant distribution on $G (\A_{\fin})$.
(It is in fact the stable distribution corresponding to the Richardson orbit with respect to $P$.)
For any compact open subgroup $K \subset G (\A_{\fin})$ set
\[
\OI_{P,K} = \OI_{P} (\one_K).
\]


Denote by $\proj_M$ the canonical projection $P(\A_\fin)\rightarrow M(\A_\fin)$.
Note that the double coset space $P (\A_{\fin}) \backslash G (\A_{\fin}) / K$ is finite, since
$P (\A_{\fin}) \backslash G (\A_{\fin})$ is compact.

\begin{lemma} \label{LemmaRichardson}
For any compact open subgroup $K \subset G (\A_{\fin})$ we have
\begin{equation} \label{eq: OI double coset}
\OI_{P,K} = \vol (K) \sum_{\gamma \in P (\A_{\fin}) \backslash G (\A_{\fin}) / K}
(\vol_M (\proj_M (P (\A_{\fin}) \cap \gamma K \gamma^{-1})))^{-1}.
\end{equation}
\end{lemma}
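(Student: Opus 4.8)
The plan is to compute the integral defining $\OI_{P,K} = \OI_P(\one_K)$ directly by unfolding both integrals and organizing the $G(\A_\fin)$-integral according to the double cosets in $P(\A_\fin) \backslash G(\A_\fin) / K$. First I would write
\[
\OI_{P,K} = \int_{\K_\fin} \int_{U(\A_\fin)} \one_K(k^{-1}uk) \, du \, dk,
\]
and observe that since $U(\A_\fin)$ is a union of $K$-conjugates and the measure on $P(\A_\fin)\backslash G(\A_\fin)$ is right $G(\A_\fin)$-invariant, I may replace the integration over $\K_\fin$ by the integral over $P(\A_\fin) \backslash G(\A_\fin)$ appearing in the definition of $\OI_P$. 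Then I would pick a (finite) set of representatives $\gamma$ for $P(\A_\fin)\backslash G(\A_\fin)/K$ and break the $g$-integral into pieces $g \in P(\A_\fin)\gamma K / P(\A_\fin)$; on each such piece the inner integral $\int_{U(\A_\fin)} \one_K(g^{-1}ug)\, du$ depends only on the coset $P(\A_\fin)g$.

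The core computation is: for fixed $\gamma$, evaluate $\int_{P(\A_\fin)\backslash G(\A_\fin), \, P g \in P\gamma K} \int_{U(\A_\fin)} \one_K((pg)^{-1} u (pg))\, du$, which after the substitution $g = \gamma k'$ reduces to a sum/integral over $k' \in \gamma^{-1}P\gamma \cap K \backslash K$ of $\int_{U(\A_\fin)} \one_{\gamma K \gamma^{-1}}(u)$-type terms. More precisely, the set of $u \in U(\A_\fin)$ with $\gamma^{-1} u \gamma \in K$, i.e. $u \in \gamma K \gamma^{-1} \cap U(\A_\fin)$, is what controls things: the volume of $U(\A_\fin) \cap \gamma K\gamma^{-1}$. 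I would then relate this to $\vol_M(\proj_M(P(\A_\fin) \cap \gamma K \gamma^{-1}))$ via the fact that $P = M \ltimes U$, so $\vol_P(P(\A_\fin) \cap \gamma K \gamma^{-1}) = \vol_M(\proj_M(P(\A_\fin)\cap\gamma K\gamma^{-1})) \cdot \vol_U(U(\A_\fin)\cap\gamma K\gamma^{-1})$ (using compatibility of Haar measures along the semidirect product and that $U(\A_\fin)\cap\gamma K\gamma^{-1}$ is normal in $P(\A_\fin)\cap\gamma K\gamma^{-1}$, so $\proj_M$ of the latter is genuinely the image). Combining with the standard identity $\vol(K) = \vol_P(P(\A_\fin)\cap\gamma K\gamma^{-1}) \cdot |(P(\A_\fin)\cap\gamma K\gamma^{-1})\backslash \gamma K\gamma^{-1}|^{-1}$-type bookkeeping for the measure of the $P$-orbit slice, the factor $\vol_U(U(\A_\fin)\cap\gamma K\gamma^{-1})$ cancels and one is left with $\vol(K) \cdot (\vol_M(\proj_M(P(\A_\fin)\cap\gamma K\gamma^{-1})))^{-1}$ for each representative $\gamma$.

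Summing over the finitely many double coset representatives $\gamma$ then yields \eqref{eq: OI double coset}. The main obstacle I anticipate is purely bookkeeping: carefully tracking the normalizations of the various Haar measures (on $G(\A_\fin)$, $K$, $P(\A_\fin)$ with its left Haar measure $d_\ell p$ and modulus $\modulus_P$, $U(\A_\fin)$, $M(\A_\fin)$, and $P(\A_\fin)\backslash G(\A_\fin)$) so that all the volume factors match up and the $\modulus_P$ twists cancel correctly, and making sure the measure of the orbit of $\gamma K$ inside $P(\A_\fin)\backslash G(\A_\fin)$ is expressed correctly in terms of $\vol(K)$ and the volume of the stabilizer $P(\A_\fin)\cap\gamma K\gamma^{-1}$. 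Once the measures are pinned down using the compatibility relations recorded just before the lemma (in particular $\int_{P\backslash G} f = \int_{\K_\fin} f$ and the $d_\ell p\, dg$ decomposition of $dg$ on $G(\A_\fin)$), the computation should go through cleanly.
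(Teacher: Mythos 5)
Your proposal is correct and follows essentially the same route as the paper's proof: decompose the integral defining $\OI_{P,K}$ over the finitely many double cosets in $P(\A_{\fin})\backslash G(\A_{\fin})/K$, use the factorization $\vol_P(L)=\vol_M(\proj_M(L))\,\vol_U(U(\A_{\fin})\cap L)$, and pin down the measure bookkeeping by unfolding $\vol(K)=\int_{G(\A_{\fin})}\one_K(g)\,dg$ through the $d_{\ell}p\,dg$ decomposition, which is exactly how the paper establishes the key identity $\vol(K)=\int_{P(\A_{\fin})\backslash P(\A_{\fin})\gamma K}\vol_P(P(\A_{\fin})\cap gKg^{-1})\,dg$. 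The only point to phrase carefully is that your displayed ``standard identity'' cannot be taken literally (the index written there is infinite, and since $P(\A_{\fin})$ is not unimodular the coset $P(\A_{\fin})\backslash P(\A_{\fin})\gamma K$ carries no intrinsic measure, while $\vol_P(P(\A_{\fin})\cap gKg^{-1})$ varies along it by a $\modulus_P$-twist); only the factor $\vol_M(\proj_M(P(\A_{\fin})\cap gKg^{-1}))$ is constant on the double coset (because $M(\A_{\fin})$ is unimodular, so $\vol_M$ is conjugation-invariant), and pulling out just that factor is precisely what makes your intended cancellation of the $\vol_U$-factors go through as in the paper.
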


\begin{proof}
We write
\begin{align*}
\OI_{P,K} & = \int_{P (\A_{\fin}) \backslash G (\A_{\fin})} \vol_U (U (\A_{\fin}) \cap g K g^{-1}) \ dg \\
& = \sum_{\gamma \in P (\A_{\fin}) \backslash G (\A_{\fin}) / K}
\int_{P (\A_{\fin}) \backslash P (\A_{\fin}) \gamma K} \vol_U (U (\A_{\fin}) \cap g K g^{-1}) \ dg.
\end{align*}
On the other hand, we claim that for each $\gamma \in G (\A_{\fin})$ we have
\begin{align*}
\vol (K) & = \int_{P (\A_{\fin}) \backslash P (\A_{\fin}) \gamma K} \vol_P (P (\A_{\fin}) \cap g K g^{-1}) \ dg \\
& = \vol_M (\proj_M (P (\A_{\fin}) \cap \gamma K \gamma^{-1}))
\int_{P (\A_{\fin}) \backslash P (\A_{\fin}) \gamma K} \vol_U (U (\A_{\fin}) \cap g K g^{-1}) \ dg,
\end{align*}
where for the second equality we use the fact that for any compact open subgroup $L \subset P (\A_{\fin})$ we can factor
\[
\vol_P (L) = \vol_M (\proj_M (L)) \vol_U (U (\A_{\fin}) \cap L).
\]
This clearly implies the lemma.

To prove the claim, we may replace $g$ by $g \gamma^{-1}$ and $K$ by $\gamma K \gamma^{-1}$ and reduce to the case $\gamma = 1$.
For $g \in P (\A_{\fin}) K$ we can write (non-uniquely) $g = p_g k_g$ with $p_g \in P (\A_{\fin})$ and $k_g \in K$.
This implies that $P (\A_{\fin}) \cap K g^{-1} = p_g^{-1} (P (\A_{\fin}) \cap p_g K g^{-1}) = p_g^{-1} (P (\A_{\fin}) \cap g K g^{-1})$.
We obtain
\begin{align*}
\vol (K) & =
\int_{P (\A_{\fin}) \backslash P (\A_{\fin}) K} \int_{P (\A_{\fin})} \one_K (pg) \ d_{\ell}p \ dg \\
& = \int_{P (\A_{\fin}) \backslash P (\A_{\fin}) K} \vol_P (P (\A_{\fin}) \cap K g^{-1}) \ dg \\
& = \int_{P (\A_{\fin}) \backslash P (\A_{\fin}) K} \vol_P (P (\A_{\fin}) \cap g K g^{-1}) \ dg, \\
\end{align*}
which establishes the claim and finishes the proof.
\end{proof}

\begin{proposition} \label{cor: SpectralEstimate}
Suppose that $G$ satisfies properties \TWN\ and \BD. Let $S \supset S_\infty$ be a finite set of places of $F$
and $\fixco$ a compact open subgroup of $G (\A^S)$.
Let $M$ be a proper standard Levi subgroup of $G$ defined over $F$. Assume
that the collection of measures $\{ \mu^{M, S_\infty}_{K_M} \}$, where
$K_M = \proj_M (P (\A_{\fin}) \cap \gamma \tilde K \gamma^{-1})$, $\tilde K$ is an open subgroup
of $\K_0 = \K_{S_{\fin}} \fixco$, and $\gamma \in G (\A_{\fin})$, is \PB.

Then for any finite set $\types\subset\Pi (\K_\infty)$ there exists an integer $k \ge 0$ such that
for any open subgroup $K_S \subset \K_{S_{\fin}}$ and $\epsilon>0$ we have
\[
J_{\spec,M}(h\otimes\one_K)\ll_{\types,\epsilon}\vol(K_S)^{-1}\norm{h}_k\cdot\OI_{P,K_SK}\cdot\level(K_SK;\fctr M^+)^\epsilon
\]
for all $h\in\Hecke(G(F_S)^1)_{\types,K_S}$ and all open subgroups $K$ of $\fixco$.
\end{proposition}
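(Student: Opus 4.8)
The plan is to insert a double-coset decomposition of the relevant induced representations into the conditional estimate of Proposition~\ref{prop: mainestimate} and to recognise the resulting sum over $\Pi_{\disc}(M(\A))$ as an evaluation of the measures $\mu^{M,S_\infty}_{K_M}$ appearing in the hypothesis; polynomial boundedness of the latter then supplies the required uniform control.

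First I would record the structure of the spaces $\AF^2_\pi(P)$. As a representation of $G(\A)$, $\AF^2_\pi(P)$ is isomorphic to $m_{\disc}(\pi)$ copies of the normalised induced representation $\Ind_{P(\A)}^{G(\A)}(\pi)$, where $m_{\disc}(\pi)=\dim\Hom_{M(\A)}(\pi,L^2_{\disc}(\AAA_MM(F)\bs M(\A)))$. Since $\modulus_P^{1/2}$ is trivial on every compact subgroup of $P(\A_{\fin})$ and the double coset space $P(\A_{\fin})\bs G(\A_{\fin})/(K_SK)$ is finite, Mackey theory yields
\[
\dim\AF^2_\pi(P)^{K_SK,\tau}=m_{\disc}(\pi)\cdot\bigl[\Ind_{P(F_\infty)}^{G(F_\infty)}\pi_\infty:\tau\bigr]\cdot\sum_{\gamma\in P(\A_{\fin})\bs G(\A_{\fin})/(K_SK)}\dim\pi_{\fin}^{K_M^\gamma},
\]
where $\pi_{\fin}$ is the finite component of $\pi$ and $K_M^\gamma=\proj_M\!\bigl(P(\A_{\fin})\cap\gamma(K_SK)\gamma^{-1}\bigr)$; since $K_SK$ is an open subgroup of $\K_0=\K_{S_{\fin}}\fixco$, these groups $K_M^\gamma$ are precisely the ones occurring in the polynomial boundedness hypothesis. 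Summing the archimedean factor over $\tau\in\types$ and using the Iwasawa decomposition $G(F_\infty)=P(F_\infty)\K_\infty$ together with Frobenius reciprocity for $\K_\infty\supset\K_\infty\cap M(F_\infty)$ (a maximal compact subgroup of $M(F_\infty)$, as $\K$ is admissible with respect to $M_0$), one obtains $\sum_{\tau\in\types}[\Ind_{P(F_\infty)}^{G(F_\infty)}\pi_\infty:\tau]=\sum_{\sigma\in\types_M}c_\sigma\,[\pi_\infty\rest_{\K_\infty\cap M(F_\infty)}:\sigma]$ for a finite set $\types_M\subset\Pi(\K_\infty\cap M(F_\infty))$ and bounded coefficients $c_\sigma$, both depending only on $\types$. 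Invoking the uniform bound on $\K_\infty\cap M(F_\infty)$-type multiplicities in irreducible admissible representations of $M(F_\infty)$ (e.g.\ via the subrepresentation theorem and the standard facts about $\K$-types of discrete and principal series), this archimedean factor is $\ll_{\types}\one\{\pi_\infty\text{ contains a }\K_\infty\cap M(F_\infty)\text{-type in }\types_M\}$.

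Feeding these identities into Proposition~\ref{prop: mainestimate} (with $N$ to be fixed shortly) gives
\[
J_{\spec,M}(h\otimes\one_K)\ll_{\types,N,\epsilon}\vol(K)\,\level(K_SK;\fctr M^+)^\epsilon\,\norm{h}_k\ \sum_{\gamma}\ \sum_{\pi}(1+\abs{\lambda_{\pi_\infty}})^{-N}\,m_{\disc}(\pi)\,\dim\pi_{\fin}^{K_M^\gamma},
\]
where the inner sum runs over those $\pi\in\Pi_{\disc}(M(\A))$ whose archimedean component contains a $\K_\infty\cap M(F_\infty)$-type in $\types_M$. Unwinding the definition of $\mu^{M,S_\infty}_{K_M^\gamma}$ and of $g_{N,\types_M}$ from \S\ref{SectionPB} (after the standard identification of $L^2_{\disc}(\AAA_MM(F)\bs M(\A))$ with the discrete spectrum of $L^2(M(F)\bs M(\A)^1)$ and the attendant bookkeeping with $\AAA_M$ and central characters), the inner sum over $\pi$ equals $\vol(M(F)\bs M(\A)^1)\,\vol_M(K_M^\gamma)^{-1}\,\mu^{M,S_\infty}_{K_M^\gamma}(g_{N,\types_M})$. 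Applying the polynomial boundedness hypothesis to $\types_M$ produces an integer $N_0$ and a constant $C<\infty$ with $\mu^{M,S_\infty}_{K_M^\gamma}(g_{N_0,\types_M})\le C$ for all admissible $\gamma$; fixing any $N\ge N_0$ which is also large enough for Proposition~\ref{prop: mainestimate}, and using $g_{N,\types_M}\le g_{N_0,\types_M}$, the inner sum is $\ll_{\types}\vol_M(K_M^\gamma)^{-1}$, with $k$ now depending only on $\types$. Hence
\[
J_{\spec,M}(h\otimes\one_K)\ll_{\types,\epsilon}\vol(K)\,\level(K_SK;\fctr M^+)^\epsilon\,\norm{h}_k\sum_{\gamma\in P(\A_{\fin})\bs G(\A_{\fin})/(K_SK)}\vol_M(K_M^\gamma)^{-1},
\]
and Lemma~\ref{LemmaRichardson} applied to $K_SK$ identifies the last sum with $\vol(K_SK)^{-1}\OI_{P,K_SK}=\vol(K_S)^{-1}\vol(K)^{-1}\OI_{P,K_SK}$, which is exactly the asserted bound.

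The main obstacle is the two identifications. The first, the Mackey/Iwasawa decomposition of $\AF^2_\pi(P)^{K_SK,\tau}$, is routine but must be set up with care (normalisation of induction, finiteness of the double coset space, the passage from $\K_\infty$-types to $\K_\infty\cap M(F_\infty)$-types). The second, matching the weighted sum over $\Pi_{\disc}(M(\A))$ with $\mu^{M,S_\infty}_{K_M^\gamma}(g_{N,\types_M})$, is the delicate point: it requires being precise about the relation between $M(\A)$ and $M(\A)^1$ and the role of $\AAA_M$, about the uniform multiplicity bound for $\K_\infty\cap M(F_\infty)$-types, and about the compatibility of the Haar measure normalisations on $G(\A_{\fin})$, $G(F_{S_{\fin}})$, $G(\A^S)$ (so that $\vol(K_SK)=\vol(K_S)\vol(K)$) and on the corresponding Levi subgroups. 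None of this is conceptually hard, but it has to be carried out in the conventions of \cite{FLM3}.
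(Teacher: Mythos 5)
Your proposal is correct and follows essentially the same route as the paper: insert the double-coset (Mackey) decomposition of $\dim\AF^2_\pi(P)^{K_SK,\tau}$ into Proposition \ref{prop: mainestimate}, recognise the resulting inner sum over $\Pi_{\disc}(M(\A))$ as controlled via \eqref{polybnd4} by polynomial boundedness of the measures $\mu^{M,S_\infty}_{K_M^\gamma}$, and convert $\sum_\gamma \vol_M(K_M^\gamma)^{-1}$ into $\vol(K_SK)^{-1}\OI_{P,K_SK}$ by Lemma \ref{LemmaRichardson}. The only (immaterial) difference is at the archimedean place, where the paper simply bounds $\dim(\Ind^{G(F_\infty)}_{P(F_\infty)}\pi_\infty)^{\tau}\le(\dim\tau)^2$ and uses Frobenius reciprocity only to restrict the support to $\Pi_{\disc}(M(\A))^{\types_M}$, rather than your slightly more elaborate reduction to $\K_\infty\cap M(F_\infty)$-type multiplicities.
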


\begin{proof}
For an open subgroup $K$ of $\fixco$ write $\tilde K=K_SK$. Let $\types_M \subset \Pi (\K_{M,\infty})$ be the finite set of all irreducible components of
the restrictions of elements of $\types$ to $\K_{M,\infty}$.
Then by Frobenius reciprocity only those $\pi \in \Pi_{\disc}(M(\A))$
such that $\pi_\infty$ contains a $\K_{M,\infty}$-type in $\types_M$ can contribute to the right-hand side of \eqref{eq: bnd33}.
We denote the corresponding subset of $\Pi_{\disc}(M(\A))$ by $\Pi_{\disc}(M(\A))^{\types_M}$.

Consider now the dimensions of the spaces of automorphic forms appearing in \eqref{eq: bnd33}. We have
\begin{align*}
\dim\AF^2_\pi (P)^{\tilde K,\tau} & = m_\pi \dim \Ind^{G(\A)}_{P(\A)}(\pi)^{\tilde K,\tau} \\
& =m_\pi \dim(\Ind^{G(F_\infty)}_{P(F_\infty)}\pi_\infty)^{\tau} \dim(\Ind^{G(\A_{\fin})}_{P (\A_{\fin})}\pi_{\fin})^{\tilde K},
\end{align*}
where
\[
m_\pi=\dim\Hom(\pi,L^2_{\disc}(A_MM(F)\bs M(\A))).
\]
The factor $\dim(\Ind^{G(F_\infty)}_{P(F_\infty)}\pi_\infty)^{\tau}$ is bounded by $(\dim \tau)^2$, while
\[
\dim(\Ind^{G(\A_{\fin})}_{P (\A_{\fin})}\pi_{\fin})^{\tilde K}
=\sum_{\gamma\in P(\A_\fin)\bs G(\A_{\fin}) /\tilde K}\dim\pi_\fin^{\tilde K^\gamma_M}
\]
with
\[
\tilde K^\gamma_M = \proj_M (P (\A_{\fin}) \cap \gamma \tilde K \gamma^{-1}).
\]
Putting things together, for any $N$ there exists $k$ such that
\begin{multline*}
J_{\spec,M}(h\otimes\one_K)\ll_{\types,\epsilon}\\ \vol(K)\norm{h}_k\level(\tilde K;\fctr M^+)^{\epsilon}
\sum_{\gamma\in P(\A_\fin)\bs G(\A_{\fin}) /\tilde K} \sum_{\pi\in\Pi_{\disc}(M(\A))^{\types_M}}
(1 + \abs{\lambda_{\pi_\infty}})^{-N}m_\pi\dim\pi_\fin^{\tilde K^\gamma_M}.
\end{multline*}

By assumption, the collection
of measures $\{ \mu^{M, S_\infty}_{\tilde K^\gamma_M} \}$ is \PB. Using \eqref{polybnd4}, this means that there exists an integer $N$, depending only on $\types_M$,
such that
\[
\vol_M (\tilde K^\gamma_M) \sum_{\pi\in\Pi_{\disc}(M(\A))^{\types_M}}
(1 + \abs{\lambda_{\pi_\infty}})^{-N}m_\pi\dim\pi_\fin^{\tilde K^\gamma_M} \ll_{\types_M} 1
\]
as $K \subset K_0^S$ and $\gamma \in G (\A_{\fin})$.
By Lemma \ref{LemmaRichardson}, we obtain from this that
\[
\vol(\tilde K)\sum_{\gamma\in P(\A_\fin)\bs G(\A_{\fin}) /\tilde K} \sum_{\pi\in\Pi_{\disc}(M(\A))^{\types_M}}
(1 + \abs{\lambda_{\pi_\infty}})^{-N}m_\pi\dim\pi_\fin^{\tilde K^\gamma_M}\ll_{\types_M}\OI_{P,\tilde K}.
\]
The proposition follows.
\end{proof}

We remark that the assumption that $\tilde K$ is a subgroup of $\K_{S_{\fin}} \fixco$ 
did not play any role in the proof.
Thus, the same argument yields the following more general result (which will not be used in the remainder of this paper).

\begin{proposition}
Suppose that $G$ satisfies properties \TWN\ and \BD. Let $S \supset S_\infty$ be a finite set of places of $F$
and $\comfam^S$ a collection of compact open subgroups of $G (\A^S)$.
Let $M$ be a proper standard Levi subgroup of $G$ defined over $F$. Assume
that the collection of measures $\{ \mu^{M, S_\infty}_{K_M} \}$, where
$K_M = \proj_M (P (\A_{\fin}) \cap \gamma K_S K^S \gamma^{-1})$, $K_S$ is
an open subgroup of $\K_{S_{\fin}}$, $K^S \in \comfam^S$, and $\gamma \in G (\A_{\fin})$, is \PB.

Then for any finite set $\types\subset\Pi (\K_\infty)$
there exists an integer $k \ge 0$ such that
for any open subgroup $K_S \subset \K_{S_{\fin}}$ and $\epsilon>0$ we have
\[
J_{\spec,M}(h\otimes\one_K)\ll_{\types,\epsilon}\vol(K_S)^{-1}\norm{h}_k\cdot\OI_{P,K_SK}\cdot\level(K_SK;\fctr M^+)^\epsilon
\]
for all $h\in\Hecke(G(F_S)^1)_{\types,K_S}$ and all $K \in \comfam^S$.
\end{proposition}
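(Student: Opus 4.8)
The plan is to repeat the proof of Proposition~\ref{cor: SpectralEstimate} essentially without change, noting (as pointed out in the remark above) that the hypothesis there that $\tilde K$ be a subgroup of $\K_{S_{\fin}}\fixco$ was never actually used: every estimate in that proof is uniform over the whole collection of Levi-level data $\tilde K^\gamma_M = \proj_M(P(\A_{\fin}) \cap \gamma \tilde K \gamma^{-1})$, and the polynomial boundedness hypothesis has been formulated precisely so as to control exactly this collection. Neither Proposition~\ref{prop: mainestimate} nor Lemma~\ref{LemmaRichardson} refers to a fixed ambient compact subgroup of $G(\A^S)$, so there is no obstruction to letting $K$ range over an arbitrary collection $\comfam^S$.

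Concretely, I would fix $K \in \comfam^S$ and an open subgroup $K_S \subset \K_{S_{\fin}}$ and put $\tilde K = K_S K$. Since Proposition~\ref{prop: mainestimate} is stated for an arbitrary open compact subgroup of $G(\A^S)$, the bound \eqref{eq: bnd33} is available for $K$. Exactly as in the proof of Proposition~\ref{cor: SpectralEstimate}, one passes to the set $\types_M$ of $\K_{M,\infty}$-types occurring in restrictions of elements of $\types$ and the corresponding subset $\Pi_{\disc}(M(\A))^{\types_M}$, factors $\dim\AF^2_\pi(P)^{\tilde K,\tau} = m_\pi \dim(\Ind^{G(F_\infty)}_{P(F_\infty)}\pi_\infty)^\tau \dim(\Ind^{G(\A_{\fin})}_{P(\A_{\fin})}\pi_{\fin})^{\tilde K}$, bounds the archimedean factor by $(\dim\tau)^2$, and writes the finite factor as $\sum_{\gamma \in P(\A_{\fin})\bs G(\A_{\fin})/\tilde K} \dim\pi_\fin^{\tilde K^\gamma_M}$ (the double coset space being finite since $P(\A_{\fin})\bs G(\A_{\fin})$ is compact). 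This reduces the task to bounding $\vol(K)\,\level(\tilde K;\fctr M^+)^\epsilon\,\norm{h}_k$ times $\sum_{\gamma}\sum_{\pi \in \Pi_{\disc}(M(\A))^{\types_M}}(1+\abs{\lambda_{\pi_\infty}})^{-N}m_\pi\dim\pi_\fin^{\tilde K^\gamma_M}$.

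The hypothesis enters at a single point: applying \eqref{polybnd4} to the family $\{\mu^{M,S_\infty}_{\tilde K^\gamma_M}\}$ — which, since $\mu^{M,S_\infty}_{\tilde K^\gamma_M}$ depends only on $\tilde K^\gamma_M = \proj_M(P(\A_{\fin}) \cap \gamma K_S K \gamma^{-1})$, is literally the family assumed to be \PB\ in the statement — produces an integer $N$ depending only on $\types_M$ such that $\vol_M(\tilde K^\gamma_M)\sum_{\pi}(1+\abs{\lambda_{\pi_\infty}})^{-N}m_\pi\dim\pi_\fin^{\tilde K^\gamma_M} \ll_{\types_M} 1$, uniformly in $\gamma$, $K_S$ and $K \in \comfam^S$. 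Hence $\sum_\gamma\sum_\pi(\cdots) \ll \sum_\gamma(\vol_M(\tilde K^\gamma_M))^{-1}$, and by Lemma~\ref{LemmaRichardson} the latter equals $\vol(\tilde K)^{-1}\OI_{P,\tilde K}$. Substituting back and using $\vol(K) = \vol(\tilde K)/\vol(K_S)$ (with the measure conventions of the proof of Proposition~\ref{cor: SpectralEstimate}) gives
\[
J_{\spec,M}(h\otimes\one_K) \ll_{\types,\epsilon} \vol(K_S)^{-1}\,\norm{h}_k\cdot\OI_{P,K_SK}\cdot\level(K_SK;\fctr M^+)^\epsilon,
\]
which is the assertion. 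Since the argument merely re-runs a proof already given, there is no genuine obstacle; the one thing worth emphasizing is the observation in the first paragraph, namely that all constants in the proof of Proposition~\ref{cor: SpectralEstimate} are uniform over the whole collection of the groups $\tilde K^\gamma_M$, so that the source of the subgroups $K$ is immaterial once the relevant Levi-level data form a \PB\ family.
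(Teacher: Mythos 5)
Your proposal is correct and coincides with the paper's own treatment: the paper proves this proposition precisely by remarking that the containment $\tilde K \subset \K_{S_{\fin}}\fixco$ was never used in the proof of Proposition~\ref{cor: SpectralEstimate}, so the identical argument applies with $K$ ranging over $\comfam^S$. Your re-run of that proof, with the polynomial boundedness hypothesis applied to the family $\{\mu^{M,S_\infty}_{\tilde K^\gamma_M}\}$ and Lemma~\ref{LemmaRichardson} converting $\sum_\gamma \vol_M(\tilde K^\gamma_M)^{-1}$ into $\vol(\tilde K)^{-1}\OI_{P,\tilde K}$, is exactly the intended argument.
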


\subsection{Completion of the proof}
We can now prove polynomial boundedness by induction over the Levi subgroups of $G$. The group-theoretic ingredient is
the following estimate for $\OI_{P,K}$ from \cite{1308.3604}.

\begin{lemma}(\cite[Corollary 5.28]{1308.3604}) \label{lem: mainspecorbitbnd}
There exists $\delta>0$ such that for all compact open subgroups $\K_0 \subset G (\A_{\fin})$ we have
\[
\OI_{P,K} \ll_{\K_0} \level(K;\fctr M^+)^{-\delta},
\]
as $K$ ranges over the open subgroups of $\K_0$.
\end{lemma}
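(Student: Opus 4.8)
The plan is to adapt the argument of Lemma~\ref{truncatedlemma} to this purely non-archimedean situation. By the definition of $\OI_P$ and Fubini's theorem,
\[
\OI_{P,K}=\int_{\K_\fin}\int_{U(\A_\fin)}\one_K(k^{-1}uk)\ du\ dk=\int_{U(\A_\fin)}\vol\bigl(\{k\in\K_\fin:k^{-1}uk\in K\}\bigr)\ du,
\]
where the integrand is supported on the fixed compact set $U(\A_\fin)\cap\Gamma_0$, with $\Gamma_0=\bigcup_{k\in\K_\fin}k\K_0k^{-1}$ a relatively compact subset of $G(\A_\fin)$ depending only on $\K_0$. Let $H\in\fctrz(G)$ be the group with $H^{\ad}=\fctr M^{\ad}$; since $M$ is a proper Levi subgroup we have $\fctr M\ne1$, hence $H\ne Z_G$, and since $H^{\der}=\fctr M$ we have $H^+=\fctr M^+$ and thus $\level(K;H^+)=\level(K;\fctr M^+)=:N=\prod_pp^{n_p}$. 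Applying Lemma~\ref{LemmaConjugacyClass} with $S=S_\infty$, $y=1$, and $x=u\in U(\A_\fin)\subset H(\A_\fin)$ bounds the inner volume by $N_{\primes'(u)}^{-\varepsilon}$ for each $u$, where $\primes'(u)=\{p\mid N:\lambda_p^H(u)<\delta n_p\}$, $N_{\primes'}=\prod_{p\in\primes'}p^{n_p}$, and $\varepsilon,\delta>0$ are absolute constants.

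As in the proof of Lemma~\ref{truncatedlemma}, I would then partition the domain of integration according to the value of $\primes'(u)$, observing that on the stratum $\{u:\primes'(u)=\primes'\}$ one has $\lambda_p^H(u)\ge\delta n_p$ for every $p\mid N$ with $p\notin\primes'$. It therefore suffices to prove a volume analogue of the lattice-point estimate of Lemma~\ref{lem: latpntcnt}: for every compact $\Omega\subset U(\A_\fin)$ there is a $c>0$ with
\[
\vol_U\bigl(\{u\in\Omega:\lambda_p^H(u)\ge m_p\text{ for all }p\in R\}\bigr)\ll_{\Omega}\Bigl(\prod_{p\in R}p^{m_p}\Bigr)^{-c}
\]
for all finite sets $R$ of primes and all tuples $(m_p)_{p\in R}$. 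Granting this (with $R=\primes(N)\setminus\primes'$ and $m_p=\lceil\delta n_p\rceil$), the stratum $\primes'$ contributes $\ll_{\K_0}N_{\primes'}^{-\varepsilon}(N/N_{\primes'})^{-c\delta}$, and summing over all subsets $\primes'\subset\primes(N)$ yields
\[
\OI_{P,K}\ll_{\K_0}\abs{\{\primes':\primes'\subset\primes(N)\}}\cdot N^{-\min(\varepsilon,c\delta)}\ll_{\K_0}N^{-\delta'}
\]
for any $0<\delta'<\min(\varepsilon,c\delta)$, which is the assertion.

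The displayed volume estimate is the real content, and is precisely \cite[Corollary~5.28]{1308.3604}; I expect it to be the main obstacle. To establish it one first reduces to $F=\Q$ by restriction of scalars, as in the proof of Lemma~\ref{LemmaConjugacyClass}, and decomposes $H^{\ad}=\fctr M^{\ad}$ into its $F$-simple factors, so that $U=\prod_{i=1}^rU_i$ with each $U_i$ the (non-trivial) unipotent radical of a proper parabolic subgroup of the $i$-th simple factor of $\fctr M$. Using \eqref{EqnLambdaLcm} together with the divisor bound, one reduces to treating one factor at a time; for a single factor the conditions $\lambda_p^H(u)\ge m_p$ translate, by \cite[Lemmas~5.25 and~5.27]{1308.3604} (and a stratification of $U_i$ along its descending central series to handle non-commutativity), into a congruence condition on the coordinates of $u_i$ modulo $\prod_pp^{m_p}$, whose solution set has small volume by the standard bound on the number of sublattices of a given index. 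The essential point, which ties back to the approximation principle of \cite{1308.3604}, is that $U$ has a non-trivial component in \emph{every} $F$-simple factor of $\fctr M$: an open subgroup $K$ of $G(\A_\fin)$ that is ``thin'' in one simple factor of $\fctr M$ is automatically thin in the corresponding $U$-directions, so a large value of $\level(K;\fctr M^+)$ genuinely forces $\OI_{P,K}$ to be small.
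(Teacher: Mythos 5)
You are proving a statement that the paper itself does not prove: Lemma \ref{lem: mainspecorbitbnd} is imported verbatim from the companion paper \cite[Corollary 5.28]{1308.3604}, so there is no internal proof to compare against, and your proposal has to be judged on its own merits. Its architecture is sound and is exactly the volume-theoretic analogue of the paper's proof of Lemma \ref{truncatedlemma}: Fubini gives $\OI_{P,K}=\int_{U(\A_\fin)}\vol\{k\in\K_\fin:k^{-1}uk\in K\}\,du$ over a compact set; the identifications $H^+=\fctr M^+$ and $U\subset H$ (with $H\neq Z_G$ because $M$ is proper) are correct, so Lemma \ref{LemmaConjugacyClass} applies with $y=1$; stratifying by the set $\primes'$ of primes where $\lambda_p^H(u)<\delta v_p(N)$ and summing the $2^{\omega(N)}\ll_\eta N^\eta$ strata is the same bookkeeping as in Lemma \ref{truncatedlemma}, with the lattice-point count of Lemma \ref{lem: latpntcnt} replaced by a volume bound. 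There is also no circularity in principle: Lemma \ref{LemmaConjugacyClass} rests on \cite[Corollary 5.8]{1308.3604}, not 5.28, just as in the paper's geometric argument.

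Two caveats. First, essentially all the content is in your displayed volume estimate, which you only sketch, and your attribution is off: \cite[Corollary 5.28]{1308.3604} \emph{is} the bound on $\OI_{P,K}$ being proved, not the intermediate estimate; what your sketch really needs is the local machinery behind it (Lemmas 5.25--5.27 of \cite{1308.3604}), in particular that $\Lie U\otimes\Q_p$ projects non-trivially onto \emph{every} $\Q_p$-simple ideal of $\Lieh\otimes\Q_p$ (true, since $U\cap H_i$ is the unipotent radical of a proper $F$-parabolic of each $F$-simple factor of $\fctr M$), together with a uniform treatment of the union over simple ideals and of the finitely many bad primes, where the constant per prime is not negligible against $p^{-m_p}$. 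Second, the lemma asserts a single $\delta$ valid for all $\K_0$, with only the implied constant depending on $\K_0$; as you state the volume estimate, $c=c(\Omega)$ and $\Omega$ depends on $\K_0$, so your final exponent would too. This is fixable (one can take $c$ depending only on $G$, pushing all $\Omega$-dependence into the implied constant), but as written the quantifier order does not match the statement.
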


From this we obtain our main technical result.

\begin{proposition} \label{BoundedLemma}
Suppose that $G$ satisfies \TWN\ and \BD.
Then for any compact open subgroup $\K_0$ of $G(\A_{\fin})$ the collection of measures $\{ \mu^{G, S_\infty}_{K} \}$,
$K$ ranging over the open subgroups of $\K_0$, is \PB.
\end{proposition}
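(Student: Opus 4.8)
The plan is to verify polynomial boundedness of $\mathfrak{M}=\{\mu^{G,S_\infty}_K : K\subseteq\K_0\text{ open}\}$ directly from the criterion recalled in \S\ref{SectionPB}: for every finite $\types\subset\Pi(\K_\infty)$ and some $r>0$ we must dominate $f\mapsto\sup_K\abs{\mu^{G,S_\infty}_K(\hat f)}$ by a continuous seminorm on $\Hecke(G(F_\infty)^1)_{r,\types}$. Using $\mu^{G,S_\infty}_K(\hat f)=\vol(G(F)\bs G(\A)^1)^{-1}\tr R_{\disc}(f\otimes\one_K)$ and the spectral decomposition \eqref{eq: spectral decomposition}, which gives $\tr R_{\disc}(f\otimes\one_K)=J(f\otimes\one_K)-\sum_{[M]\ne[G]}J_{\spec,M}(f\otimes\one_K)$, it is enough to bound each term on the right by a seminorm of this kind, uniformly in the open subgroup $K\subseteq\K_0$. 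I would argue by induction on the $F$-semisimple rank of $G$, the inductive statement being the assertion of the proposition for every reductive $F$-group satisfying \TWN\ and \BD; note that a proper Levi subgroup of $G$ has strictly smaller $F$-semisimple rank and inherits \TWN\ and \BD\ from $G$. In the base case of semisimple rank $0$ there are no proper parabolic $F$-subgroups, the sum over $[M]\ne[G]$ is empty, \TWN\ and \BD\ hold trivially, and only the estimate for $J$ below is needed.

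For the term $J(f\otimes\one_K)$ I would use its splitting into the central contribution $\vol(G(F)\bs G(\A)^1)\sum_{z\in Z(F)}(f\otimes\one_K)(z)$ and the remainder $J_{\nc}(f\otimes\one_K)$. The central sum has a number of nonzero terms bounded only in terms of $r$, since $Z(\intgr_{S_\infty})$ is a uniform lattice in $Z(F_\infty)^1$ and the support of $f$ is contained in the fixed compact set $\K_\infty\exp(\{\norm{x}\le r\})\K_\infty$; hence it is $\ll_r\abs{f}_\infty$. For $J_{\nc}$, Theorem \ref{thm: maingeom} applied with $S=S_\infty$, $\nnn_S=\intgr_F$ and $\Omega_S=\K_\infty\exp(\{\norm{x}\le r\})\K_\infty$ gives $J_{\nc}(f\otimes\one_K)\ll_{\K_0,r}\minlevel(K)^{-\delta}\norm{f}_k\le\norm{f}_k$, uniformly in $K$. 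Thus $\abs{J(f\otimes\one_K)}\ll_{\K_0,r}\norm{f}_k$.

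The core of the argument is the treatment of $J_{\spec,M}(f\otimes\one_K)$ for each proper Levi $M$ representing a class $[M]\ne[G]$, together with a standard parabolic $P=M\ltimes U$. I would invoke Proposition \ref{cor: SpectralEstimate} with $S=S_\infty$, so that $K_S=\{1\}$ and it yields $J_{\spec,M}(f\otimes\one_K)\ll_{\types,\epsilon}\norm{f}_k\cdot\OI_{P,K}\cdot\level(K;\fctr M^+)^\epsilon$ — provided its hypothesis holds, namely that the collection $\{\mu^{M,S_\infty}_{K_M}\}$, with $K_M=\proj_M(P(\A_{\fin})\cap\gamma K\gamma^{-1})$ ranging over $\gamma\in G(\A_{\fin})$ and open $K\subseteq\K_0$, is \PB. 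To derive this from the induction hypothesis I would use two points. First, $\mu^{M,S_\infty}_{K_M}$ is unchanged when $K_M$ is replaced by an $M(\A_{\fin})$-conjugate: this is clear from the spectral formula for $\mu^{M,S_\infty}_{K_M}$, since $\vol(K_M)$ and the dimensions of spaces of $K_M$-invariant vectors in irreducible representations of $M(\A_{\fin})$ are conjugation-invariant. Second, since $P(\A_{\fin})\bs G(\A_{\fin})$ is compact, $P(\A_{\fin})\bs G(\A_{\fin})/\K_0$ is finite, say with representatives $\gamma_1,\dots,\gamma_n$; writing a general $\gamma$ as $p\gamma_i k$ with $p\in P(\A_{\fin})$, $k\in\K_0$, and using $k\K_0 k^{-1}=\K_0$, one obtains $K_M=\proj_M(p)\,L\,\proj_M(p)^{-1}$ with $L=\proj_M(P(\A_{\fin})\cap\gamma_i K'\gamma_i^{-1})$ for the open subgroup $K'=kKk^{-1}\subseteq\K_0$. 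Since $L\subseteq\proj_M(P(\A_{\fin})\cap\gamma_i\K_0\gamma_i^{-1})=:\K_0^{M,(i)}$, a fixed compact open subgroup of $M(\A_{\fin})$, the collection $\{\mu^{M,S_\infty}_{K_M}\}$ is contained in $\bigcup_{i=1}^n\{\mu^{M,S_\infty}_L:L\subseteq\K_0^{M,(i)}\text{ open}\}$, each summand \PB\ by the induction hypothesis applied to $M$. A finite union of \PB\ collections is \PB, and so is any subcollection, so the hypothesis of Proposition \ref{cor: SpectralEstimate} is met.

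Granting this, Lemma \ref{lem: mainspecorbitbnd} supplies $\OI_{P,K}\ll_{\K_0}\level(K;\fctr M^+)^{-\delta}$, and choosing $\epsilon<\delta$ gives $J_{\spec,M}(f\otimes\one_K)\ll_{\types,\K_0}\level(K;\fctr M^+)^{\epsilon-\delta}\norm{f}_k\le\norm{f}_k$, uniformly in $K$. Summing over the finitely many classes $[M]\ne[G]$ and combining with the bound for $J$ yields $\sup_K\abs{\mu^{G,S_\infty}_K(\hat f)}\ll_{\K_0,r,\types}\norm{f}_k$ on $\Hecke(G(F_\infty)^1)_{r,\types}$ (with $k$ the largest of the exponents occurring), which is the asserted polynomial boundedness. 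I expect the main obstacle to be the third paragraph above: feeding the induction hypothesis into the hypothesis of Proposition \ref{cor: SpectralEstimate}, via the conjugation-invariance of the measures $\mu^{M,S_\infty}$ and the finiteness of the double coset space $P(\A_{\fin})\bs G(\A_{\fin})/\K_0$. Everything else is a direct composition of Theorem \ref{thm: maingeom}, Proposition \ref{cor: SpectralEstimate} and Lemma \ref{lem: mainspecorbitbnd}.
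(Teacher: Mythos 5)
Your proof is correct and follows essentially the same route as the paper: induction on the semisimple rank, the spectral decomposition \eqref{eq: spectral decomposition}, Theorem \ref{thm: maingeom} for the geometric term, and Proposition \ref{cor: SpectralEstimate} combined with Lemma \ref{lem: mainspecorbitbnd} for the terms $J_{\spec,M}$. The only cosmetic difference is that the paper first reduces to $\K_0=\K_{\fin}$ via $\mu_{K_1}\le[K_1:K_2]\mu_{K_2}$ and then uses $G(\A_{\fin})=P(\A_{\fin})\K_{\fin}$, whereas you keep a general $\K_0$ and handle the finitely many double cosets $P(\A_{\fin})\bs G(\A_{\fin})/\K_0$ together with conjugation invariance of the measures $\mu^{M,S_\infty}$ --- which is exactly the argument the paper leaves implicit.
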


\begin{proof}
First note that it is enough to prove the statement for $\K_0=\K_{\fin}$.
This follows from the elementary inequality
\[
\mu_{K_1}\le [K_1:K_2]\mu_{K_2},
\]
which holds for any compact open subgroups $K_1\supset K_2$ of $G(\A_{\fin})$.

We prove the statement for $\K_0=\K_{\fin}$ by induction on the semisimple rank of $G$.
The base of the induction is \cite[Lemma 7.6]{FLM3}.
We recall that properties \TWN\ and \BD\ are hereditary for Levi subgroups.
For the induction step, we can therefore assume that for any proper Levi subgroup $M$ of $G$ the collection of measures
$\{ \mu^{M, S_\infty}_{K} \}$, $K$ ranging over the open subgroups of $\K_{M,\fin} = \K \cap M (\A_{\fin})$, is \PB.

Fix $r > 0$ and apply the trace formula to $h \otimes \one_{K}$, where $h \in \Hecke (G(F_\infty)^1)_{r,\types}$.
By \eqref{eq: spectral decomposition} we have
\[
\vol(G(F)\bs G(\A)^1) \mu^{G, S_\infty}_{K} (\hat{h}) = J (h \otimes \one_{K}) - \sum_{[M], \, M \neq G} J_{\spec,M} (h \otimes \one_{K}).
\]
By Theorem \ref{thm: maingeom} (with $S=S_\infty$), $\sup_K \abs{J (h \otimes \one_{K})}$ 
is a continuous seminorm on the space $\Hecke (G(F_\infty)^1)_{r,\types}$.
On the other hand, applying Proposition \ref{cor: SpectralEstimate} with $S=S_\infty$ (taking into account the induction hypothesis
and the fact that $G(\A_{\fin})=P(\A_{\fin})\K_{\fin}$) and Lemma \ref{lem: mainspecorbitbnd},
we infer that every spectral term $\sup_K\abs{J_{\spec,M} (h \otimes \one_{K})}$ is also a continuous seminorm on $\Hecke (G(F_\infty)^1)_{r,\types}$.
We conclude that the collection $\{ \mu^{G, S_\infty}_{K}\}$ is \PB.
\end{proof}

\begin{remark} \label{RemarkIndependencePB}
We expect that even the collection $\{ \mu^{G,S_\infty}_K \}_{K \in \comfam}$, where $\comfam$ is the set of \emph{all} compact open subgroups
of $G (\A_{\fin})$, is polynomially bounded. As already mentioned in the introduction, for general $G$ the estimates on the geometric and spectral sides
contained in this paper are not sufficient to show this. However, for the groups $G = \GL (n)$ and $\SL (n)$ it is easy to see that the general boundedness statement
is true, since for $G=\GL(n)$ all maximal compact subgroups of $G (\A_{\fin})$ are conjugate, and for $G=\SL(n)$ they fall into finitely many classes under
the action of $\GL (n, F) \SL (n, \A_{\fin})$.
\end{remark}

As before, let $S$ be a finite set of places containing $S_\infty$.

\begin{corollary}[Spectral limit property] \label{corspectrallimit}
Suppose that $G$ satisfies properties \TWN\ and \BD.
Let $\fixco$ be a compact open subgroup of $G (\A^S)$.
Then the spectral limit property holds with respect to any non-degenerate family $\comfam$ of open subgroups of $\fixco$.
\end{corollary}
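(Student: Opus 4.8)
The plan is to read off the spectral limit property \eqref{eq: mainspectral} from the spectral decomposition \eqref{eq: spectral decomposition}. Since $J_{\spec,G}(h) = \tr R_{\disc}(h)$, for any $h \in \Hecke(G(F_S)^1)$ and any open subgroup $K$ of $\fixco$ we have
\[
J(h \otimes \one_K) - \tr R_{\disc}(h \otimes \one_K) = \sum_{[M], \, M \neq G} J_{\spec,M}(h \otimes \one_K),
\]
a \emph{finite} sum over the conjugacy classes of proper Levi subgroups $M \supset M_0$ of $G$. It thus suffices to show, for each such $M$, that $J_{\spec,M}(h \otimes \one_K) \to 0$, $K \in \comfam$; and since $\comfam$ is non-degenerate, i.e., $\minlevel(K) \to \infty$, it is enough to produce a bound of the form $J_{\spec,M}(h \otimes \one_K) \ll_h \minlevel(K)^{-\delta'}$ with $\delta' > 0$.

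Fix a proper standard Levi $M$ with an associated parabolic $P = M \ltimes U$. The first point is to check that the polynomial boundedness hypothesis of Proposition \ref{cor: SpectralEstimate} holds for $M$. As properties \TWN\ and \BD\ are hereditary for Levi subgroups, $M$ satisfies them, so by Proposition \ref{BoundedLemma} the collection $\{ \mu^{M,S_\infty}_{L} \}$ is \PB\ whenever $L$ runs over the open subgroups of a fixed compact open subgroup of $M(\A_{\fin})$. Now $P(\A_{\fin}) \bs G(\A_{\fin}) / \K_0$, with $\K_0 = \K_{S_{\fin}} \fixco$, is finite (as $P(\A_{\fin}) \bs G(\A_{\fin})$ is compact), $\proj_M$ sends compact open subgroups of $P(\A_{\fin})$ to compact open subgroups of $M(\A_{\fin})$, and $\mu^{M,S_\infty}_{K_M}$ depends only on the $M(\A_{\fin})$-conjugacy class of $K_M$; hence every $K_M = \proj_M(P(\A_{\fin}) \cap \gamma \tilde K \gamma^{-1})$ with $\tilde K$ open in $\K_0$ and $\gamma \in G(\A_{\fin})$ is $M(\A_{\fin})$-conjugate to an open subgroup of one of finitely many fixed compact open subgroups of $M(\A_{\fin})$, so the collection appearing in Proposition \ref{cor: SpectralEstimate} is contained in a finite union of \PB\ collections and is therefore \PB. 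Given $h$, choose a finite set $\types \subset \Pi(\K_\infty)$ and an open subgroup $K_S$ of $\K_{S_{\fin}}$ with $h \in \Hecke(G(F_S)^1)_{\types,K_S}$ (possible since $h$ is smooth, compactly supported and $\K_S$-finite). Proposition \ref{cor: SpectralEstimate} then provides $k \ge 0$ such that for all $\epsilon > 0$ and all open $K \subseteq \fixco$
\[
J_{\spec,M}(h \otimes \one_K) \ll_{\types,\epsilon} \vol(K_S)^{-1} \norm{h}_k \cdot \OI_{P,K_S K} \cdot \level(K_S K;\fctr M^+)^\epsilon ;
\]
combining this with Lemma \ref{lem: mainspecorbitbnd} (applied to $\K_0$, noting that $K_S K$ is an open subgroup of $\K_0$) and taking $0 < \epsilon < \delta$ with $\delta$ as in that lemma yields $J_{\spec,M}(h \otimes \one_K) \ll_h \level(K_S K;\fctr M^+)^{\epsilon - \delta}$.

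To finish I would compare $\level(K_S K;\fctr M^+)$ with $\minlevel(K)$. Because $M$ is a proper Levi, $\fctr M$ is a non-central normal $F$-subgroup of $G$ and therefore contains an $F$-simple normal subgroup $H$ of $G$ with $\fctr M^+ \supseteq H^+$, so $\level(K_S K;\fctr M^+) \ge \level(K_S K;H^+)$. Moreover $K = (K_S K) \cap G(\A^S)$; hence if $\K^{S_\infty}(\nnn) \cap H(\A_{\fin})^+ \subseteq K_S K$ then, intersecting with $G(\A^S)$, we get $\K^S(\nnn') \cap H(\A^S)^+ \subseteq K$, where $\nnn'$ is the part of $\nnn$ coprime to $S$, so that $\level(K;H^+) \le \inorm(\nnn') \le \inorm(\nnn)$. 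Taking $\nnn$ optimal for the left-hand side gives $\minlevel(K) \le \level(K;H^+) \le \level(K_S K;H^+) \le \level(K_S K;\fctr M^+)$. Therefore $J_{\spec,M}(h \otimes \one_K) \ll_h \minlevel(K)^{\epsilon - \delta} \to 0$ for each proper Levi class, and summing over the finitely many such classes gives \eqref{eq: mainspectral}, i.e., the spectral limit property. I expect the only genuinely delicate points to be, first, the verification in the second paragraph that the subgroups $K_M$ of Proposition \ref{cor: SpectralEstimate} fall within the scope of Proposition \ref{BoundedLemma} (requiring the finiteness of the double coset space and the conjugation invariance of $\mu^{M,S_\infty}_{K_M}$), and second, the level comparison just indicated; the remainder is an assembly of results already established.
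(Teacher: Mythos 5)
Your proof is correct and follows essentially the same route as the paper: reduce via the spectral decomposition to the terms $J_{\spec,M}$, verify the polynomial boundedness hypothesis of Proposition \ref{cor: SpectralEstimate} by applying Proposition \ref{BoundedLemma} to the Levi $M$ together with the finiteness of $P(\A_{\fin})\backslash G(\A_{\fin})/\K_{S_{\fin}}\fixco$, and then combine with Lemma \ref{lem: mainspecorbitbnd} and a comparison of $\level(K_SK;\fctr M^+)$ with $\minlevel(K)$. The two points you flag as delicate (conjugation invariance of $\mu^{M,S_\infty}_{K_M}$ and the level comparison) are exactly the details the paper leaves implicit, and your treatment of them is sound.
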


\begin{proof}
By Proposition \ref{BoundedLemma}, for every Levi subgroup $M$ and any fixed compact open subgroup $\fixco \subset G (\A^S)$ the collection of the
measures $\{ \mu^{M, S_\infty}_{K_M} \}$ for $K_M = \proj_M (P (\A_{\fin}) \cap \gamma K \gamma^{-1})$, $K$ ranging over the open subgroups
of $\K_{S_{\fin}} \fixco$, $\gamma \in G (\A_{\fin})$, is \PB. Indeed, we may restrict here $\gamma$ to a set of representatives for the finitely
many double cosets of $P (\A_{\fin}) \backslash G (\A_{\fin}) / \K_{S_{\fin}} \fixco$.

Therefore, as in the proof of Proposition \ref{BoundedLemma}, we can apply
Proposition \ref{cor: SpectralEstimate} and Lemma \ref{lem: mainspecorbitbnd}
to conclude that for any $M \neq G$ and $h \in \Hecke (G(F_S)^1)$ we have
\[
J_{\spec,M} (h \otimes \one_{K}) \to 0
\]
for a collection $\comfam$ of open subgroups of $\fixco$,
provided that $\level(K;\fctr M^+)\rightarrow\infty$, $K \in \comfam$.
Thus, if $\minlevel(K)\rightarrow\infty$, $K \in \comfam$, then we have by \eqref{eq: spectral decomposition}:
\[
J (h \otimes \one_{K}) - \tr R_{\disc} (h \otimes \one_{K}) \to 0,
\]
which is the spectral limit property.
\end{proof}

The main result of the paper, Theorem \ref{MainTheorem}, now follows from
Theorem \ref{thm: maingeom}, Corollary \ref{corspectrallimit}, and the discussion in \S \ref{SectionStrategy}.


\newcommand{\etalchar}[1]{$^{#1}$}
\providecommand{\bysame}{\leavevmode\hbox to3em{\hrulefill}\thinspace}
\providecommand{\MR}{\relax\ifhmode\unskip\space\fi MR }
\providecommand{\MRhref}[2]{%
  \href{http://www.ams.org/mathscinet-getitem?mr=#1}{#2}
}
\providecommand{\href}[2]{#2}

\end{document}